 \newtheorem{thm}{Theorem}[section]
 \newtheorem{cor}[thm]{Corollary}
  \newtheorem{con}[thm]{Conjecture}
 \newtheorem{lem}[thm]{Lemma}
 \newtheorem{prop}[thm]{Proposition}
 \theoremstyle{definition}
 \theoremstyle{remark}
 \newtheorem{rem}[thm]{Remark}
 \newtheorem*{ack}{Acknowledgments}
 \theoremstyle{claim}
 \numberwithin{equation}{section}
 \newcommand{\RR}{{\mathbb R}}
\begin{document}
\title[Volume preserving flow by $k$th mean curvature]
{Volume preserving flow by powers of $k$-th mean curvature}

\author[B. Andrews]{Ben Andrews}
\author[Y. Wei]{Yong Wei}
\address{Mathematical Sciences Institute,
Australia National University,
ACT 2601 Australia}
\email{\href{mailto:ben.andrews@anu.edu.au}{ben.andrews@anu.edu.au}, \href{mailto:yong.wei@anu.edu.au}{yong.wei@anu.edu.au}}

\date{\today}
\thanks {This research was supported by Laureate Fellowship FL150100126 of the Australian Research Council.}
\keywords {Mixed volume, convex body, curvature measure.}
\subjclass[2010]{53C44; 52A39}


\begin{abstract}
We consider the flow of closed convex hypersurfaces in Euclidean space $\mathbb{R}^{n+1}$ with speed given by a power of the $k$-th mean curvature $E_k$ plus a global term chosen to impose a constraint involving the enclosed volume $V_{n+1}$ and the mixed volume $V_{n+1-k}$ of the evolving hypersurface.
We prove that if the initial hypersurface is strictly convex, then the solution of the flow exists for all time and converges to a round sphere smoothly.  No curvature pinching assumption is required on the initial hypersurface.
\end{abstract}

\maketitle

\section{Introduction}
Let $X_0: M^n\to \mathbb{R}^{n+1}$ be a smooth embedding such that $M_0=X_0(M)=\partial\Omega_0$ is a closed strictly convex hypersurface in $\mathbb{R}^{n+1}$. We consider the smooth family of immersions $X:M^n\times [0,T)\rightarrow \mathbb{R}^{n+1}$ satisfying
\begin{equation}\label{flow-VMCF}
 \left\{\begin{aligned}
 \frac{\partial}{\partial t}X(x,t)=&~(\phi(t)-E_k^{{\alpha}/k}(x,t))\nu(x,t),\\
 X(\cdot,0)=&~X_0(\cdot),
  \end{aligned}\right.
 \end{equation}
where $\alpha>0$, $\nu(x,t)$ is the outward unit normal of the hypersurface $M_t=X(M,t)=\partial\Omega_t$, $k\in\{1,\cdots,n\}$ and $E_k$ is the $k$-th mean curvature of  $M_t$ defined as the normalized $k$-th elementary symmetric functions of the principal curvatures $(\kappa_1,\cdots,\kappa_n)$ of $M_t$:
 \begin{equation}\label{eq:defEk}
   E_{k}={\binom{n}{k}}^{-1}\sum_{1\leq i_1<\cdots<i_k\leq n}\kappa_{i_1}\cdots \kappa_{i_k}.
 \end{equation}
Clearly, $E_1=H/n$ and $E_n=K$ are the mean curvature and Gauss curvature of $M_t$ respectively. We also set $E_0=1$. The global term $\phi(t)$ in the flow \eqref{flow-VMCF} will be chosen to preserve a constraint involving the enclosed volume $V_{n+1}=(n+1)|\Omega_t|$ and the mixed volume $V_{n+1-k}(\Omega_t)$.  To describe this constraint precisely, we first briefly recall the mixed volumes of convex bodies:

Let $\Omega_1,\cdots,\Omega_{n+1}$ be convex bodies in $\mathbb{R}^{n+1}$. The mixed volumes are defined as the coefficients of the volume of the Minkowski sum $\sum_{i=1}^{n+1}\epsilon_i\Omega_i$:
\begin{equation*}
  V[\Omega_1,\cdots,\Omega_{n+1}]=(n+1)\frac{\partial^{n+1}}{\partial\epsilon_1\cdots\partial\epsilon_{n+1}}\mathrm{Vol}\left(\sum_{i=1}^{n+1}\epsilon_i\Omega_i\right).
\end{equation*}
In particular, the mixed volume (or quermassintegral) $V_{j}(\Omega)$ of a convex body $\Omega$ in $\mathbb{R}^{n+1}$ is defined as the following mixed volume of $\Omega$ with the unit ball $B$:
\begin{equation}\label{s1:MixedV}
  V_{j}(\Omega)=V[\underbrace{\Omega,\cdots,\Omega}_{j},\underbrace{B,\cdots,B}_{n+1-j}].
\end{equation}
In particular, $V_{n+1}(\Omega)=(n+1)\mathrm{Vol}(\Omega)$ and $V_0(\Omega)=(n+1)\mathrm{Vol}(B)=\omega_n$, where $\omega_n$ denotes the area of the unit sphere $\mathbb{S}^n$. If $\partial\Omega$ is $C^2$, the mixed volumes of $\Omega$ are related to the integral of $k$-th mean curvature over the boundary of $\Omega$ by the formula
\begin{equation}\label{def-quermass}
   V_{n-j}(\Omega)=\int_{\partial\Omega}E_{j}d\mu,\quad j=0,1,\cdots,n.
\end{equation}

For convenience we define the $j$-radius $r_j(\Omega)$ of $\Omega$ for $j=1,\cdots,n+1$ by
$$
r_j(\Omega) = \left(\frac{V_j(\Omega)}{V_j(B)}\right)^{\frac1j}=\omega_n^{-1/j}V_j(\Omega)^{1/j}.
$$
The $j$-radius is therefore the radius of the ball with the same value of $V_j$ as $\Omega$.  The Alexandrov-Fenchel inequalities \eqref{AF-k=n+1} imply that $r_j(\Omega)$ is non-increasing in $j$.

Now we consider a general smooth function $G:\ \{(a,b):\ a\geq b>0\}\to\mathbb{R}_+$ which is non-decreasing in each argument and has non-vanishing derivative at each point.  We choose the global term $\phi(t)$ in the flow \eqref{flow-VMCF} to keep the function $G(r_{n+1-k}(\Omega_t),r_{n+1}(\Omega_t))$ constant in $t$.  Explicitly, the choice of $\phi(t)$ is given by
\begin{align}\label{s1:phi-3}
  \phi(t) =& \frac{G_a\omega_n^{\frac1{n+1}}V_{n+1}^{\frac{n}{n+1}}\int_{M_t}E_k^{1+\frac{\alpha}{k}}
  +G_b\omega_n^{\frac{1}{n+1-k}}V_{n+1-k}^{\frac{n-k}{n+1-k}}\int_{M_t}E_k^{\frac{\alpha}{k}}}
  {G_a\omega_n^{\frac1{n+1}}V_{n+1}^{\frac{n}{n+1}}V_{n-k}
  +G_b\omega_n^{\frac{1}{n+1-k}}V_{n+1-k}^{\frac{n-k}{n+1-k}}V_n},
\end{align}
where $G_a$ and $G_b$ are the partial derivatives of $G$ with respect to the first and second variables respectively.

The main result that we prove in this paper is the following.
\begin{thm}\label{thm1-1}
Fix $k\in \{1,\cdots,n\}$ and $\alpha>0$.  Then for any smooth embedding $X_0: M^n\to \mathbb{R}^{n+1}$ such that $M_0=X_0(M)=\partial\Omega_0$ is a closed strictly convex hypersurface in $\mathbb{R}^{n+1}$, the flow \eqref{flow-VMCF} with global term $\phi(t)$ given by \eqref{s1:phi-3} has a smooth strictly convex solution $M_t=\partial\Omega_t$ for all time $t\in [0,\infty)$, and $M_t$ converges smoothly as $t\to\infty$ to a sphere $S^n(\bar{r})$ of radius $\bar{r}$ determined by $G(\bar r,\bar r) = G(r_{n+1-k}(\Omega_0),r_{n+1}(\Omega_0))$.
\end{thm}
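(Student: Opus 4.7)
The plan follows the standard program for constrained nonlinear curvature flows on convex hypersurfaces: establish short-time existence and that \eqref{s1:phi-3} preserves the $G$-constraint; preserve convexity and derive $C^0$ bounds; construct a monotone Lyapunov functional and use it to control the curvatures; and then deduce long-time existence and smooth convergence. Since $E_k^{\alpha/k}$ is smooth, monotone and well-defined on the positive cone of Weingarten matrices, \eqref{flow-VMCF} is parabolic at any strictly convex hypersurface, so standard theory gives a unique smooth short-time solution for any admissible $\phi(t)$. The form of \eqref{s1:phi-3} is then forced by the constraint: using $\frac{d}{dt}V_{n+1}(\Omega_t)=(n+1)\int_{M_t}F\,d\mu$ and $\frac{d}{dt}V_{n+1-k}(\Omega_t)=(n+1-k)\int_{M_t}E_k F\,d\mu$ with $F=\phi(t)-E_k^{\alpha/k}$, the condition $G_a\dot r_{n+1-k}+G_b\dot r_{n+1}=0$ is a linear equation in $\phi$ whose unique solution is exactly \eqref{s1:phi-3}.

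Strict convexity is preserved by the tensor maximum principle for the evolution of $h^i_j$: the nonlinear $-E_k^{\alpha/k}$ term is handled by the standard structural results for $E_k$-flows on convex hypersurfaces, and the global term contributes only a bounded zero-order piece. For the $C^0$ bounds, the Alexandrov-Fenchel chain $r_{n+1}\leq r_{n+1-k}$ together with preservation of $G$ and the non-vanishing of $G_a,G_b$ pin both radii between two positive constants; translating $M_t$ so that its Steiner point lies at the origin, convexity then yields uniform two-sided bounds on the inradius and circumradius of $\Omega_t$.

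The heart of the argument is the Lyapunov functional together with the resulting curvature bounds. Substituting \eqref{s1:phi-3} into the evolution of a suitable scale-invariant quermassintegral combination (for instance $V_{n+1-k}^{n+1}/V_{n+1}^{n+1-k}$ in the canonical case $G=b$, or a corresponding $G$-adapted interpolant in general) and applying Cauchy-Schwarz together with the Alexandrov-Fenchel inequalities produces a functional that is monotone along the flow and bounded below by its value on a round sphere, with equality only on spheres. A Tso-type auxiliary function, such as $E_k^{\alpha/k}/(u-c)$ with $u$ the support function based at an interior point and $c$ less than the inradius, should then yield a uniform upper bound on the speed, hence on the principal curvatures via the preserved convexity; a separate maximum-principle argument using convexity and the $C^0$ data provides a uniform positive lower bound on $E_k$. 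With uniform two-sided curvature bounds, \eqref{flow-VMCF} is uniformly parabolic, and Krylov-Safonov, Evans-Krylov and Schauder bootstrapping deliver uniform $C^{k,\alpha}$ estimates on any time interval, hence long-time existence.

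Finally, the monotone Lyapunov functional has a finite limit, so its time derivative is integrable; along a sequence $t_i\to\infty$ the uniform estimates produce smooth convergence to a limit on which the Alexandrov-Fenchel inequality is saturated, which must therefore be a round sphere $S^n(\bar r)$ with $\bar r$ determined by $G(\bar r,\bar r)=G(r_{n+1-k}(\Omega_0),r_{n+1}(\Omega_0))$. Linearizing \eqref{flow-VMCF} around $S^n(\bar r)$ and using the strict negativity of the linearized operator modulo translations upgrades this subsequential convergence to smooth exponential convergence of the full flow. The main technical obstacle I expect is the upper curvature bound: because no pinching is assumed and $\phi(t)$ in \eqref{s1:phi-3} has no a priori sign, the Tso-type auxiliary function must be chosen carefully to absorb the global term and to exploit the $C^0$ and monotonicity information already established.
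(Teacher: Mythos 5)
Much of your outline tracks the paper's architecture: the derivation of \eqref{s1:phi-3} from the constraint, the Gauss-map short-time existence, preservation of convexity via the evolution of $\tau_{ij}$, the monotonicity of the isoperimetric ratio (the paper proves that $V_{n+1}$ is non-decreasing and $V_{n+1-k}$ is non-increasing for \emph{any} admissible $G$, via Jensen's inequality, so the Lyapunov functional is ${\mathcal I}_{n+1-k}$ in all cases rather than a $G$-adapted interpolant), the $C^0$ bounds, and the Tso-type upper bound on $E_k$ with auxiliary function $E_k^{\alpha/k}/(u-c)$. The step you flag as the main technical obstacle (the upper speed bound) is in fact the one that works exactly as you describe.

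The genuine gap is the lower bound on $E_k$, which you dispatch with ``a separate maximum-principle argument using convexity and the $C^0$ data.'' No such direct argument is available when $k>1$ and no pinching is assumed. The known lower speed bounds (Smoczyk's barrier estimate, the Bertini--Sinestrari maximum-principle argument) all require either a curvature ratio bound or quantitative proximity to a sphere, neither of which you possess at that stage of your argument: the Tso bound and the $C^0$ data alone do not rule out the principal curvatures degenerating. Without the lower bound on $E_k$, the flow is not uniformly parabolic, so the Krylov--Safonov / Evans--Krylov bootstrapping you invoke to produce a smooth subsequential limit cannot begin. This makes the final step of your proposal circular: the ``uniform estimates'' producing a smooth limit presuppose two-sided curvature control, and the lower bound needed for that control in turn presupposes proximity to a sphere.

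The paper breaks this circle by establishing the Hausdorff convergence to a ball \emph{before} any lower speed bound, using tools from the theory of convex bodies rather than PDE estimates. The $C^0$ bounds give precompactness of $\{\Omega_t\}$ in Hausdorff distance via the Blaschke selection theorem, producing a limiting convex body $\hat\Omega$. The vanishing of the dissipation of ${\mathcal I}_{n+1-k}$ along a subsequence, combined with the weak continuity of curvature measures, shows that the curvature measures of $\hat\Omega$ satisfy ${\mathcal C}_{n-k}(\hat\Omega,\cdot)=c\,{\mathcal C}_n(\hat\Omega,\cdot)$, and a theorem of Schneider (a measure-theoretic generalization of the Alexandrov soap bubble theorem, valid with no regularity assumption on $\partial\hat\Omega$) then forces $\hat\Omega$ to be a ball. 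Only after this Hausdorff convergence is in hand does the Smoczyk-type argument for the lower bound on $E_k$ close, because it uses spherical barriers whose effectiveness requires the outer and inner radii to be nearly equal. Your proposal is missing this entire mechanism; without it, the $k>1$ case of the theorem cannot be completed along the route you sketch.
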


\begin{rem}
The choices $G(a,b)=b$ and $G(a,b)=a$ correspond to the flows which preserve the enclosed volume $V_{n+1}(\Omega_t)$ or the mixed volume $V_{n+1-k}(\Omega_t)$ respectively, so Theorem \ref{thm1-1} contains the result for these flows as special cases.
\end{rem}

Certain cases of this result have been proved previously, as well as several other related results:  The first case treated was the volume-preserving mean curvature flow ($k=1$, $\alpha=1$ and $G(a,b)=b$) which was considered by Huisken \cite{huis-87}.  The crucial estimate was a curvature ratio bound, and the argument was adapted from that used previously for the mean curvature flow \cite{huis-1984}.
Similar methods apply to the area-preserving mean curvature flow ($k=1$, $\alpha=1$ and $G(a,b)=a$), as shown by McCoy \cite{McC2003}.  In fact the argument using pointwise curvature estimates holds very generally for flows with $\alpha=1$, allowing the treatment of mean curvature flows preserving other mixed volumes \cite{McC2004} and also flows by homogeneous degree one curvature functions in a large class \cite{McC2005,McC2017} with a constraint on any of the mixed volumes $V_j(\Omega_t)$, despite the fact that there is no variational structure and no monotone isoperimetric ratios known for such flows.

Several works have treated flows with $\alpha>1$ by following the same argument using curvature ratio bounds:
For contraction flows (with $\phi(t)=0$) it was observed that for $\alpha>1$, sufficiently strong curvature ratio bounds are preserved \cite{Chow85,Schu06,Aless-Sin,And-McC2012}.  This is also true for the constrained flows with $\alpha>1$, allowing such flows to be understood provided one can overcome the degeneracy which arises when the speed becomes small:  If one can show that solutions remain smooth, with uniform estimates as time approaches infinity, then the curvature pinching estimate implies that the limit must be a sphere.  Several techniques have been used to handle this degeneracy:  For constrained flows by powers of $E_k$, Cabezas-Rivas and Sinestrari \cite{Cab-Sin2010} observed that (once the curvature ratio bound is established) the equation equation for $E_k$ has the structure of a porous medium equation, and in particular estimates for porous medium equations \cite{DiB-F} imply that $E_k$ is H\"older continuous (estimates of this kind had been employed previously for contraction flows by powers of mean curvature by Schulze \cite{Schu06} and for powers of scalar curvature by Alesssandroni and Sinestrari \cite{Aless-Sin}).  This allows the proof of convergence to a sphere to be completed.  McCoy \cite[Section 6]{McC2017} also showed that with sufficiently strong curvature ratio bounds on the initial hypersurface, the convergence result can be established for $\alpha>1$ for a much wider class of speeds, by using spherical barriers and an adaptation of an estimate of Smoczyk \cite{Smo98} to derive a lower speed bound (here the argument uses the fact that hypersurfaces with a strong curvature ratio bound are close to spheres, in order to make the spherical barriers effective).  This removes the need for porous medium estimates, but requires stronger curvature pinching assumptions.

It seems to be true, however, that curvature pinching estimates are less decisive for constrained flows than they are for the corresponding contraction flows:  The first author treated anisotropic analogues of the volume-preserving mean curvature flow \cite{And2001-Aniso}, and proved that solutions converge to the Wulff shape corresponding to the anisotropy, despite the fact that no curvature pinching estimate could be obtained.  Instead, the convergence argument was based on an improving isoperimetric ratio (and the fact that a bound on isoperimetric ratio for a convex hypersurface implies bounds on the ratio of diameter to inradius).   The improving isoperimetric ratio was also used by Sinestrari \cite{Sin-2015} to prove the convergence of the volume-preserving or area-preserving flows by powers of mean curvature (this corresponds to the case $k=1$ of Theorem \ref{thm1-1} with $G(a,b)=b$ or $G(a,b)=a$).  In that paper the result holds for arbitrary powers $\alpha>0$, and no initial pinching condition is required.  Instead the isoperimetric bounds are used to deduce bounds above on diameter and below on inradius, and the porous medium estimates of \cite{DiB-F} are applied to give H\"older continuity of the mean curvature.  From this it is possible to deduce that the solution remains regular and converges to a smooth limit which has constant mean curvature and is therefore a sphere.

More recently, Bertini and Sinestrari \cite{Bert-Sin} have considered flows by very general non-homogeneous increasing functions of the mean curvature, with a constraint on the enclosed volume.  Again, the isoperimetric ratio bound plays a crucial role in controlling the geometry, but the authors also derive a lower bound on the speed directly from the maximum principle, making the remaining analysis much easier and in particular removing the need to use porous medium estimates.

Our argument to prove Theorem \ref{thm1-1} exploits the isoperimetric bounds to control the geometry of the evolving hypersurfaces:  For any $k\in\{1,\cdots,n\}$ and any $\alpha>0$, we show that $V_{n+1-k}(\Omega_t)$ is non-increasing, while $V_{n+1}(\Omega_t)$ is non-decreasing.  This implies a time-independent bound on diameter and a time-independent lower bound on inradius, and these allow us to derive an upper bound on $E_k$ using the method of Tso \cite{Tso85}.  This is sufficient for us to deduce that the solution exists and remains smooth and strictly convex for all positive times.
However, for $k>1$ new ideas are needed to prove convergence to a sphere:  Without a curvature ratio bound, the flow can no longer be written as a porous medium equation with uniformly elliptic coefficients, and so this route to regularity cannot be used.  The lower speed bound of Bertini and Sinestrari is also not available without some kind of curvature pinching control.  Instead, we use some machinery from the theory of convex bodies:   In particular, we use the Blaschke selection theorem to show that the enclosed regions $\Omega_t$ of $M_t$ converge in Hausdorff distance (for a subsequence of times) to a limiting convex region $\hat\Omega$ as $t\to\infty$.  We then deduce from the evolution of $V_{n+1-k}$ that $\hat\Omega$ satisfies
\begin{equation*}
  \mathcal{C}_{n-k}(\hat{\Omega},\beta)=~c~\mathcal{C}_n(\hat{\Omega},\beta)
\end{equation*}
for any Borel set $\beta$ in $\mathbb{R}^{n+1}$, where ${\mathcal C}_{n-k}$ and ${\mathcal C}_n$ are the curvature measures of $\hat\Omega$, and $c$ is a constant.  A theorem of Schneider \cite{Schn79} (a generalization of the classical Alexandrov Theorem) can be used to conclude that $\hat{\Omega}$ is a ball. Using the monotonicity of the isoperimetric ratio again, the Hausdorff convergence of the whole family $\Omega_t$ to a ball follows easily.  With the help of the Hausdorff convergence, we can adapt an idea of Smoczyk \cite[Proposition 4]{Smo98} (see also \cite{Andrews-McCoy-Zheng,McC2017}) to prove a uniform positive lower bound on $E_k$. Then the smooth convergence of the flow follows by a standard argument.

In the end of this paper, we discuss several generalisations of Theorem \ref{thm1-1}:

First, we consider generalisations in which the driving speed is a non-homogeneous function $\mu(E_k^{\frac1k})$.  The flows is question then have the form
\begin{equation}\label{eq:flow-nonhom}
 \left\{\begin{aligned}
 \frac{\partial}{\partial t}X(x,t)=&\left(\phi(t)-\mu\left(E_k^{1/k}(x,t)\right)\right)\nu(x,t),\\
 X(\cdot,0)=&~X_0(\cdot),
  \end{aligned}\right.
 \end{equation}
 where $\mu$ is smooth and positive with positive derivative, and satisfies some structural assumptions near zero and near infinity.  Such flows were treated in the case $k=1$ by Bertini and Sinestrari \cite{Bert-Sin}, and the methods of this paper allow us to produce a
similarly general result for all $k$.

  Second, we generalise the constrained flows considered previously by considering flows in which the enclosed volume is monotone:  That is, in the flow \eqref{flow-VMCF} (or non-homogeneous generalisations of the form \eqref{eq:flow-nonhom}) we require the global term $\phi(t)$ to be smooth and satisfy
\begin{equation*}
  \phi(t)\geq \frac 1{V_n(\Omega_t)}\int_{M_t}\mu(E_k^{1/k})d\mu_t.
\end{equation*}
Under this assumption (and some further asymptotic assumptions if $\mu$ is not homogeneous) we can also show that the flow \eqref{flow-VMCF} has a smooth strictly convex solution $M_t$ for all time $t\in [0,\infty)$, and either
\begin{itemize}
  \item[(i)] the volume of $\Omega_t$ is uniformly bounded above and $M_t$ converges smoothly as $t\to\infty$ to a sphere $S^n_{\bar{r}}(p)$; or
  \item[(ii)] the volume of $\Omega_t$ goes to infinity and $M_t$ is asymptotic to an expanding sphere with radius depending on $\phi(t)$.
\end{itemize}
The monotonicity of an isoperimetric ratio is a key ingredient in proving the above result.

Third, we briefly discuss anisotropic generalisations of the flows.  We show that anisotropic analogues of the results of Bertini and Sinestrari \cite{Bert-Sin} hold (corresponding to $k=1$), and also provide correspondingly strong results for $k=n$ by making use of some stability results for inequalities between mixed volumes.  We discuss the corresponding problem for $1<k<n$, and identify an natural conjecture concerning hypersurfaces satisfying relations between the corresponding anisotropic curvature measures which would allow the more general anisotropic results to be proved.

Finally, we observe that the results for functions of mean curvature can be generalised to volume-preserving flows involving much larger classes of flows involving uniformly monotone functions of principal curvatures.  In these cases we no longer have a monotone isoperimetric inequality, but we can deduce diameter bounds from an Alexandrov reflection argument, and then derive inradius lower bounds from the preservation of enclosed volume.  In these cases the lower speed bound of Bertini and Sinestrari \cite{Bert-Sin} applies, so we can deduce smooth convergence to a limiting hypersurface along a subsequence of times approaching infinity.  The convergence to a sphere then follows from a strong maximum principle applied in the Alexandrov reflection argument.

See \S \ref{sec:8} for the detailed discussion of these generalisations.

\begin{ack}
The authors would like to thank James McCoy for discussions.
\end{ack}

\section{Preliminaries}
In this section, we collect some evolution equations along the flow \eqref{flow-VMCF} and preliminary results on convex bodies.
\subsection{Evolution equations}
Along the flow \eqref{flow-VMCF}, we have the following evolution equations on the induced metric $g_{ij}$, unit outward normal $\nu$, the induced area element $d\mu_t$ and $l$-th mean curvature of $M_t$:
\begin{align}
  \frac{\partial}{\partial t}g_{ij} =&~ 2(\phi(t)-E_k^{{\alpha}/k})h_{ij} \label{evl-g}\displaybreak[0]\\
  \frac{\partial}{\partial t}\nu=&~\nabla E_k^{\alpha/k}\label{evl-nu}\displaybreak[0]\\
   \frac{\partial}{\partial t}d\mu_t=& ~nE_1 (\phi(t)-E_k^{{\alpha}/k})d\mu_t\label{evl-dmu}\displaybreak[0]\\
    \frac{\partial}{\partial t}E_l=&~\frac{\partial E_l}{\partial h_{ij}}\nabla_j\nabla_iE_k^{{\alpha}/k}-(\phi(t)-E_k^{{\alpha}/k})(nE_1E_l-(n-l)E_{l+1}), \label{evl-El}
\end{align}
where $l=1,\cdots,n-1$, and $\nabla$ denotes the Levi-Civita connection with respect to the induced metric $g_{ij}$ on $M_t$. The proof is by similar calculations as in \cite{huis-1984}.

\subsection{Gauss map parametrisation of the flow}\label{sec:2-3}
The convex hypersurfaces can be parametrised via the Gauss map. Given a smooth strictly convex hypersurface $M$ in $\mathbb{R}^{n+1}$, the support function $s: \mathbb{S}^n\to\mathbb{R}$ of $M$ is defined by $s(z)=\sup\{\langle x,z\rangle:x\in\Omega\}$, where $\Omega$ is the convex body enclosed by $M$. Then the hypersurface $M$ is given by the embedding
\begin{equation*}
  X(z)=s(z)z+\bar{\nabla}s(z),
\end{equation*}
where $\bar{\nabla}$ is the gradient with respect to the round metric $\bar{g}_{ij}$ on $\mathbb{S}^n$. The principal radii of curvature, or the inverses of the principal curvatures, are the eigenvalues of
\begin{equation*}
  \tau_{ij}=\bar{\nabla}_i\bar{\nabla}_js+\bar{g}_{ij}s
\end{equation*}
 with respect to $\bar{g}_{ij}$.

Denote $F=E_k^{1/k}$ and define the function $F_*$ by
\begin{equation*}
  F_*(x_1,\cdots,x_n)=F(x_1^{-1},\cdots,x_n^{-1})^{-1},
\end{equation*}
which is concave in its argument. The solution of the flow \eqref{flow-VMCF} is then given, up to a time-dependent diffeomorphism,  by solving the scalar parabolic equation on $\mathbb{S}^n$
 \begin{equation}\label{flow-gauss}
   \frac{\partial s}{\partial t}=\Psi(\tau_{ij})+\phi(t)=-F_*(\tau_{ij})^{-\alpha}+\phi(t)
 \end{equation}
for the support function $s$, where $F_*(\tau_{ij})$ can be viewed as the function $F_*$ evaluated at the eigenvalues of $\tau_{ij}$. This implies the existence of a smooth solution of \eqref{flow-VMCF}
for a short time for any smooth, strictly convex initial hypersurface.
\begin{lem}[cf. Lemma 10 in \cite{Andrews-McCoy-Zheng}]
Under the flow \eqref{flow-gauss}, the speed $\Psi$ evolves according to
\begin{equation}\label{flow-Psi}
  \frac{\partial\Psi}{\partial t}=\dot{\Psi}^{kl}\bar{\nabla}_k\bar{\nabla}_l\Psi+(\Psi+\phi(t))\dot{\Psi}^{kl}\bar{g}_{kl}.
\end{equation}
The inverse second fundamental form $\tau_{ij}$ evolves by
\begin{align*}
  \frac{\partial}{\partial t}\tau_{ij}=&\dot{\Psi}^{kl}\bar{\nabla}_k\bar{\nabla}_l\tau_{ij}+\ddot{\Psi}^{kl,pq}\bar{\nabla}_i\tau_{kl}\bar{\nabla}_j\tau_{pq}\nonumber\\
  &\quad -\dot{\Psi}^{kl}\bar{g}_{kl}\tau_{ij}+((1-\alpha)\Psi+\phi(t))\bar{g}_{ij},
\end{align*}
and equivalently
\begin{align}\label{flow-tau2}
  \frac{\partial}{\partial t}\tau_{ij}=&\dot{\Psi}^{kl}\bar{\nabla}_k\bar{\nabla}_l\tau_{ij}+\alpha F_*^{-\alpha-1}\ddot{F}_*^{kl,pq}\bar{\nabla}_i\tau_{kl}\bar{\nabla}_j\tau_{pq}-\alpha(\alpha+1)F_*^{-\alpha-2}\bar{\nabla}_iF_*\bar{\nabla}_jF_*\nonumber\\
  &\quad -\alpha F_*^{-\alpha-1}\dot{F}_*^{kl}\bar{g}_{kl}\tau_{ij}+((\alpha-1)F_*^{-\alpha}+\phi(t))\bar{g}_{ij},
\end{align}
\end{lem}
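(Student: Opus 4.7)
The starting point is the Gauss map equation \eqref{flow-gauss}, $\partial_t s=\Psi+\phi(t)$, combined with the defining relation $\tau_{ij}=\bar{\nabla}_i\bar{\nabla}_j s+\bar{g}_{ij}s$. Since $\phi(t)$ is spatially constant, differentiating $\tau_{ij}$ in $t$ produces the master identity
\begin{equation*}
\partial_t\tau_{ij}=\bar{\nabla}_i\bar{\nabla}_j\Psi+\bar{g}_{ij}(\Psi+\phi(t)),
\end{equation*}
from which both evolution equations will be extracted.

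The equation \eqref{flow-Psi} for the speed follows at once by contracting the master identity with $\dot{\Psi}^{ij}$ and using the chain rule $\partial_t\Psi=\dot{\Psi}^{ij}\partial_t\tau_{ij}$; no further ingredient is needed.

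For the first form of the evolution of $\tau_{ij}$, expand $\bar{\nabla}_i\bar{\nabla}_j\Psi$ via the chain rule as $\dot{\Psi}^{kl}\bar{\nabla}_i\bar{\nabla}_j\tau_{kl}+\ddot{\Psi}^{kl,pq}\bar{\nabla}_i\tau_{kl}\bar{\nabla}_j\tau_{pq}$, and then convert the Hessian into a Laplacian-type expression. Two ingredients are needed. First, $\bar{\nabla}_k\tau_{ij}$ is totally symmetric in its three indices, a Codazzi-type identity on $\mathbb{S}^n$ obtained by commuting third covariant derivatives of the scalar $s$ against the round-metric Riemann tensor $\bar{R}_{abcd}=\bar{g}_{ac}\bar{g}_{bd}-\bar{g}_{ad}\bar{g}_{bc}$. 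Second, the commutator $[\bar{\nabla}_a,\bar{\nabla}_b]$ on a symmetric $(0,2)$-tensor produces only $\bar{g}$-contracted copies of that tensor. Combining these with two uses of the Codazzi symmetry yields the index-permutation identity
\begin{equation*}
\bar{\nabla}_i\bar{\nabla}_j\tau_{kl}=\bar{\nabla}_k\bar{\nabla}_l\tau_{ij}+\bar{g}_{il}\tau_{jk}-\bar{g}_{jk}\tau_{il}+\bar{g}_{ij}\tau_{kl}-\bar{g}_{kl}\tau_{ij}.
\end{equation*}
Contracting with $\dot{\Psi}^{kl}$, the first two correction terms cancel because $\tau$ and $\dot{\Psi}$ are symmetric and simultaneously diagonalisable, so $\dot{\Psi}^{kl}\bar{g}_{il}\tau_{jk}=\dot{\Psi}^{kl}\bar{g}_{jk}\tau_{il}=(\tau\dot{\Psi})_{ij}$. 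The remaining two contributions collapse to $-\alpha\Psi\bar{g}_{ij}-\dot{\Psi}^{kl}\bar{g}_{kl}\tau_{ij}$ by Euler's identity $\dot{\Psi}^{kl}\tau_{kl}=-\alpha\Psi$, which holds because $\Psi=-F_*^{-\alpha}$ is $(-\alpha)$-homogeneous in $\tau$. Adding back the $\bar{g}_{ij}(\Psi+\phi(t))$ term from the master identity produces the coefficient $((1-\alpha)\Psi+\phi(t))\bar{g}_{ij}$ as stated.

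The equivalent form \eqref{flow-tau2} is now purely mechanical: substitute $\Psi=-F_*^{-\alpha}$ together with the chain-rule expressions $\dot{\Psi}^{kl}=\alpha F_*^{-\alpha-1}\dot{F}_*^{kl}$ and $\ddot{\Psi}^{kl,pq}=\alpha F_*^{-\alpha-1}\ddot{F}_*^{kl,pq}-\alpha(\alpha+1)F_*^{-\alpha-2}\dot{F}_*^{kl}\dot{F}_*^{pq}$, and use $\dot{F}_*^{kl}\bar{\nabla}_i\tau_{kl}=\bar{\nabla}_iF_*$ to identify the gradient-squared term. The only genuinely delicate step in the argument is the commutator computation, where one must verify that the linear-in-$\tau$ correction terms collapse into the simple combination displayed above; the remainder is chain rule and Euler's homogeneity identity.
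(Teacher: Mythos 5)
Your proposal is correct and follows the standard route: time-differentiate $\tau_{ij}=\bar\nabla_i\bar\nabla_j s+\bar g_{ij}s$ to get the master identity, contract with $\dot\Psi^{ij}$ for the speed equation, use the Codazzi symmetry of $\bar\nabla\tau$ plus the Ricci identity on the round sphere to flip indices, apply Euler's identity for the degree $-\alpha$ homogeneity of $\Psi$, and finish with the chain rule for $\Psi=-F_*^{-\alpha}$. This is essentially the same computation as in Lemma 10 of Andrews--McCoy--Zheng, which the paper cites without repeating the derivation; all of your key intermediate identities (the index-permutation formula, the cancellation $\dot\Psi^{kl}\bar g_{il}\tau_{jk}=\dot\Psi^{kl}\bar g_{jk}\tau_{il}$ from simultaneous diagonalisability, and $\dot\Psi^{kl}\tau_{kl}=-\alpha\Psi$) are correct.
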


\subsection{Mixed volumes and Alexandrov-Fenchel inequalities}\label{sec:mixed vol}

For any convex body $\Omega$ in $\mathbb{R}^{n+1}$, we have defined the mixed volumes $V_{n+1-j}(\Omega)$ in \eqref{s1:MixedV}. The Alexandrov-Fenchel inequalities (see Equation (7.66) of \cite{Schn}) state that
\begin{equation}\label{AF-00}
  V_{n+1-j}^{k-i}(\Omega)~\geq~V_{n+1-i}^{k-j}(\Omega)V_{n+1-k}^{j-i}(\Omega)
\end{equation}
for any convex body $\Omega$ and all $0\leq i<j<k\leq n+1$.  A special case of \eqref{AF-00} is the following
\begin{equation}\label{AF-1}
  V_{n+1-j}^2(\Omega)~\geq~V_{n-j}(\Omega)V_{n+2-j}(\Omega),\quad j=1,\cdots,n
\end{equation}
by letting $i=j-1$ and $k=i+1$ in \eqref{AF-00}. When $j=n$, the equality holds in \eqref{AF-1} if and only if $\Omega$ is homothetic to a ball. In fact, we have a stability result for \eqref{AF-1} when $j=n$:

\begin{lem}[\cite{Schn}*{Theorem 7.6.6 and Lemma 7.6.4}]\label{s2:stab-1}
\begin{equation}\label{AF-2}
  V_{1}^2(\Omega)-V_{0}(\Omega)V_{2}(\Omega)~\geq~\frac{C(n)}{V_1(\Omega)^n}\mathrm{d}_{H}(\Omega,B_{\Omega})^{n+2},
\end{equation}
where $\mathrm{d}_H$ is the Hausdorff distance of two subsets in $\mathbb{R}^{n+1}$, and $B_{\Omega}$ is a ball with the same Steiner point and mean width as $\Omega$.
\end{lem}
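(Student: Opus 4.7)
The plan is to work entirely on the sphere $\mathbb{S}^n$ through the support function $f := s_\Omega$, and reduce the stability inequality to a quantitative coercivity estimate for a second-order elliptic operator acting on a function supported on spherical harmonics of degree $\geq 2$. First, I would recall that, up to fixed dimensional constants, $V_1(\Omega)$ equals $\int_{\mathbb{S}^n} f\, d\sigma$, while $V_2(\Omega)$ can be written as the quadratic expression $\int_{\mathbb{S}^n} f\, Lf\, d\sigma$, where $Lf := (\bar\Delta f + n f)/n$ is the linearisation at the unit ball of the trace $\sigma_1(\tau_{ij})$ of the radii-of-curvature tensor $\tau_{ij} = \bar\nabla_i\bar\nabla_j f + \bar g_{ij} f$ appearing in Section~\ref{sec:2-3}. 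Since $V_0(\Omega)=\omega_n$ is a constant, the deficit $V_1^2(\Omega) - V_0(\Omega) V_2(\Omega)$ becomes an explicit quadratic functional in $f$.

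Second, I would decompose $h := s_\Omega - s_{B_\Omega}$ into spherical harmonics $h = \sum_{k \geq 0} h_k$. The normalisation built into $B_\Omega$ is crucial here: matching mean width forces $h_0 = 0$ (because $V_1$ is proportional to the mean of $f$), and matching the Steiner point forces $h_1 = 0$ (because the Steiner point is the $Y_1$-part of $f$). Expanding the quadratic form $V_1^2 - V_0 V_2$ in the spherical-harmonic basis and using $\bar\Delta h_k = -k(k+n-1)h_k$, one finds that the form diagonalises and that each surviving coefficient (for $k \geq 2$) is strictly positive, with a uniform positive gap at $k=2$. This is essentially Hilbert's proof of the Minkowski quadratic inequality $V_1^2 \geq V_0 V_2$, and retaining the quantitative gap yields
$$V_1^2(\Omega) - V_0(\Omega) V_2(\Omega) \;\geq\; C_1(n)\, \|h\|_{L^2(\mathbb{S}^n)}^2.$$

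Third, I would upgrade this $L^2$ bound to an $L^\infty$ bound. The key observation is that $h$ is the difference of two support functions of convex bodies and therefore Lipschitz on $\mathbb{S}^n$ with constant $\leq \mathrm{diam}(\Omega) + \mathrm{diam}(B_\Omega) \leq C_2(n)\, V_1(\Omega)$, since $V_1(\Omega)$ controls the mean width and hence the diameter. A standard spherical-cap argument then yields
$$\|h\|_{L^2(\mathbb{S}^n)}^2 \;\geq\; C_3(n)\, \frac{\|h\|_{L^\infty}^{n+2}}{V_1(\Omega)^n}:$$
at a point where $|h|$ attains its maximum $D = \mathrm{d}_H(\Omega, B_\Omega)$, Lipschitz control forces $|h| \geq D/2$ on a spherical cap of angular radius $\sim D/V_1(\Omega)$, whose measure is $\sim (D/V_1(\Omega))^n$, contributing $\sim D^{n+2}/V_1(\Omega)^n$ to the $L^2$-squared norm. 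Combining the two displayed inequalities gives \eqref{AF-2} with an explicit $C(n)$.

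The main obstacle I expect is the algebraic identification of $V_1^2 - V_0 V_2$ as a manifestly non-negative, spectrally-gapped quadratic form in $f$ with the correct constants; this requires carefully carrying out the integration by parts that converts the mixed-volume expression into a symmetric bilinear form on support functions, and then diagonalising via spherical harmonics. A secondary subtlety is tracking the correct power of $V_1(\Omega)$: the factor $V_1^n$ in the denominator arises as $(\mathrm{Lip}\,h)^n$ in the interpolation and must match precisely the scaling of the $L^2$-gap, so that the final inequality is scale-invariant under dilations of $\Omega$. Once these two bookkeeping points are settled, the stability estimate follows from the combination of the Hilbert-style spectral gap and the elementary reverse-interpolation for Lipschitz functions on $\mathbb{S}^n$.
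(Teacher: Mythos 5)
The paper does not prove this lemma at all: it is stated as an imported fact and cited directly from Schneider's monograph \cite{Schn} (Theorem 7.6.6 for the full stability estimate and Lemma 7.6.4 for the spectral input). So there is no ``paper's own proof'' to compare against; what you have supplied is a reconstruction of the cited result.

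That said, your reconstruction is correct and is essentially the argument in Schneider. A few small points worth making explicit, since you passed over them. First, $V_1^2-V_0V_2$ is not literally a quadratic form in $f=s_\Omega$ because of the $V_1^2$ term; what makes the reduction work is the cancellation $L(f)=L(f_0)$ (from $h_0=0$) together with the vanishing of the cross term $Q(f_0,h)$: integrating by parts moves the operator $\bar\Delta+n$ onto $f_0=r+p\cdot z$, and $(\bar\Delta+n)(p\cdot z)=0$ while $(\bar\Delta+n)r=nr$ pairs to zero with $\int h\,d\sigma=0$. After this one really does have $V_1^2-V_0V_2=\tfrac{\omega_n}{n}\int(|\bar\nabla h|^2-nh^2)\,d\sigma$, and the spherical-harmonic eigenvalue $k(k+n-1)-n\geq n+2$ for $k\geq2$ gives the spectral gap; this step needs $h_1=0$ to kill the null eigenvalue at $k=1$, which is exactly what matching Steiner points buys you. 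Second, the upgrade from $L^2$ to $L^\infty$ requires the Lipschitz constant of $h$ to be controlled by $V_1(\Omega)$, and this uses that the support functions are taken with respect to the common Steiner point (so that $|\bar\nabla s|$ is bounded by the diameter, which is comparable to the mean width, which is proportional to $V_1$); your spherical-cap computation then gives exactly the exponent $n+2$ and the factor $V_1^{-n}$, and the final inequality is scale-invariant as you check. Third, the formula $V_2=\int f\,Lf\,d\sigma$ with $Lf=(\bar\Delta f+nf)/n$ is derived for smooth strictly convex bodies from the mixed-discriminant representation of mixed volumes, and the general case follows by Hausdorff approximation; this is a standard but nonempty step. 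In short: your argument is sound, it matches the Schneider proof the paper cites, and the only work remaining is the bookkeeping you already flagged.
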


Lemma \ref{s2:stab-1} will be used to prove the Hausdorff convergence of $M_t$ to a sphere as $t\to\infty$ in the case $k=n$ of the flow \eqref{flow-VMCF}. Another special case of \eqref{AF-00} is when $k=n+1$, it reduces to
\begin{equation}\label{AF-k=n+1}
  V_{n+1-j}^{n+1-i}(\Omega)~\geq~\omega_n^{j-i}V_{n+1-i}^{n+1-j}(\Omega)
\end{equation}
for all $0\leq i<j<n+1$. In this case, the equality of \eqref{AF-k=n+1} also characterizes balls.

We also need the following useful lemma concerning the continuity of the mixed volumes.
\begin{lem}\label{s2:lem2}
If $0<R_1\leq R_2<\infty$, there exists a constant $C=C(R_1,R_2)$ such that any two convex bodies $\Omega_1,\Omega_2$ which can be translated to have $R_1B\subset \Omega_1,\Omega_2\subset R_2B$, where $B$ is the unit Euclidean ball, satisfy
\begin{equation*}
  \left|\frac{V_{n-k}(\Omega_1)}{V_n(\Omega_1)}-\frac{V_{n-k}(\Omega_2)}{V_n(\Omega_2)}\right|~\leq ~C d_{\mathrm{H}}(\Omega_1,\Omega_2)
\end{equation*}
\end{lem}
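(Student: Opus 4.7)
My plan is to reduce the quotient estimate to two standard facts about mixed volumes: a Lipschitz bound for each $V_j$ with respect to the Hausdorff distance (with constant depending only on $R_2$), and a uniform positive lower bound $V_n(\Omega_i)\geq R_1^n\omega_n$. Both are immediate consequences of the multilinearity, monotonicity, and translation-invariance of mixed volumes, together with the hypothesis that the $\Omega_i$ can be sandwiched between $R_1B$ and $R_2B$.

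For the Lipschitz bound, set $\epsilon:=d_H(\Omega_1,\Omega_2)$, so that $\Omega_1\subset\Omega_2+\epsilon B$. Monotonicity and multilinearity of the mixed volumes give
\begin{equation*}
V_j(\Omega_1)\leq V_j(\Omega_2+\epsilon B)=\sum_{i=0}^{j}\binom{j}{i}\epsilon^{j-i}V_i(\Omega_2)=V_j(\Omega_2)+\sum_{i=0}^{j-1}\binom{j}{i}\epsilon^{j-i}V_i(\Omega_2),
\end{equation*}
and since $\Omega_2$ can be translated into $R_2 B$ we have $V_i(\Omega_2)\leq V_i(R_2 B)=R_2^{i}\omega_n$ (using that each $V_i$ is translation-invariant). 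Exchanging the roles of $\Omega_1$ and $\Omega_2$, and treating the case $\epsilon>1$ by the crude uniform bound $|V_j(\Omega_1)-V_j(\Omega_2)|\leq 2R_2^j\omega_n\leq 2R_2^j\omega_n\cdot\epsilon$, yields the desired estimate $|V_j(\Omega_1)-V_j(\Omega_2)|\leq C(R_2,n,j)\,d_H(\Omega_1,\Omega_2)$. For the lower bound, monotonicity gives $V_n(\Omega_i)\geq V_n(R_1 B)=R_1^n\omega_n>0$.

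The lemma then follows from the algebraic identity
\begin{equation*}
\frac{V_{n-k}(\Omega_1)}{V_n(\Omega_1)}-\frac{V_{n-k}(\Omega_2)}{V_n(\Omega_2)}=\frac{V_n(\Omega_2)\bigl(V_{n-k}(\Omega_1)-V_{n-k}(\Omega_2)\bigr)+V_{n-k}(\Omega_2)\bigl(V_n(\Omega_2)-V_n(\Omega_1)\bigr)}{V_n(\Omega_1)V_n(\Omega_2)},
\end{equation*}
by applying the Lipschitz bound to each difference in the numerator, bounding $V_n(\Omega_2)$ and $V_{n-k}(\Omega_2)$ above by powers of $R_2$ times $\omega_n$, and bounding the denominator below by $(R_1^n\omega_n)^2$. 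No step is genuinely obstructive: the assertion is essentially a continuity statement for mixed volumes, and the only mild subtlety is that $d_H$ depends on the positions of the $\Omega_i$ while the mixed volumes do not; since we only require upper bounds on $d_H$ and the left-hand side of the claimed inequality is translation-invariant, it is harmless to place each body individually inside $R_2 B$ when bounding $V_i(\Omega_j)$.
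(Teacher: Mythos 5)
Your proposal is correct and takes essentially the same approach as the paper: both bound $|V_j(\Omega_1)-V_j(\Omega_2)|$ by $C\,d_H(\Omega_1,\Omega_2)$ via the inclusion $\Omega_1\subset\Omega_2+\delta B$ and multilinearity of mixed volumes, bound $V_n$ from below by $R_1^n\omega_n$, and finish with the same algebraic identity for the difference of quotients. Your treatment of the $\epsilon>1$ case is slightly redundant (the hypotheses force $d_H(\Omega_1,\Omega_2)<R_2$ once the bodies are translated to the normalized position, which is what the paper uses), but it is harmless.
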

\proof
Recall that the Hausdorff distance between two regions is defined as
\begin{equation*}
  d_\mathrm{H}(\Omega_1,\Omega_2)=~\inf\left\{\delta: \Omega_1\subset \Omega_2+\delta B,~\mathrm{and}~\Omega_2\subset \Omega_1+\delta B\right\}
\end{equation*}
Let $\delta=d_\mathrm{H}(\Omega_1,\Omega_2)$. Then $\delta<R_2$ and $\Omega_1\subset \Omega_2+\delta B$.
\begin{align*}
  V_{n-k}(\Omega_1)-V_{n-k}(\Omega_2) =&~V[\underbrace{\Omega_1,\cdots,\Omega_1}_{n-k},\underbrace{B,\cdots,B}_{k+1}]- V[\underbrace{\Omega_2,\cdots,\Omega_2}_{n-k},\underbrace{B,\cdots,B}_{k+1}] \\
  \leq &~ V[\underbrace{\Omega_2+\delta B,\cdots,\Omega_2+\delta B}_{n-k},\underbrace{B,\cdots,B}_{k+1}]- V[\underbrace{\Omega_2,\cdots,\Omega_2}_{n-k},\underbrace{B,\cdots,B}_{k+1}]\\
  \leq&~ C\delta R_2^{n-k-1}\omega_n
\end{align*}
Similarly,
\begin{align*}
  V_{n}(\Omega_1)-V_{n}(\Omega_2)\leq&~ C\delta R_2^{n-1}\omega_n.
\end{align*}
We also have
\begin{equation*}
  R_1^{n-k}\omega_n~\leq V_{n-k}(\Omega_1), V_{n-k}(\Omega_2)~\leq~R_2^{n-k}\omega_n.
\end{equation*}
Hence,
\begin{align*}
  \left|\frac{V_{n-k}(\Omega_1)}{V_n(\Omega_1)}-\frac{V_{n-k}(\Omega_2)}{V_n(\Omega_2)}\right|\leq & ~V_n(\Omega_1)^{-1}\left|V_{n-k}(\Omega_1)-V_{n-k}(\Omega_2)\right| \\
   & \quad +\frac{V_{n-k}(\Omega_2)}{V_n(\Omega_1)V_n(\Omega_2)}\left|V_n(\Omega_1)-V_n(\Omega_2)\right|\\
   \leq&~C\delta\left(R_2^{n-k-1}R_1^{-n}+R_2^{2n-k-1}R_1^{-2n}\right).
\end{align*}
This completes the proof.
\endproof

\subsection{Curvature measures}\label{sec:2-4}
To characterize the limit of the flow, we will employ the curvature measure of convex bodies. Given a convex body $\Omega$ in $\mathbb{R}^{n+1}$, $\rho>0$ and any Borel set $\beta\in \mathcal{B}(\mathbb{R}^{n+1})$, we consider the following local parallel set
 \begin{equation*}
   A_{\rho}(\Omega,\beta):=\{x\in \mathbb{R}^{n+1}:~0<d(\Omega,x)\leq\rho,~p(\Omega,x)\in\beta\}
 \end{equation*}
which is the set of all points $x\in \mathbb{R}^{n+1}$ such that the distance $d(\Omega,x)\leq \rho$ and the nearest point $p(\Omega,x)$ belongs to $\beta$. By the theory of convex geometry (see \cite[\S 4.2]{Schn}), the area of $A_\rho(\Omega,\beta)$ is a polynomial in the parameter $\rho$: Precisely,
 \begin{align*}
   \mathcal{H}^{n+1}( A_{\rho}(\Omega,\beta))=&\frac 1{n+1}\sum_{m=0}^n\rho^{n+1-m}{\binom{n+1}{m}}\mathcal{C}_m(\Omega,\beta)
 \end{align*}
for $\beta\in \mathcal{B}(\mathbb{R}^{n+1})$ and $\rho>0$. The coefficients $\mathcal{C}_0(\Omega,\cdot), \cdots, \mathcal{C}_n(\Omega,\cdot)$ are called the \emph{curvature measures} of the convex body $\Omega$, which are Borel measures on $\mathbb{R}^{n+1}$. If $\Omega$ is $(n+1)$-dimensional, then (see Theorem 4.2.3 of \cite{Schn})
\begin{equation}\label{s2:cur-meas-n}
  \mathcal{C}_n(\Omega,\beta)=~\mathcal{H}^n(\beta\cap\partial\Omega)
\end{equation}
for $\beta\in \mathcal{B}(\mathbb{R}^{n+1})$. The function $\Omega\mapsto \mathcal{C}_{m}(\Omega,\cdot)$ is weakly continuous with respect to Hausdorff distance (equivalent to the statement that $\int f d{\mathcal C}_m(\Omega)=\lim_{i\to\infty}\int f d{\mathcal C}_m(\Omega_i)$ whenever $f$ is a bounded continuous function on $\RR^{n+1}$ and $d_{\mathcal H}(\Omega_i,\Omega)\to 0$). If $\Omega$ is a convex body of class $C^2_+$, and the boundary $\partial \Omega$ has principal curvatures $\kappa=(\kappa_1,\cdots, \kappa_n)$, then the curvature measure has the equivalent form
\begin{align}
 \mathcal{ C}_{m}(\Omega,\beta)=&\int_{\beta\cap \partial \Omega}E_{n-m}(\kappa)d\mathcal{H}^n\label{s2:cur-meas}
\end{align}
for any Borel set $\beta\in \mathcal{B}(\mathbb{R}^{n+1})$.

The following is a generalization of the classical Alexandrov Theorem in differential geometry.
\begin{thm}[Theorem 8.5.7 in \cite{Schn}]\label{s2:thm-meas}
Let $m\in\{0,\cdots,n-1\}$. If $\Omega$ is a convex body with nonempty interior, satisfying
\begin{equation*}
  \mathcal{C}_{m}(\Omega,\beta)=~c~ \mathcal{C}_{n}(\Omega,\beta)
\end{equation*}
for any Borel set $\beta\in \mathcal{B}(\mathbb{R}^{n+1})$, where $c>0$ is a constant, then $\Omega$ is a ball.
\end{thm}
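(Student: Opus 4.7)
The plan is to run the Alexandrov moving plane method in the weak setting of curvature measures. As motivation, when $\partial\Omega$ is $C^2_+$, the representation \eqref{s2:cur-meas} together with \eqref{s2:cur-meas-n} reduces the hypothesis $\mathcal{C}_m(\Omega,\beta)=c\,\mathcal{C}_n(\Omega,\beta)$ for every Borel $\beta$ to the pointwise identity $E_{n-m}(\kappa_1,\dots,\kappa_n)=c$ on $\partial\Omega$, which is precisely the hypothesis of the classical Alexandrov theorem. The task is therefore to carry out the reflection argument in this weaker measure setting.

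For each unit vector $u \in \mathbb{S}^n$ and $t \in \RR$, let $H_{u,t}=\{x:\langle x,u\rangle=t\}$ and write $\sigma_{u,t}$ for reflection in $H_{u,t}$. Starting with $t$ larger than $\sup_{x\in\Omega}\langle x,u\rangle$, I would decrease $t$ and let $t^*(u)$ be the first value at which the reflected cap $\sigma_{u,t}(\Omega\cap\{\langle x,u\rangle\ge t\})$ ceases to lie in the interior of the opposite half of $\Omega$. Continuity gives $\sigma_{u,t^*}(\Omega\cap\{\langle x,u\rangle\ge t^*\})\subseteq\Omega$, and the minimality of $t^*(u)$ forces one of two degeneracies: an interior touching between the reflected cap and $\partial\Omega$, or a tangent-rim contact at $H_{u,t^*}\cap\partial\Omega$. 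Since curvature measures are rigid-motion covariant, the reflected cap inherits the proportionality $\mathcal{C}_m = c\,\mathcal{C}_n$, so near any touching point the two convex sets satisfy the same measure equation.

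The crucial analytic input is a strong maximum principle in the measure-theoretic sense: if two convex bodies $\Omega_1 \subseteq \Omega_2$ share a common boundary point $q$, and the curvature measures of each satisfy $\mathcal{C}_m = c\,\mathcal{C}_n$ on neighborhoods of $q$ in their respective boundaries, then $\Omega_1$ and $\Omega_2$ coincide on a neighborhood of $q$. Granting this, a continuation argument (the set of points where the reflected cap agrees with $\Omega$ is open by the maximum principle, closed by covariance and the assumption, and $\partial\Omega$ is connected) upgrades local to global coincidence, so $\sigma_{u,t^*(u)}$ is a symmetry of $\Omega$ for every $u$. Every such reflection hyperplane then contains the Steiner point of $\Omega$, so all the hyperplanes $H_{u,t^*(u)}$ meet in a common point $p$, and $\Omega$ is invariant under reflection in every hyperplane through $p$. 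The resulting orthogonal-group invariance forces $\Omega$ to be a ball centred at $p$.

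The main obstacle is the strong maximum principle in the weak setting. Without a priori $C^2$ regularity of $\partial\Omega$, the equation $E_{n-m}(\kappa)=c$ is only a measure-theoretic statement and no classical Hopf lemma applies directly. The natural route is to exploit regularity theory of Caffarelli--Nirenberg--Spruck type for the fully nonlinear elliptic equation that the support function of $\Omega$ satisfies (the relevant operator is smooth and, once $\Omega$ is locally strictly convex, uniformly elliptic by concavity of $F_*$, cf.\ Section \ref{sec:2-3}): this upgrades $\partial\Omega$ to $C^{2,\alpha}$ near an interior touching point, reducing to the classical strong maximum principle for the linearisation. An alternative is a direct argument via the Alexandrov--Fenchel inequality for mixed area measures together with its equality characterisation. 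Once this rigidity step is secured, the moving-plane and symmetry steps are routine.
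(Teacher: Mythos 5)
The statement you are trying to prove is quoted in the paper as an external result (Theorem 8.5.7 of Schneider's book, originally \cite{Schn79}); the paper offers no proof, so the only question is whether your argument is sound on its own terms. It is not: the step you yourself flag as the ``main obstacle'' --- a strong maximum principle for convex bodies satisfying $\mathcal{C}_m = c\,\mathcal{C}_n$ only in the sense of measures --- is not merely a technicality to be patched, it is the entire content of the theorem, and neither of your proposed routes closes it.

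On the regularity route: Caffarelli--Nirenberg--Spruck theory (and the Krylov/Evans machinery it rests on) gives $C^{2,\alpha}$ regularity for solutions of concave, \emph{uniformly elliptic} fully nonlinear equations, and the support-function formulation of $E_{n-m}(\kappa)=c$ is uniformly elliptic only once one already knows two-sided bounds on the principal radii of curvature. A general convex body satisfying the measure identity may a priori have facets (some $\kappa_i = 0$) or conical points (some $\kappa_i = \infty$); the hypothesis $E_{n-m}=c>0$ weakly does not by itself exclude these for $0<n-m<n$, and without excluding them you cannot even start the bootstrap. (For the extreme cases $m=0$ and $m=n-1$ there are dedicated regularity theories --- Caffarelli for Monge--Amp\`ere, and the constant-mean-curvature theory --- but nothing uniform across $m$.) Moreover, in the moving-plane comparison the two objects at the touching point are a reflected cap and a half-body, not complete convex bodies, so even the measure hypothesis $\mathcal{C}_m=c\,\mathcal{C}_n$ must be localised and reinterpreted for these truncated sets before any PDE argument can be applied; you do not address this. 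The Alexandrov--Fenchel alternative is mentioned but not developed, and it is not clear how an equality characterisation for mixed volumes would yield the \emph{local} unique-continuation statement the reflection argument requires. In short, you have reduced the theorem to a lemma that is at least as hard as the theorem, and Schneider's actual proof (which you should consult) deliberately avoids moving planes precisely because of this: it works entirely within convex geometry, using the structure of curvature measures and mixed-volume inequalities rather than a pointwise Hopf lemma. As a proof proposal, this would need the strong maximum principle proved in full, including the regularity foundation, before it could be accepted.
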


\section{Monotonicity of the isoperimetric ratio}
For any convex body $\Omega$ in $\mathbb{R}^{n+1}$ and any integer $1\leq l\leq n$, define the following isoperimetric ratio:\begin{equation*}
{\mathcal I}_\ell(\Omega) = \frac{V_\ell(\Omega)^{n+1}}{V_{n+1}(\Omega)^\ell V_0^{n+1-\ell}}.
\end{equation*}
By the Alexandrov-Fenchel inequality \eqref{AF-k=n+1}, we have
\begin{equation}\label{s3:Il-2}
{\mathcal I}_\ell(\Omega)\geq 1,\quad 1\leq \ell\leq n,
\end{equation}
with equality if and only if $\Omega$ is a ball.

We note that all of these isoperimetric ratios are comparable, in the sense that a bound on any of these implies bounds on all of the others:  The fact that $r_m(\Omega)$ is non-increasing in $m$ implies that ${\mathcal I}_m(\Omega)^{\frac1{m(n+1)}}$ is non-increasing in $m$.  In the other direction, the Alexandrov-Fenchel inequality $V_i^{n+1-j}\geq V_{n+1}^{i-j}V_j^{n+1-i}$ for $i>j$ implies that ${\mathcal I}_m(\Omega)^{\frac{1}{n+1-m}}$ is non-decreasing in $m$.

\begin{prop}\label{prop-monot}
Let $M_t=X(M,t)=\partial\Omega_t$ be a smooth convex solution of the flow \eqref{flow-VMCF} on $[0,T)$ with the global term $\phi(t)$ given by \eqref{s1:phi-3}. Then $V_{n+1}(\Omega_t)$ is non-decreasing in $t$, $V_{n+1-k}(\Omega_t)$ is non-increasing in $t$, and the isoperimetric ratio ${\mathcal I}_{n+1-k}(\Omega_t)$ is monotone non-increasing in $t$.
\end{prop}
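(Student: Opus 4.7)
The strategy is to compute $\frac{d}{dt}V_{n+1}(\Omega_t)$ and $\frac{d}{dt}V_{n+1-k}(\Omega_t)$ explicitly and read off their signs from the structure of $\phi(t)$. Using the first-variation formula $\frac{d}{dt}V_{n+1-j}(\Omega_t) = (n+1-j)\int_{M_t} f\,E_j\,d\mu$ for a normal deformation at speed $f$, applied with $j=0$ and $j=k$, and substituting $f = \phi(t) - E_k^{\alpha/k}$, I obtain
\[
\tfrac{d}{dt}V_{n+1}(\Omega_t) = (n+1)V_n(\phi(t) - R_1),\qquad \tfrac{d}{dt}V_{n+1-k}(\Omega_t) = (n+1-k)V_{n-k}(\phi(t) - R_2),
\]
where $R_1 := V_n^{-1}\int_{M_t} E_k^{\alpha/k}\,d\mu$ and $R_2 := V_{n-k}^{-1}\int_{M_t} E_k^{1+\alpha/k}\,d\mu$. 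The target monotonicities therefore reduce to the single two-sided inequality $R_1 \leq \phi(t) \leq R_2$.

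Next I would rewrite \eqref{s1:phi-3}, with $A := G_a\,\omega_n^{1/(n+1)}V_{n+1}^{n/(n+1)} \geq 0$ and $B := G_b\,\omega_n^{1/(n+1-k)}V_{n+1-k}^{(n-k)/(n+1-k)} \geq 0$ (not both zero by hypothesis on $G$), in the form
\[
\phi(t) \;=\; \frac{A V_{n-k} R_2 + B V_n R_1}{A V_{n-k} + B V_n},
\]
exhibiting $\phi(t)$ as the weighted average of $R_2$ and $R_1$ with nonnegative weights $AV_{n-k}$ and $BV_n$. Consequently, both $R_1 \leq \phi(t)$ and $\phi(t) \leq R_2$ hold at once if and only if the single scalar inequality $R_1 \leq R_2$ is true. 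In short, the global term was \emph{engineered} so that both desired monotonicities collapse to one scalar comparison.

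The only real content of the proof is the inequality $R_1 \leq R_2$, which, using $V_n = \int_{M_t} 1\,d\mu$ and $V_{n-k} = \int_{M_t} E_k\,d\mu$ from \eqref{def-quermass}, reads
\[
\Bigl(\!\int_{M_t}\! 1\,d\mu\Bigr)\!\Bigl(\!\int_{M_t}\! E_k^{1+\alpha/k}\,d\mu\Bigr) \;\geq\; \Bigl(\!\int_{M_t}\! E_k\,d\mu\Bigr)\!\Bigl(\!\int_{M_t}\! E_k^{\alpha/k}\,d\mu\Bigr).
\]
This is a Chebyshev-type covariance inequality. From the identity
\[
V_n\!\int\! fg\,d\mu \;-\; \int\! f\,d\mu\int\! g\,d\mu \;=\; \tfrac12\!\int_{M_t}\!\int_{M_t}(f(x)-f(y))(g(x)-g(y))\,d\mu(x)\,d\mu(y),
\]
applied with $f = E_k$ and $g = E_k^{\alpha/k}$, the right-hand integrand is pointwise non-negative because $g$ is a monotone increasing function of $f$ (recall $E_k>0$ on the strictly convex hypersurface and $\alpha>0$).

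Finally, since $V_0 = \omega_n$ is a constant, logarithmic differentiation yields
\[
\tfrac{d}{dt}\log {\mathcal I}_{n+1-k}(\Omega_t) \;=\; (n+1)\frac{V_{n+1-k}'}{V_{n+1-k}} \;-\; (n+1-k)\frac{V_{n+1}'}{V_{n+1}} \;\leq\; 0,
\]
so ${\mathcal I}_{n+1-k}(\Omega_t)$ is non-increasing, completing the proof. I do not expect any genuine obstacle: the entire argument rests on recognising that the formula for $\phi(t)$ is designed precisely so that both monotonicities follow simultaneously from one Chebyshev-type inequality.
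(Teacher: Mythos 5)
Your proposal is correct and takes essentially the same route as the paper: both reduce the two monotonicities to the chain $\frac{1}{V_n}\int_{M_t} E_k^{\alpha/k}\,d\mu \leq \phi(t) \leq \frac{1}{V_{n-k}}\int_{M_t} E_k^{1+\alpha/k}\,d\mu$, which follows from the same Chebyshev-type integral inequality (the paper's \eqref{s3:Jensen}, there somewhat loosely labelled ``Jensen's inequality''). Your reframing of $\phi(t)$ as a weighted average of $R_1$ and $R_2$ is a clean way to make visible why both bounds hold simultaneously once $R_1\leq R_2$, but the argument is substantively identical to the paper's.
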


\proof
Since $\Omega_t$ is smooth and convex, we can write the mixed volumes $V_{n+1-k}(\Omega_t)$, $V_{n-k}(\Omega_t)$, and $V_n(\Omega_t)$ as in \eqref{def-quermass}.  Along the flow \eqref{flow-VMCF}, $V_{n+1}(\Omega_t)$ evolves by
\begin{align}\label{s3:Vn1}
  \frac d{dt}V_{n+1}(\Omega_t)=&(n+1)\frac d{dt}|\Omega_t|=(n+1)\int_{M_t}(\phi(t)-E_k^{{\alpha}/k})d\mu_t\nonumber\\
  =&(n+1)\left(\phi(t)V_n(\Omega_t)-\int_{M_t}E_k^{\alpha/k}d\mu_t\right).
\end{align}
Combining \eqref{evl-dmu}--\eqref{evl-El}, we also have
\begin{align}\label{s3:Vn1k}
  \frac d{dt}V_{n+1-k}(\Omega_t)=&~(n-k+1)\int_{M_t}E_{k}(\phi(t)-E_k^{{\alpha}/k})d\mu_t\nonumber\\
  =&(n+1-k)\left(\phi(t)V_{n-k}(\Omega_t)-\int_{M_t}E_k^{\frac{\alpha}k+1}d\mu_t\right).
\end{align}

The global term $\phi(t)$ is determined by the requirement that the flow \eqref{flow-VMCF} preserves the function $G(r_{n+1-k}(\Omega_t),r_{n+1}(\Omega_t))$, i.e.,
\begin{align}\label{s3:G1}
  0&=\frac d{dt}G\left(r_{n+1-k}(\Omega_t),r_{n+1}(\Omega_t)\right)\nonumber\\
  &=\frac{d}{dt}G\left(\omega_n^{-\frac{1}{n+1-k}}V_{n+1-k}(\Omega_t)^{\frac 1{n+1-k}},\omega_n^{-\frac{1}{n+1}}V_{n+1}(\Omega_t)^{\frac 1{n+1}}\right)\nonumber\\
  &=G_a\omega_n^{-\frac{1}{n+1-k}}V_{n+1-k}(\Omega_t)^{\frac {k-n}{n+1-k}}\frac 1{n+1-k}\frac d{dt}V_{n+1-k}(\Omega_t)\nonumber\\
  &\qquad +G_b\omega_n^{-\frac{1}{n+1}}V_{n+1}(\Omega_t)^{-\frac n{n+1}}\frac 1{n+1}\frac d{dt}V_{n+1}(\Omega_t)\nonumber\\
  &=\phi\left(G_a\omega_n^{-\frac{1}{n+1-k}}V_{n+1-k}^{\frac {k-n}{n+1-k}}V_{n-k}+G_b\omega_n^{-\frac{1}{n+1}}V_{n+1}(\Omega_t)^{-\frac n{n+1}}V_n
  \right)\nonumber\\&\quad\null-\left(G_a\omega_n^{-\frac{1}{n+1-k}}V_{n+1-k}^{\frac {k-n}{n+1-k}}\int_{M_t}E_k^{1+\frac{\alpha}{k}}
  +G_b\omega_n^{-\frac{1}{n+1}}V_{n+1}(\Omega_t)^{-\frac n{n+1}}\int_{M_t}E_k^\frac{\alpha}{k}
  \right).
\end{align}
Rearranging this gives the expression \eqref{s1:phi-3}.
Since $G_a\geq 0$ and $G_b\geq 0$, from the expression \eqref{s1:phi-3} and Jensen's inequality
\begin{equation}\label{s3:Jensen}
  \int_{M_t}E_k^{1+\frac{\alpha}{k}}d\mu_t~\geq ~\frac 1{|M_t|}\int_{M_t}E_{k}d\mu_t\int_{M_t}E_k^{\alpha/k}d\mu_t
\end{equation}
we always have
\begin{equation}\label{s3:phi-1}
  \frac 1{V_n(\Omega_t)}\int_{M_t}E_k^{\alpha/k} \leq \phi(t)\leq \frac{1}{V_{n-k}(\Omega_t)}\int_{M_t}E_k^{1+\frac{\alpha}{k}}.
\end{equation}
From the first inequality of \eqref{s3:phi-1} and equation \eqref{s3:Vn1} we deduce that $\frac{d}{dt}V_{n+1}(\Omega_t)\geq 0$, while from the second inequality of \eqref{s3:phi-1} and equation \eqref{s3:Vn1k} we deduce that $\frac{d}{dt}V_{n+1-k}(\Omega_t)\leq 0$.  The monotonicity of the isoperimetric ratio follows.
\endproof

\begin{prop}\label{s3:prop-2}
Let $M_t$ be a smooth convex solution of the flow \eqref{flow-VMCF} on $[0,T)$ with the global term $\phi(t)$ given by \eqref{s1:phi-3}. Then there exist constants $c_1,c_2, c_3,R_1,R_2$ depend only on $n,k,M_0$ such that
\begin{equation}\label{s3:AreaVol}
  0<c_1\leq |M_t|\leq c_2,\quad |\Omega_0|\leq |\Omega_t|\leq c_3
\end{equation}
and
\begin{equation}\label{s3:radius}
  0<R_1\leq \rho_-(t)\leq \rho_+(t)\leq R_2,
\end{equation}
where $\rho_+(t)=\rho_+(\Omega_t), \rho_-(t)=\rho_-(\Omega_t)$ are the inner radius and outer radius of $\Omega_t$ respectively.
\end{prop}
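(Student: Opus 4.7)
The plan is to extract all of the stated bounds from Proposition~\ref{prop-monot}, the Alexandrov-Fenchel inequalities, and two standard facts from convex geometry. First, $V_{n+1-k}(\Omega_t) \leq V_{n+1-k}(\Omega_0)$ combined with the monotonicity of $r_j(\Omega_t)$ in $j$ (a consequence of \eqref{AF-k=n+1}) gives
$$r_{n+1}(\Omega_t) \leq r_n(\Omega_t) \leq r_{n+1-k}(\Omega_t) \leq r_{n+1-k}(\Omega_0),$$
which produces $|\Omega_t| \leq c_3$ and $|M_t| = V_n(\Omega_t) \leq c_2$. The lower bound $|\Omega_t| \geq |\Omega_0|$ is immediate from Proposition~\ref{prop-monot}, and $|M_t| \geq c_1$ follows from the isoperimetric inequality $r_n(\Omega_t) \geq r_{n+1}(\Omega_t)$ applied to this lower bound.

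For the upper bound on $\rho_+(\Omega_t)$, I would use the monotonicity of $\mathcal{I}_{n+1-k}(\Omega_t)$ from Proposition~\ref{prop-monot} together with the observation made just before it that $\mathcal{I}_m(\Omega)^{1/(n+1-m)}$ is non-decreasing in $m$. This chain of comparisons yields
$$\mathcal{I}_1(\Omega_t)^{1/n} \leq \mathcal{I}_{n+1-k}(\Omega_t)^{1/k} \leq \mathcal{I}_{n+1-k}(\Omega_0)^{1/k}.$$
Since $\mathcal{I}_1 = V_1^{n+1}/(V_{n+1}V_0^n)$ and $V_{n+1}(\Omega_t)$ is uniformly bounded above by the first step, this bounds $V_1(\Omega_t)$ uniformly from above. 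As $V_1(\Omega)$ coincides up to a dimensional constant with the mean width of $\Omega$, and for convex bodies the mean width dominates the diameter up to a dimensional factor (the segment realising the diameter $d$ sits inside $\Omega$, and mean width is monotone under inclusion), this translates into a uniform bound $\rho_+(\Omega_t) \leq \mathrm{diam}(\Omega_t) \leq R_2$.

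The remaining and most delicate step is the lower bound on $\rho_-$. For this I would invoke the standard inequality
$$|\Omega| \leq C_n\,\rho_-(\Omega)\,\rho_+(\Omega)^n \qquad \text{for every convex body } \Omega \subset \mathbb{R}^{n+1},$$
which follows from John's ellipsoid theorem: choosing $E \subset \Omega \subset (n+1)E$ with semi-axes $a_1 \leq \cdots \leq a_{n+1}$, the monotonicity of inradius and circumradius under inclusion gives $a_1 \leq \rho_-(\Omega)$ and $a_{n+1} \leq \rho_+(\Omega)$, whereas $|\Omega| \leq (n+1)^{n+1}|E| = (n+1)^n\omega_n\prod_i a_i \leq (n+1)^n\omega_n\,a_1\,a_{n+1}^n$. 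Combining with the previously established bounds $|\Omega_t|\geq|\Omega_0|$ and $\rho_+(\Omega_t)\leq R_2$ then gives $\rho_-(\Omega_t) \geq R_1 > 0$. Everything else reduces to a straightforward chain of Alexandrov-Fenchel consequences, and the only non-trivial input is this final convex-geometric inequality.
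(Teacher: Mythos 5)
Your proof is correct. Parts (i) of the argument --- the area and volume bounds via the monotonicity of $V_{n+1}$ and $V_{n+1-k}$ together with \eqref{AF-k=n+1} --- and the outer-radius bound via $\mathcal{I}_1(\Omega_t)^{1/n}\leq\mathcal{I}_{n+1-k}(\Omega_t)^{1/k}$ and the comparison $V_1(\Omega_t)\geq c(n)\,\mathrm{diam}(\Omega_t)$ are essentially the same as the paper's. The one genuine departure is your treatment of the inradius lower bound. The paper establishes the \emph{inradius bound first}, directly from the Diskant inequality (Schneider, Eq.\ (7.28)), which gives
$\rho_-(t)\geq r_n(\Omega_t)-\bigl(r_n(\Omega_t)^{n+1}-r_{n+1}(\Omega_t)^{n+1}\bigr)^{1/(n+1)}$,
and then controls this using the bound on $\sigma(t)=\mathcal{I}_n(\Omega_t)^{1/(n(n+1))}$; only afterwards does it derive the circumradius bound. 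You instead establish the circumradius bound first and then invoke the convex-geometric inequality $|\Omega|\leq C_n\,\rho_-(\Omega)\,\rho_+(\Omega)^n$ (derived via John's ellipsoid) together with $|\Omega_t|\geq|\Omega_0|$ to force $\rho_-(\Omega_t)\geq R_1>0$. Both routes are sound; yours makes the inradius bound contingent on the circumradius bound (so the order of steps is not interchangeable in your write-up), but has the advantage of replacing Diskant's inequality --- which is somewhat specialised --- with the more widely-known John's ellipsoid theorem. The paper's version produces a constant $R_1$ expressed cleanly in terms of $r_{n+1}(\Omega_0)$ and $\sigma_0$, whereas yours goes through the product $|\Omega_0|/(C_n R_2^n)$; the difference is cosmetic.
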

\proof
(i). Firstly, since the volume of $\Omega_t$ is non-decreasing along the flow \eqref{flow-VMCF}, we have $|\Omega_t|\geq |\Omega_0|$.  The isoperimetric inequality \eqref{s3:Il-2} then implies that
\begin{equation}\label{Mt-ld}
  |M_t|^{\frac 1n}\geq \omega_n^{\frac 1{n(n+1)}}|\Omega_t|^{\frac 1{n+1}}\geq \omega_n^{\frac 1{n(n+1)}}|\Omega_0|^{\frac 1{n+1}},
\end{equation}
which gives the lower bound of $|M_t|$. On the other hand, since $V_{n+1-k}(\Omega_t)$ is non-increasing along the flow, by the Alexandrov-Fenchel inequality \eqref{AF-k=n+1} we have
\begin{align}\label{Mt-ub}
V_{n+1-k}(\Omega_0)~\geq &~V_{n+1-k}(\Omega_t)\geq \omega_n^{\frac{k-1}n}V_n(\Omega_t)^{\frac{n+1-k}n}~=\omega_n^{\frac{k-1}n}|M_t|^{\frac{n+1-k}n}.
\end{align}
This gives an upper bound of $|M_t|$ by a constant depending only on $n,k,M_0$.  This in turn gives an upper bound of $|\Omega_t|$ with the help of \eqref{Mt-ld}.

(ii). From the Alexandrov-Fenchel inequality \eqref{AF-k=n+1} and Proposition \ref{prop-monot}, we can estimate the isoperimetric ratio ${\mathcal I}_n(\Omega_t)$:
\begin{equation}\label{I0-ub}
\sigma(t):={\mathcal I}_n(\Omega_t)^{\frac{1}{n(n+1)}}=\frac{r_n(\Omega_t)}{r_{n+1}(\Omega_t)} \leq \frac{r_{n+1-k}(\Omega_t)}{r_{n+1}(\Omega_t)}=
{\mathcal I}_{n+1-k}(\Omega_t)^{\frac{1}{(n+1)(n+1-k)}}.
\end{equation}
The right-hand side is bounded by the initial value ${\mathcal I}_{n+1-k}(\Omega_0)^{\frac{1}{(n+1)(n+1-k)}}$ (which we denote by $\sigma_0$) for any $t\geq 0$.
The Diskant inequality \cite{Schn}*{Equation (7.28)} then gives a lower bound on the inradius:
\begin{align*}
\rho_-(t)&\geq r_n(\Omega_t)-(r_n(\Omega_t)^{n+1}-r_{n+1}(\Omega_t)^{n+1})^{\frac1{n+1}}\\
&= r_{n+1}(\Omega_t)\left(\sigma(t)-\left(\sigma(t)^{n+1}-1\right)^\frac1{n+1}\right)\\
&\geq r_{n+1}(\Omega_0)\left(\sigma_0-\left(\sigma_0^{n+1}-1\right)^{\frac1{n+1}}\right).
\end{align*}
Note that the diameter is also controlled, since
$$
\frac{r_1(\Omega_t)}{r_{n+1}(\Omega_t)}={\mathcal I}_1(\Omega_t)^{\frac1{n+1}}\leq{\mathcal I}_{n+1-k}(\Omega_t)^{\frac{n}{k(n+1)}}\leq \sigma_0^{\frac{n(n+1-k)}{k}}.
$$
The $1$-radius $r_1(\Omega_t)$ controls the diameter, since $V_1(\Omega_t)\geq V_1(L) = c(n)D(\Omega_t)$ where $L$ is a line segment of maximal length in $\Omega_t$.  Finally, the diameter controls the circumradius, so we have
$$
\rho_+(t)\leq C(n)\sigma_0^{\frac{n(n+1-k)}{k}}r_{n+1}(\Omega_t).
$$
\endproof

\section{Upper bound of $E_k$}\label{sec:ubEk}
By \eqref{s3:radius}, the inner radius of $\Omega_t$ is bounded below by a positive constant $R_1$. This implies that there exists a ball of radius $R_1$ contained in $\Omega_t$ for all $t\in [0,T)$. The following lemma shows the existence of a ball with fixed centre enclosed by our flow hypersurfaces on a suitable fixed time interval.

\begin{lem}\label{lem-inball}
Let $M_t$ be a smooth convex solution of the flow \eqref{flow-VMCF} on $[0,T)$ with the global term $\phi(t)$ given by \eqref{s1:phi-3}. For any $t_0\in [0,T)$, let $B(p_0,\rho_0)$ be the inball of $\Omega_{t_0}$, where $\rho_0=\rho_-(\Omega_{t_0})$. Then
\begin{equation}\label{lem3-1-eqn1}
  B(p_0,\rho_0/2)\subset \Omega_t,\quad t\in [t_0, \min\{T,t_0+\tau\})
\end{equation}
for some $\tau$ depending only on $n,\alpha,\Omega_0$.
\end{lem}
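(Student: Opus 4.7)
My plan is to use the Gauss-map parametrisation from Section~\ref{sec:2-3} to reduce the inball-preservation statement to a scalar ODE comparison for the support function of $\Omega_t$.

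Translating coordinates so that $p_0=0$, the assumption $B(0,\rho_0)\subset\Omega_{t_0}$ is equivalent to $s(z,t_0)\geq\rho_0$ for all $z\in\mathbb{S}^n$, and the desired conclusion $B(p_0,\rho_0/2)\subset\Omega_t$ is equivalent to $s(z,t)\geq\rho_0/2$ on $\mathbb{S}^n$. Under the flow we have $\partial_t s=\phi(t)-F_*(\tau_{ij})^{-\alpha}$ by \eqref{flow-gauss}, and the left inequality in \eqref{s3:phi-1}, together with $E_k\geq 0$ on a convex hypersurface, gives $\phi(t)\geq 0$.

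The key step is to compare $s$ from below with the solution $\Phi$ of the scalar ODE
\[
\dot\Phi=-\Phi^{-\alpha},\qquad \Phi(t_0)=\rho_0,
\]
given explicitly by $\Phi(t)=\bigl(\rho_0^{\alpha+1}-(\alpha+1)(t-t_0)\bigr)^{1/(\alpha+1)}$. To establish $s(z,t)\geq\Phi(t)$ I would introduce the perturbed comparison $\Phi_\delta$ solving $\dot\Phi_\delta=-\Phi_\delta^{-\alpha}-\delta$ with $\Phi_\delta(t_0)=\rho_0-\delta$, so that $s(\cdot,t_0)>\Phi_\delta(t_0)$ strictly. If $s-\Phi_\delta$ first vanished at some $(z_0,t_1)$ with $t_1>t_0$, then $z_0$ would be a spatial minimum of $s(\cdot,t_1)$, so $\bar\nabla s(z_0,t_1)=0$ and $\bar\nabla^2 s(z_0,t_1)\geq 0$, giving
\[
\tau_{ij}(z_0,t_1)=\bar\nabla_i\bar\nabla_j s+s\bar g_{ij}\geq s(z_0,t_1)\bar g_{ij}=\Phi_\delta(t_1)\bar g_{ij}.
\]
Since $F_*$ is monotone non-decreasing in its arguments with $F_*(\lambda,\dots,\lambda)=\lambda$, this yields $F_*(\tau_{ij})(z_0,t_1)\geq\Phi_\delta(t_1)$, hence
\[
\partial_t s(z_0,t_1)=\phi(t_1)-F_*(\tau_{ij})^{-\alpha}\geq -\Phi_\delta(t_1)^{-\alpha}>-\Phi_\delta(t_1)^{-\alpha}-\delta=\dot\Phi_\delta(t_1),
\]
which contradicts $\partial_t(s-\Phi_\delta)(z_0,t_1)\leq 0$ at a first touching from above. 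Letting $\delta\to 0$ gives $s(z,t)\geq\Phi(t)$ on the interval where $\Phi$ is positive.

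Finally I would choose $\tau=(1-2^{-\alpha-1})\rho_0^{\alpha+1}/(\alpha+1)$ so that $\Phi(t_0+\tau)=\rho_0/2$, which forces $s(\cdot,t)\geq\rho_0/2$ on $[t_0,\min\{T,t_0+\tau\})$ and hence \eqref{lem3-1-eqn1}. Since $\rho_0\geq R_1=R_1(n,k,\Omega_0)$ by Proposition \ref{s3:prop-2}, the time $\tau$ depends only on $n,\alpha,\Omega_0$ as required. The main technical point to be careful about is the perturbation argument, which turns the marginal maximum-principle inequality $\partial_t s\geq -s^{-\alpha}$ into a strict contradiction; everything else is standard.
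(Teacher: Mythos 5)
Your proof is correct and uses essentially the same idea as the paper's: both compare the distance from the fixed inball centre to the boundary against the ODE barrier $\dot\Phi=-\Phi^{-\alpha}$, using that the curvature at the nearest boundary point to the centre is at most the reciprocal of that distance and that $\phi(t)\geq 0$. The paper works with $r(t)=\min_{M_t}|X-p_0|$ directly (and states the Hamilton-trick differential inequality at the minimum), while you run the dual computation on the support function in the Gauss-map parametrisation and make the barrier comparison rigorous with the $\Phi_\delta$ perturbation; note only that you should take $\tau=(1-2^{-\alpha-1})R_1^{\alpha+1}/(\alpha+1)$ rather than $(1-2^{-\alpha-1})\rho_0^{\alpha+1}/(\alpha+1)$ so that $\tau$ is manifestly independent of $t_0$, as you indicate at the end.
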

\proof
Without loss of generality, we assume that $p_0$ is the origin. Then $\langle X,\nu\rangle\geq 0$ as long as $0\in M_t$, since $M_t$ is convex for each $t\in[0,T)$. Then
\begin{equation*}
  \frac{\partial}{\partial t}|X|^2=2(\phi-E_k^{\alpha/k})\langle X,\nu\rangle\geq~-2E_k^{\alpha/k}\langle X,\nu\rangle.
\end{equation*}
Denote $r(t)=\min_{M_t}|X|$. At the minimum point, we have $\langle X,\nu\rangle=|X|=r$ and the principal curvature $\kappa_i\leq r^{-1}$. Then
\begin{equation*}
  \frac d{dt}r(t)\geq -r^{-\alpha}
\end{equation*}
which implies that
\begin{equation*}
  r(t)\geq \left(\rho_0^{1+\alpha}-(\alpha+1)(t-t_0)\right)^{\frac 1{1+\alpha}}\geq \rho_0/2
\end{equation*}
provided that $t-t_0\leq (1+\alpha)^{-1}(1-2^{-\alpha-1})\rho_0^{\alpha+1}$. Let $\tau=(1+\alpha)^{-1}(1-2^{-\alpha-1})R_1^{\alpha+1}$, which depends only on $n,\alpha,\Omega_0$. Then $B(p_0,\rho_0/2)\subset \Omega_t$ for any $t\in [t_0, \min\{T,t_0+\tau\})$.
\endproof

Now we can use the technique that was first introduced by Tso \cite{Tso85} to prove the upper bound of $E_k$ along the flow \eqref{flow-VMCF}.

\begin{thm}\label{Ek-ub}
Let $M_t$ be a smooth convex solution of the flow \eqref{flow-VMCF} on $[0,T)$ with the global term $\phi(t)$ given by \eqref{s1:phi-3}. Then we have $\max_{M_t}E_k\leq C$ for any $t\in [0,T)$, where $C$ depends on $n,k,\alpha,M_0$ but not on $T$.
\end{thm}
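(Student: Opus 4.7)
The plan is to adapt Tso's technique \cite{Tso85} to the constrained setting, using the geometric bounds from Proposition \ref{s3:prop-2} and the inball estimate from Lemma \ref{lem-inball} to obtain an auxiliary function whose maximum is controlled by an ODE with a dominant negative super-linear term.

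First, fix $t_0\in[0,T)$ and apply Lemma \ref{lem-inball}: after translation we may assume $0\in \Omega_t$ and $\langle X,\nu\rangle\geq R_1/2$ for every $t\in[t_0,\min\{T,t_0+\tau\})$, where $\tau$ depends only on $n,\alpha,\Omega_0$. Setting $\chi:=\langle X,\nu\rangle$ and $c:=R_1/4$, the quantity $\chi-c$ is bounded below by $R_1/4>0$ on this interval. Write $F=E_k^{\alpha/k}$ for the homogeneous speed and define the Tso-type auxiliary function
\[
W=\frac{F}{\chi-c}.
\]
Using \eqref{evl-g}--\eqref{evl-El} one computes $\partial_t\chi = (\phi-F)+\langle X,\nabla F\rangle-F\cdot \mathrm{(scale\ term)}$ and combines this with the standard parabolic evolution of $F$ (which, because $E_k^{1/k}$ is concave and degree one in the principal curvatures, has the form $\partial_t F=\dot F^{ij}\nabla_i\nabla_j F + F\cdot\dot F^{ij}h_i^l h_{lj} + \phi(t)\cdot\dot F^{ij}h_{ij}$ up to standard adjustments for the exponent $\alpha/k$). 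This yields a parabolic equation for $W$ of the form $\partial_t W = \dot F^{ij}\nabla_i\nabla_j W + (\text{gradient terms})+(\text{zeroth order terms})$.

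Next, evaluate at a spatial maximum of $W$. There, $\nabla W=0$ gives $\nabla F=W\nabla\chi$, which converts the gradient terms into a harmless contribution, and $\nabla^2 W\leq 0$ gives the usual sign condition. The key negative contribution is the term $-\frac{\chi}{\chi-c}\cdot\frac{F\,\dot F^{ij}h_i^l h_{lj}}{\chi-c}$, which by the inequality $\dot F^{ij}h_i^l h_{lj}\geq \tfrac{1}{n}F^{2-k/\alpha}/c_k$ (a consequence of Newton-MacLaurin and the homogeneity of $F$) dominates to produce a term $\leq -B\,W^{1+1/\alpha}(\chi-c)^{-1+1/\alpha}$. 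The remaining contribution from $\phi(t)$ is linear: at the max, it contributes $(\text{constant})\cdot\phi(t)\cdot W$ plus a lower-order $\phi(t)\cdot\dot F^{ij}\bar g_{ij}$ term that can be bounded by $C(1+\phi(t))W$ using Proposition \ref{s3:prop-2}. Combining these yields, at the spatial maximum,
\[
\frac{d}{dt}W_{\max}\leq C\bigl(1+\phi(t)\bigr)W_{\max}-B\,W_{\max}^{1+1/\alpha},
\]
with constants $B,C>0$ depending only on $n,k,\alpha$ and the geometric bounds of Proposition \ref{s3:prop-2}.

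The last input is the control of the forcing term $\phi(t)$. By the second inequality in \eqref{s3:phi-1} together with Proposition \ref{s3:prop-2},
\[
\phi(t)\leq \frac{|M_t|}{V_{n-k}(\Omega_t)}\max_{M_t}E_k^{1+\alpha/k}\leq C\max_{M_t}E_k^{1+\alpha/k}\leq C\bigl(\rho_+(t)\bigr)^{1+\alpha/k}W_{\max}^{(k+\alpha)/\alpha},
\]
so $\phi(t)W_{\max}\leq C\,W_{\max}^{1+(k+\alpha)/\alpha}$. Since $1+(k+\alpha)/\alpha$ could in principle exceed $1+1/\alpha$, one must check that the negative term still wins: in fact, the $-B W_{\max}^{1+1/\alpha}$ estimate in the previous step is sharpened by accounting for the factor $F$ hidden inside $\dot F^{ij}h_i^l h_{lj}\geq c F^2/(n E_{k-1})\geq c F^{2}(\chi-c)^{-1}\cdot(\text{bounded})$, which upgrades the negative term to $-B' W_{\max}^{2+k/\alpha}$ and hence outweighs the forcing when $W_{\max}$ is large. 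A standard ODE comparison argument then gives $W_{\max}(t)\leq C'(\tau)$ on $[t_0+\tau/2,t_0+\tau)$, independently of $W_{\max}(t_0)$. Iterating on consecutive intervals of length $\tau$, together with the initial bound $W_{\max}(0)\leq C(M_0)$ on the first interval $[0,\tau/2)$, yields $\sup_{[0,T)}\max_{M_t}F\leq C$ and hence the desired bound on $E_k$. The main obstacle is precisely the last point: ensuring that the super-linear growth of $\phi(t)$ in $\max E_k$ is still absorbed by the negative term at a spatial maximum, which requires the careful tracking of powers of $F$ in the Tso computation together with the uniform geometric bounds from Proposition \ref{s3:prop-2}.
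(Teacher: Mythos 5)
Your overall framework (Tso's auxiliary function $W=E_k^{\alpha/k}/(u-c)$ on the interval supplied by Lemma \ref{lem-inball}, followed by an ODE comparison and iteration in $t_0$) matches the paper's. However, there is a fundamental sign error in your treatment of the global term $\phi(t)$, and the attempted workaround does not actually close.

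In the evolution of $W$, the contribution of $\phi(t)$ is \emph{non-positive}: combining \eqref{evl-El} (which gives $\partial_t E_k^{\alpha/k}$ a term $-(\phi-E_k^{\alpha/k})\frac{\alpha}{k}E_k^{\alpha/k-1}(nE_1E_k-(n-k)E_{k+1})$) with \eqref{evl-u} (which gives $\partial_t u$ a term $+\phi(t)$), one finds that the total $\phi$-contribution to $\partial_t W$ is exactly
\[
-\,\phi\, W\left(\frac{\alpha}{k}\Bigl(nE_1-(n-k)\tfrac{E_{k+1}}{E_k}\Bigr)+\frac{1}{u-c}\right)\ \leq\ 0,
\]
by Newton--MacLaurin. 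So $\phi$ does not produce a forcing term to be absorbed; it can simply be discarded. Your proposal instead treats $\phi$ as an adversarial linear-in-$W$ source, estimates $\phi\lesssim W_{\max}^{(k+\alpha)/\alpha}$ via \eqref{s3:phi-1}, and then tries to show the negative term still dominates. Even accepting your claimed sharpening of the negative term to $-B'W_{\max}^{2+k/\alpha}$, the competing positive term is $\phi W_{\max}\lesssim W_{\max}^{1+(k+\alpha)/\alpha}=W_{\max}^{2+k/\alpha}$: the exponents are \emph{equal}, so no comparison of powers can conclude the negative term ``wins.'' Thus the proposed fix would not succeed on its own terms; the correct observation is that there is nothing to fix.

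Once the sign is corrected, the differential inequality becomes simply $\frac{d}{dt}\tilde W\leq \tilde W^2\bigl(\alpha+1-\alpha c^{1+1/\alpha}\tilde W^{1/\alpha}\bigr)$ (using $W\leq E_1^{\alpha}/c$), whose negative term has strictly higher power ($2+1/\alpha$ versus $2$) and yields a bound on $\tilde W$ after a short waiting time independent of the initial value, as in \eqref{evl-W-4}; iterating over intervals of length $\tau$ and using the smoothness of $M_0$ on the first interval completes the argument as you outlined.
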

\proof
For any given $t_0\in [0,T)$, using \eqref{lem3-1-eqn1} and the convexity of $M_t$, the support function $u=\langle X-p_0,\nu\rangle$ satisfies
\begin{equation}\label{s4:1}
  u-c\geq c>0,\quad \forall~t\in [t_0,\min\{T,t_0+\tau\}),
\end{equation}
where $c=\rho_0/4$. Define the function
\begin{equation*}
  W=\frac {E_k^{\alpha/k}}{u-c},
\end{equation*}
which is well-defined for all $t\in [t_0,\min\{T,t_0+\tau\})$. Combining \eqref{flow-VMCF} and \eqref{evl-nu} gives the evolution equation of $u(x,t)$ along the flow \eqref{flow-VMCF}
\begin{align}\label{evl-u}
   \frac{\partial}{\partial t}u =& \frac{\alpha}kE_{k}^{\frac{\alpha}k-1}\frac{\partial E_k}{\partial h_{ij}}\nabla_j\nabla_iu+\phi(t)-(1+\alpha)E_k^{\alpha/k}\nonumber\\
    &\quad +\frac{\alpha}kE_{k}^{\frac{\alpha}k-1} (nE_1E_k-(n-k)E_{k+1})u,
\end{align}
where $\nabla$ is the Levi-Civita connection on $M_t$ with respect to the induced metric. By \eqref{evl-El} and \eqref{evl-u}, we can compute the evolution equation of the function $W$
\begin{align*}
   \frac{\partial}{\partial t}W= & \frac{\alpha}kE_{k}^{\frac{\alpha}k-1}\frac{\partial E_k}{\partial h_{ij}}\left(\nabla_j\nabla_iW+\frac 2{u-c}\nabla_iu\nabla_jW\right)\nonumber \\
  &\quad-\phi W\left( \frac{\alpha}k(nE_1-(n-k)\frac{E_{k+1}}{E_k})+\frac{1}{u-c}\right)\nonumber\\
  &\quad +(\alpha+1)W^2-\frac{\alpha c}kW^2(nE_1-(n-k)\frac{E_{k+1}}{E_k}).
\end{align*}
The Newton-MacLaurin inequality and \eqref{s4:1} implies that
\begin{equation*}
  nE_1-(n-k)\frac{E_{k+1}}{E_k}\geq kE_1,\quad\mathrm{ and }~W\leq E_1^{\alpha}/c.
\end{equation*}
Then we obtain the following estimate
\begin{align}\label{evl-W-2}
   \frac{\partial}{\partial t}W\leq & \frac{\alpha}kE_{k}^{\frac{\alpha}k-1}\frac{\partial E_k}{\partial h_{ij}}\left(\nabla_j\nabla_iW+\frac 2{u-c}\nabla_iu\nabla_jW\right)\nonumber \\
  &\quad +W^2\left(\alpha+1-\alpha c^{1+\frac 1{\alpha}}W^{1/{\alpha}}\right)
\end{align}
holds on $[t_0,\min\{T,t_0+\tau\})$. Let $\tilde{W}(t)=\sup_{M_t}W(\cdot,t)$. Then \eqref{evl-W-2} implies that
\begin{equation*}
  \frac d{dt}\tilde{W}(t)\leq \tilde{W}^2\left(\alpha+1-\alpha c^{1+\frac 1{\alpha}}\tilde{W}^{1/{\alpha}}\right)
\end{equation*}
from which it follows from the maximum principle that
\begin{equation}\label{evl-W-4}
  \tilde{W}(t)\leq \max\left\{ \left(\frac {2(1+\alpha)}{\alpha}\right)^{\alpha}c^{-(\alpha+1)}, \left(\frac 2{1+\alpha}\right)^{\frac{\alpha}{1+\alpha}}c^{-1}t^{-\frac{\alpha}{1+\alpha}}\right\}.
\end{equation}
Then the upper bound on $E_k$ follows from \eqref{evl-W-4} and the facts that $c=\rho_0/4\geq R_1/4$ and $u-c\leq 2R_2$.
\endproof

Using the upper bound of $E_k$, we can prove the following estimate on the global term $\phi(t)$.
\begin{cor}\label{cor-phi-bd}
Let $M_t$ be a smooth convex solution of the flow \eqref{flow-VMCF} on $[0,T)$ with the global term $\phi(t)$ given by \eqref{s1:phi-3}. Then for any $p>0$ we have
\begin{equation}\label{cor-3-1-eqn1}
  0<C_1\leq \frac 1{|M_t|}\int_{M_t}E_k^pd\mu_t\leq C_2
\end{equation}
on $[0,T)$, where the constants $C_1,C_2$ depend only on $n,k,\alpha,p,M_0$. In particular,
\begin{equation}\label{cor-3-1-eqn2}
  C_1\leq~\phi(t)\leq C_2
\end{equation}
on $[0,T)$.
\end{cor}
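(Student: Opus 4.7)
The upper estimate in \eqref{cor-3-1-eqn1} is immediate: by Theorem \ref{Ek-ub} we have $E_k\leq C$ pointwise, so $\int_{M_t}E_k^p\,d\mu_t \leq C^p|M_t|$, giving $\frac1{|M_t|}\int_{M_t}E_k^p\,d\mu_t\leq C^p$. For the lower estimate the plan is to first produce a time-independent lower bound on the particular average $\int_{M_t}E_k\,d\mu_t/|M_t|$, and then interpolate to arbitrary powers. By \eqref{def-quermass} we have $\int_{M_t}E_k\,d\mu_t=V_{n-k}(\Omega_t)$. Applying the Alexandrov--Fenchel inequality \eqref{AF-k=n+1} with $i=0$, $j=k+1$ gives
\begin{equation*}
V_{n-k}(\Omega_t)^{n+1}\;\geq\;\omega_n^{k+1}V_{n+1}(\Omega_t)^{n-k},
\end{equation*}
and since $V_{n+1}(\Omega_t)=(n+1)|\Omega_t|\geq (n+1)|\Omega_0|$ by Proposition \ref{prop-monot}, the right-hand side is bounded below by a positive constant depending only on $n$, $k$ and $\Omega_0$. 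Combined with the upper bound $|M_t|\leq c_2$ from Proposition \ref{s3:prop-2}, this yields $V_{n-k}(\Omega_t)/|M_t|\geq c_0>0$.

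To pass from $p=1$ to general $p>0$ I would split into two cases. For $p\geq 1$, Jensen's inequality applied to the convex function $x\mapsto x^p$ with respect to the normalised measure $d\mu_t/|M_t|$ gives
\begin{equation*}
\frac{1}{|M_t|}\int_{M_t}E_k^p\,d\mu_t\;\geq\;\left(\frac{1}{|M_t|}\int_{M_t}E_k\,d\mu_t\right)^{p}\;\geq\;c_0^{p}.
\end{equation*}
For $0<p<1$, Jensen runs the wrong way, so instead I use the pointwise upper bound $E_k\leq C$ to write $E_k = E_k^{1-p}E_k^p\leq C^{1-p}E_k^p$, integrate, and obtain
\begin{equation*}
\int_{M_t}E_k^p\,d\mu_t\;\geq\;C^{\,p-1}\int_{M_t}E_k\,d\mu_t\;=\;C^{\,p-1}V_{n-k}(\Omega_t),
\end{equation*}
and dividing by $|M_t|\leq c_2$ produces the required lower bound. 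This proves \eqref{cor-3-1-eqn1}.

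For \eqref{cor-3-1-eqn2} I would feed these averaged bounds into the double inequality \eqref{s3:phi-1} established in the proof of Proposition \ref{prop-monot}. The lower bound $\phi(t)\geq \frac{1}{V_n(\Omega_t)}\int_{M_t}E_k^{\alpha/k}\,d\mu_t$ together with the lower estimate in \eqref{cor-3-1-eqn1} (applied with $p=\alpha/k$) gives $\phi(t)\geq C_1>0$. The upper bound $\phi(t)\leq \frac{1}{V_{n-k}(\Omega_t)}\int_{M_t}E_k^{1+\alpha/k}\,d\mu_t$ combined with the upper estimate in \eqref{cor-3-1-eqn1} (applied with $p=1+\alpha/k$), the area upper bound, and the lower bound on $V_{n-k}$ obtained above, gives $\phi(t)\leq C_2$.

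There is no serious obstacle in this corollary: the key analytic input (namely Theorem \ref{Ek-ub}) has already been established, and the rest is Alexandrov--Fenchel plus an elementary convexity/Jensen argument. The only point requiring care is the split between $p\geq 1$ and $0<p<1$, since Jensen's inequality is only useful in one direction; in the complementary range the pointwise curvature upper bound must be used to bridge the gap.
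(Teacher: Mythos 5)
Your proposal is correct and takes essentially the same approach as the paper: both use the Alexandrov--Fenchel inequality together with the monotonicity of $|\Omega_t|$ to bound $\int_{M_t}E_k\,d\mu_t$ from below, then pass to general $p$ by splitting at $p=1$ (using the pointwise bound $E_k\leq C$ for $p<1$ and Jensen/H\"older for $p\geq1$, which are equivalent here), and finally feed the resulting averaged bounds into \eqref{s3:phi-1} to control $\phi(t)$.
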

\proof
The upper bound in \eqref{cor-3-1-eqn1} follows from the upper bound on $E_k$. For the lower bound in \eqref{cor-3-1-eqn1}, if $0<p\leq 1$, by the Alexandrov-Fenchel inequality \eqref{AF-k=n+1} and the upper bound on $E_k$,
 \begin{align*}
   |\Omega_t|^{\frac{n-k}{n+1}}\leq \omega_n^{-\frac{k+1}{n+1}}\int_{M_t}E_kd\mu_t\leq& \omega_n^{-\frac{k+1}{n+1}}\sup_{M_t}E_k^{1-p}\int_{M_t}E_k^pd\mu_t\\
   \leq & C(n,k,\alpha,p,M_0)\int_{M_t}E_k^pd\mu_t
 \end{align*}
 Then the lower bound in \eqref{cor-3-1-eqn1} follows from the above inequality and the upper bound on $|M_t|$. If $p>1$, the lower bound follows similarly by using the following inequality
  \begin{align*}
   |\Omega_t|^{\frac{n-k}{n+1}}\leq ~\omega_n^{-\frac{k+1}{n+1}}\int_{M_t}E_kd\mu_t\leq& ~ \omega_n^{-\frac{k+1}{n+1}}\left(\int_{M_t}E_k^pd\mu_t\right)^{1/p}|M_t|^{1-\frac 1p}.
 \end{align*}
\endproof

\section{Long-time existence}\label{sec:LTE}

Let $[0,T_{\max})$ be the maximum interval such that the solution of the flow \eqref{flow-VMCF} exists.
\begin{lem}\label{s5:lem1}
If $M_0$ is strictly convex, then $M_t$ is strictly convex for all $t\in[0,T_{\max})$.
\end{lem}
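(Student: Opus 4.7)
The plan is to work in the Gauss map parametrisation of Subsection \ref{sec:2-3} and bound the largest eigenvalue $\lambda_{\max}$ of the inverse second fundamental form $\tau_{ij}$, since strict convexity of $M_t$ is equivalent to all principal radii being finite. On any subinterval $[0,T]\subset [0,T_{\max})$ this will follow from applying the parabolic maximum principle to the tensor evolution \eqref{flow-tau2}, with the usual smoothing (or Hamilton's tensor maximum principle) handling the non-smoothness of $\lambda_{\max}$.

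Contracting \eqref{flow-tau2} with a unit eigenvector $e$ associated to $\lambda_{\max}$ at a spatial maximum, the diffusion term $\dot{\Psi}^{kl}\bar{\nabla}_k\bar{\nabla}_l\tau_{ij}\,e^ie^j$ is non-positive. Since $F_*$ is concave on the positive cone, $\alpha F_*^{-\alpha-1}\ddot{F}_*^{kl,pq}\bar{\nabla}_i\tau_{kl}\bar{\nabla}_j\tau_{pq}\,e^ie^j\leq 0$, and the term $-\alpha(\alpha+1)F_*^{-\alpha-2}(\bar{\nabla}_eF_*)^2$ is manifestly non-positive. Discarding these yields
$$
\frac{d}{dt}\lambda_{\max}\leq -\alpha F_*^{-\alpha-1}\dot{F}_*^{kl}\bar{g}_{kl}\,\lambda_{\max}+(\alpha-1)F_*^{-\alpha}+\phi(t).
$$
The key algebraic input is that, since $F_*$ is $1$-homogeneous and monotone increasing, Euler's identity $\sum_l x_l\dot{F}_*^{ll}=F_*$ in the $\tau$-eigenbasis (with principal radii $x_l\leq \lambda_{\max}$) together with $\dot{F}_*^{ll}\geq 0$ gives
$$
\dot{F}_*^{kl}\bar{g}_{kl}=\sum_l \dot{F}_*^{ll}\geq \frac{F_*}{\lambda_{\max}}.
$$

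Substituting this bound absorbs the destabilising $(\alpha-1)F_*^{-\alpha}$ term and produces $\frac{d}{dt}\lambda_{\max}\leq -F_*^{-\alpha}+\phi(t)\leq \phi(t)$. By Corollary \ref{cor-phi-bd} there is a constant $C=C(n,k,\alpha,M_0)$ with $\phi(t)\leq C$, so integration gives $\lambda_{\max}(t)\leq \lambda_{\max}(0)+Ct$ on every $[0,T]\subset [0,T_{\max})$. Thus the principal radii remain finite at each $t\in[0,T_{\max})$, so $M_t$ is strictly convex throughout. The only real technicality is that $\lambda_{\max}$ is merely Lipschitz, requiring localisation at the peak in a fixed orthonormal frame; the substantive point is that concavity plus $1$-homogeneity of $F_*$ force the reaction term $-\alpha F_*^{-\alpha-1}\dot{F}_*^{kl}\bar{g}_{kl}\lambda_{\max}$ to beat the sign-indefinite $(\alpha-1)F_*^{-\alpha}$ contribution.
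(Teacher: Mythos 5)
Your argument is correct and follows the paper's proof in all essentials: apply the maximum principle to the largest eigenvalue of $\tau$ via \eqref{flow-tau2}, use concavity of $F_*$ to discard the gradient terms, and control the zero-order terms using $1$-homogeneity together with monotonicity of $F_*$ to absorb the $(\alpha-1)F_*^{-\alpha}$ contribution, leaving only $\phi(t)\leq C_2$. Your inequality $\sum_l\dot F_*^{ll}\geq F_*/\lambda_{\max}$ is just a rephrasing of the paper's identity $-F_*^{-\alpha}-\alpha F_*^{-(1+\alpha)}\sum_i\dot F_*^i(\tau_{\max}-\tau_i)+\phi(t)$ with the non-positive middle term dropped, so the two computations coincide.
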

\proof
We consider the inverse second fundamental form $\tau_{ij}$, which evolves by \eqref{flow-tau2}.  At a maximum point and maximum eigenvector of $\tau_{ij}$, the first term on the right of \eqref{flow-tau2} is non-positive.  Also,
since $F=E_k^{1/k}$ is inverse-concave, i.e., $F_*$ is concave,  the second and third terms on the right of \eqref{flow-tau2}, involving $\bar\nabla\tau$, are non-positive.  The remaining terms applied to the maximum eigenvector of $\tau$ can be written as $-F_*^{-\alpha}-\alpha F_*^{-(1+\alpha)}\sum_i\dot F_*^i\left(\tau_{\max}-\tau_i\right)+\phi(t)\leq \phi(t)$.
 Moreover, $\phi(t)\leq C_2$ by \eqref{cor-3-1-eqn2}. It follows that the maximum eigenvalue of $\tau_{ij}$ is bounded by the maximum at $t=0$ plus $C_2t$, and in particular is bounded on any finite time interval.  The principal curvatures are the reciprocals of the eigenvalues of $\tau$, and so have a positive lower bound on any finite time interval.
 \endproof

\begin{thm}\label{s5:thm-LTE}
Let $M_0$ be a closed and strictly convex hypersurface in $\mathbb{R}^{n+1}$ and $M_t$ be the smooth solution of the flow \eqref{flow-VMCF} with the global term $\phi(t)$ given by \eqref{s1:phi-3}. Then $M_t$ is strictly convex and exists for all time $t\in [0,\infty)$.
\end{thm}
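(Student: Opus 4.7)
The plan is to show that on every finite subinterval $[0,T]\subset[0,T_{\max})$ the principal curvatures of $M_t$ are pinched between two positive constants (depending on $T$), combine this with the concave structure of the flow equation to invoke standard fully nonlinear parabolic regularity, and then argue by contradiction that $T_{\max}=\infty$.

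First I would fix an arbitrary $T<T_{\max}$ and work on $[0,T]$. Lemma \ref{s5:lem1} (in its proof) already gives a time-dependent upper bound on the maximum eigenvalue of the inverse Weingarten tensor $\tau_{ij}$, of the form $\max_{M_0}\tau + C_2T$, so the principal curvatures $\kappa_i$ satisfy a uniform \emph{lower} bound $\kappa_i\geq c_0(T,M_0)>0$ on $[0,T]$. The matching upper bound on each $\kappa_j$ follows from this together with the uniform upper bound $E_k\leq C$ of Theorem \ref{Ek-ub}: by extracting only terms in the elementary symmetric sum that contain $\kappa_j$,
\begin{equation*}
 C\;\geq\;E_k\;\geq\;{\binom{n}{k}}^{-1}{\binom{n-1}{k-1}}\,\kappa_j\,c_0^{k-1}\;=\;\frac{k}{n}\,c_0^{k-1}\,\kappa_j,
\end{equation*}
so $\kappa_j\leq C'(T,M_0)$ for all $j$. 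Hence on $[0,T]$ all principal curvatures lie in a compact subset of $(0,\infty)$.

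With the curvatures pinched, the scalar equation \eqref{flow-gauss} for the support function $s$ on $\mathbb{S}^n$ becomes uniformly parabolic, and Corollary \ref{cor-phi-bd} provides a uniform bound on $\phi(t)$. Moreover, the operator $\Psi(\tau)=-F_*(\tau)^{-\alpha}$ is concave in the matrix $\tau$: $F_*$ is positive and concave (as noted in Section \ref{sec:2-3}), and the scalar function $x\mapsto -x^{-\alpha}$ is concave and increasing on $(0,\infty)$, so the composition is concave. Applying the Krylov--Safonov estimate to \eqref{flow-gauss} yields a H\"older estimate on $\tau_{ij}$ (equivalently on $D^2s$), and then the parabolic Evans--Krylov theorem produces a uniform $C^{2,\beta}$ bound on $s$ on $[0,T]$. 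Standard parabolic Schauder estimates then bootstrap to uniform $C^{k,\beta}$ bounds for every $k$, all depending only on $T$, $M_0$, $n$, $k$ and $\alpha$.

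Finally, if $T_{\max}<\infty$, letting $T\nearrow T_{\max}$ in the above argument gives uniform $C^\infty$ bounds on $s(\cdot,t)$ up to $t=T_{\max}$, so $s(\cdot,t)$ converges smoothly to a strictly convex support function $s_{T_{\max}}$ on $\mathbb{S}^n$. Applying the short-time existence result (noted after \eqref{flow-gauss}) with $M_{T_{\max}}$ as new initial data extends the flow past $T_{\max}$, contradicting maximality. Therefore $T_{\max}=\infty$, and strict convexity on all of $[0,\infty)$ then follows from Lemma \ref{s5:lem1}. The main obstacle is purely the pinching of the principal curvatures from above: once this is obtained from the combination of Lemma \ref{s5:lem1} and Theorem \ref{Ek-ub}, the concavity of $\Psi$ makes the remaining regularity theory routine.
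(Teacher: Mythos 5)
Your proposal follows essentially the same route as the paper: obtain a time-dependent lower curvature bound from Lemma \ref{s5:lem1}, combine it with the uniform upper bound on $E_k$ from Theorem \ref{Ek-ub} to pinch all principal curvatures on any finite interval (the paper uses the single product $\kappa_{n-k+1}\cdots\kappa_n\geq\varepsilon^{k-1}\kappa_n$, whereas you collect all terms containing a given $\kappa_j$, giving the same conclusion), and then invoke concavity of $\Psi$ in the Gauss-map parametrisation together with Krylov--Safonov/Evans--Krylov and Schauder to bootstrap and continue past any finite time. The paper simply cites \cite{Cab-Sin2010,And2004,McC2005} for this regularity step where you spell it out, but the argument is the same.
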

\proof
As shown in Lemma \ref{s5:lem1}, $M_t$ is strictly convex for all $t<T_{\max}$ and the principal curvatures are bounded below by a positive constant $\varepsilon$ (which may depend on $T_{\max}$). Theorem \ref{Ek-ub} gives us a uniform upper bound on $E_k$.   By the definition \eqref{eq:defEk} we have (ordering the principal curvatures in increasing order)
\begin{equation*}
  E_k\geq {n\choose k}^{-1}\kappa_{n-k+1}\cdots\kappa_n~\geq ~{n\choose k}^{-1}\varepsilon^{k-1}\kappa_n.
\end{equation*}
Thus the principal curvatures are uniformly bounded from above on $[0,T_{\max})$. Then we can argue as \cite[\S 6]{Cab-Sin2010} using the procedure in \cite{And2004,McC2005} to derive estimates on all higher derivatives of the curvature on $[0,T_{\max})$, and a standard continuation argument then shows that $T_{\max}=+\infty$.
\endproof

\section{Hausdorff Convergence}\label{sec:Haus}
In this section, we will prove that the flow \eqref{flow-VMCF} converges to a sphere in Hausdorff sense as $t\to\infty$.
Denote
\begin{equation*}
  \bar{E}_k=~\frac 1{|M_t|}\int_{M_t}E_kd\mu_t=~\frac{V_{n-k}(\Omega_t)}{V_n(\Omega_t)}.
\end{equation*}

\begin{lem}
Let $M_0$ be a closed and strictly convex hypersurface in $\mathbb{R}^{n+1}$ and $M_t$ be a smooth solution of the flow \eqref{flow-VMCF} with global term $\phi(t)$ given by \eqref{s1:phi-3}. Then there exists a sequence of times $t_i\to\infty$ such that
\begin{align}\label{5-3}
 \int_{M_{t_i}}\left(E_k-\bar{E}_k\right)^2d\mu_{t_i}~\to&~0, \quad\mathrm{ as}~i\to\infty
\end{align}
and
\begin{equation}\label{AF-0}
  V_{n+1-k}(\Omega_{t_i})-V_{n-k}(\Omega_{t_i})\frac{V_{n+1}(\Omega_{t_i})}{V_{n}(\Omega_{t_i})}~\to~0, \quad\mathrm{ as}~i\to\infty.
\end{equation}
\end{lem}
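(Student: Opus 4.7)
The strategy is to extract time-integrability from the monotone quantities $V_{n+1-k}$ and $V_{n+1}$ identified in Proposition~\ref{prop-monot}, locate a subsequence along which a resulting covariance integral vanishes, convert this into \eqref{5-3}, and then deduce \eqref{AF-0} from \eqref{5-3} via a Minkowski identity and Cauchy--Schwarz. By Proposition~\ref{prop-monot}, $V_{n+1-k}(\Omega_t)$ is non-increasing and $V_{n+1}(\Omega_t)$ is non-decreasing, and Proposition~\ref{s3:prop-2} provides uniform two-sided bounds on both. Using \eqref{s3:Vn1}, \eqref{s3:Vn1k}, and the identity $\int_{M_t}(E_k-\bar{E}_k)d\mu_t=0$, a direct computation gives the pointwise-in-$t$ relation
\begin{equation*}
-\frac{1}{n+1-k}\frac{d}{dt}V_{n+1-k}(\Omega_t)+\frac{\bar{E}_k}{n+1}\frac{d}{dt}V_{n+1}(\Omega_t)=\int_{M_t}\bigl(E_k^{\alpha/k}-\bar{E}_k^{\alpha/k}\bigr)(E_k-\bar{E}_k)d\mu_t.
\end{equation*}
Both summands on the left are non-negative, and their time integrals over $[0,\infty)$ are finite (using the $E_k$ upper bound from Theorem~\ref{Ek-ub} to bound $\bar{E}_k$). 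Since the right-hand integrand is pointwise non-negative by comonotonicity of $x\mapsto x$ and $x\mapsto x^{\alpha/k}$, the space-time double integral is finite, so a sequence $t_i\to\infty$ exists along which the spatial integral on the right tends to zero.

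Next I pass from this covariance to the $L^2$ statement \eqref{5-3}. Using $E_k\le M$ (Theorem~\ref{Ek-ub}), elementary calculus shows that for all $x,y\in[0,M]$ one has $|x^{\alpha/k}-y^{\alpha/k}|\ge (\alpha/k)M^{\alpha/k-1}|x-y|$ when $\alpha/k\le 1$, and $(x^{\alpha/k}-y^{\alpha/k})(x-y)\ge |x-y|^{1+\alpha/k}$ when $\alpha/k\ge 1$ (the latter via the elementary inequality $x^p-y^p\ge(x-y)^p$ for $x\ge y\ge 0$ and $p\ge 1$). In the first regime the covariance already dominates $c\int_{M_{t_i}}(E_k-\bar{E}_k)^2 d\mu_{t_i}$, giving \eqref{5-3} directly. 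In the second regime one obtains only $\int_{M_{t_i}}|E_k-\bar{E}_k|^{1+\alpha/k}d\mu_{t_i}\to 0$; Jensen's inequality on the probability space $(M_{t_i},d\mu_{t_i}/|M_{t_i}|)$, together with the uniform upper bound on $|M_t|$ from Proposition~\ref{s3:prop-2}, then upgrades this $L^{1+\alpha/k}$-smallness to the $L^2$-smallness required for \eqref{5-3}.

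Finally, to deduce \eqref{AF-0}, translate so that the origin lies inside $\Omega_{t_i}$, so that the support function $u=\langle X,\nu\rangle$ satisfies $0\le u\le \rho_+(\Omega_{t_i})\le R_2$. The classical Minkowski identities give $\int_{M_t}u\,d\mu_t=V_{n+1}(\Omega_t)$ and $\int_{M_t}u E_k\,d\mu_t=V_{n+1-k}(\Omega_t)$; combining with $\bar{E}_k=V_{n-k}/V_n$ yields
\begin{equation*}
V_{n+1-k}(\Omega_{t_i})-\frac{V_{n-k}(\Omega_{t_i})V_{n+1}(\Omega_{t_i})}{V_n(\Omega_{t_i})}=\int_{M_{t_i}}u(E_k-\bar{E}_k)d\mu_{t_i}.
\end{equation*}
Cauchy--Schwarz with the uniform bounds on $u$ and $|M_t|$ bounds the absolute value of the left-hand side by $C\bigl(\int_{M_{t_i}}(E_k-\bar{E}_k)^2 d\mu_{t_i}\bigr)^{1/2}$, so \eqref{AF-0} follows from \eqref{5-3} along the same subsequence. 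The principal obstacle is the second step: the covariance produced in the first step directly controls $(E_k-\bar{E}_k)^2$ only when $\alpha/k\le 1$, since for $\alpha/k>1$ pointwise control would require a positive lower bound on $E_k$ that is not available at this stage; interpolating through $L^{1+\alpha/k}$ by Jensen circumvents this.
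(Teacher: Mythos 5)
Your proof is correct, and it follows the same overall blueprint as the paper: locate a subsequence $t_i\to\infty$ along which the covariance $\int_{M_{t_i}}\bigl(E_k^{\alpha/k}-\bar E_k^{\alpha/k}\bigr)\bigl(E_k-\bar E_k\bigr)\,d\mu_{t_i}$ tends to zero, upgrade this to the $L^2$ statement \eqref{5-3}, and then derive \eqref{AF-0} from the Minkowski identity $\int_{M_t}u\,(E_k-\bar E_k)\,d\mu_t = V_{n+1-k}-V_{n-k}V_{n+1}/V_n$ and Cauchy--Schwarz. The paper extracts the subsequence by noting that ${\mathcal I}_{n+1-k}(\Omega_t)$ is monotone non-increasing and bounded below by $1$, whereas you exhibit the non-negative quantity $-\tfrac1{n+1-k}\dot V_{n+1-k}+\tfrac{\bar E_k}{n+1}\dot V_{n+1}$ as a sum of two non-negative terms with finite time integral; these are essentially equivalent in content. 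Your treatment of the small-exponent case $\alpha/k\le 1$ (mean value theorem with $\xi^{\alpha/k-1}\ge M^{\alpha/k-1}$) is the same as the paper's integral-representation argument. The one step where you genuinely depart from the paper is $\alpha/k>1$: the paper asserts the pointwise bound $(E_k^{\alpha/k}-\bar E_k^{\alpha/k})(E_k-\bar E_k)\geq\tfrac{\alpha}{k}\bar E_k^{\alpha/k-1}(E_k-\bar E_k)^2$, which does not actually hold pointwise on $\{E_k<\bar E_k\}$ --- for a convex increasing $f$ the mean value theorem gives $(f(x)-f(y))(x-y)\ge f'\bigl(\min(x,y)\bigr)(x-y)^2$, not $f'(y)(x-y)^2$, and $E_k$ has no pointwise lower bound at this stage. (The conclusion \eqref{5-2} can still be rescued by splitting $M_t$ into $\{E_k\ge\bar E_k/2\}$ and its complement and using the lower bound on $\bar E_k$, but the paper's stated derivation is imprecise.) Your alternative, $(x^p-y^p)(x-y)\ge|x-y|^{p+1}$ for $p\ge1$, is symmetric in $x,y$ and requires no lower bound on $\min(x,y)$; combined with Jensen on the normalised surface measure to pass from $L^{1+\alpha/k}$-smallness to $L^2$-smallness (using the uniform two-sided bounds on $|M_t|$), it sidesteps the issue cleanly. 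This is a small but worthwhile improvement over the argument as written in the paper.
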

\proof
Proposition \ref{prop-monot} says that the isoperimetric ratio ${\mathcal I}_{n+1-k}(\Omega_t)$ is monotone non-increasing along the flow \eqref{flow-VMCF} with $\phi(t)$ given by \eqref{s1:phi-3}. Since
${\mathcal I}_{n+1-k}(\Omega_t)\geq 1$
for all $t$, there exists a sequence of times $t_i\to\infty$ such that
\begin{equation*}
  \frac d{dt}\bigg|_{t_i}{\mathcal I}_{n+1-k}(\Omega_t)~\to~0,\quad\mathrm{ as}~i\to\infty.
\end{equation*}
Then from the proof of Proposition \ref{prop-monot}, the Jensen's inequality \eqref{s3:Jensen} approaches equality as $t_i\to\infty$, or equivalently
\begin{align}\label{5-1}
\int_{M_{t_i}}E_k^{\alpha/k}\left(E_k-\bar{E}_k\right)d\mu_{t_i}=\int_{M_{t_i}}\left(E_k^{\alpha/k}-\bar{E}_k^{\alpha/k}\right)\left(E_k-\bar{E}_k\right)d\mu_{t_i}~\to~0
\end{align}
as $i\to\infty$.
Now we observe that for $\alpha\geq k$, the convexity of $z\mapsto z^{\alpha/k}$ implies that
$$
\left(E_k^{\alpha/k}-\bar E_k^{\alpha/k}\right)\left(E_k-\bar E_k\right) \geq \frac{\alpha}{k}\bar E_k^{\alpha/k-1}\left(E_k-\bar E_k\right)^2.
$$
On the other hand, if $0<\alpha<k$ then
\begin{align*}
(E_k^{\alpha/k}-\bar E_k^{\alpha/k})(E_k-\bar E_k) &= \frac{\alpha}{k}\left(\int_0^1((1-s)E_k+s\bar E_k)^{\alpha/k-1}\,ds\right)(E_k-\bar E_k)^2\\
&\geq \frac{\alpha}{k}\left(\sup_{M_t}E_k\right)^{\alpha/k-1}\left(E_k-\bar E_k\right)^2.
\end{align*}
Since $E_k$ is bounded above by Theorem \ref{Ek-ub} and $\bar E_k$ is bounded below by Corollary \ref{cor-phi-bd}, we conclude in either case that
\begin{equation}\label{5-2}
  \left(E_k^{\alpha/k}-\bar{E}_k^{\alpha/k}\right)\left(E_k-\bar{E}_k\right)~\geq~C\left(E_k-\bar{E}_k\right)^2,
\end{equation}
so that \eqref{5-3} follows immediately from \eqref{5-1}.

For each time $t_i$, let $p_{t_i}$ be the center of the inball $B_{p_{t_i}}(\rho_-(\Omega_{t_i}))$ of $M_{t_i}$. By the estimate \eqref{s3:radius}, the support function $u_{t_i}=\langle X-p_{t_i},\nu\rangle$ of $M_{t_i}$ with respect to $p_{t_i}$ satisfies $|u_{t_i}|\leq 2R_2$ for all $t_i$. The H\"{o}lder inequality and \eqref{5-3} then imply that
 \begin{equation*}
   \int_{M_{t_i}}\left(E_k-\bar{E}_k\right)u_{t_i}~d\mu_{t_i}~\to~0,\quad \mathrm{as}~ i\to\infty.
 \end{equation*}
The estimate \eqref{AF-0} follows since the Minkowski formula gives that
\begin{align*}
 V_{n+1-k}(\Omega_{t_i})-V_{n-k}(\Omega_{t_i})\frac{V_{n+1}(\Omega_{t_i})}{V_{n}(\Omega_{t_i})}=&~\int_{M_{t_i}}E_{k-1}d\mu_{t_i}-\frac {(n+1)|\Omega_{t_i}|}{|M_{t_i}|}\int_{M_{t_i}}E_kd\mu_{t_i}\\
 =&~\int_{M_{t_i}}\left(E_k-\bar{E}_k\right)u_{t_i}~d\mu_{t_i}.
\end{align*}
\endproof
\begin{rem}
Note that the Alexandrov-Fenchel inequality \eqref{AF-1} implies that
\begin{equation*}
  V_{n+1-k}(\Omega)-V_{n-k}(\Omega)\frac{V_{n+1}(\Omega)}{V_{n}(\Omega)}~\geq ~0
\end{equation*}
for any convex body $\Omega$.
\end{rem}

\begin{lem}\label{s6:lem1}
Let $M_0$ be a closed and strictly convex hypersurface in $\mathbb{R}^{n+1}$ and $M_t$ be the smooth solution of the flow \eqref{flow-VMCF} with global term $\phi(t)$ given by \eqref{s1:phi-3}. Then there exists a sequence of times $t_i\to\infty$ such that $M_{t_i}$ converges to a round sphere $S^n(\bar{r})$ in Hausdorff sense as $t_i\to\infty$, where the radius $\bar{r}$ is determined by $\Omega_0$.
\end{lem}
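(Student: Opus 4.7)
My plan is to extract a Hausdorff limit from the sequence $\{\Omega_{t_i}\}$ produced above and use the $L^2$-smallness \eqref{5-3} to identify that limit as a ball via Schneider's generalized Alexandrov theorem (Theorem \ref{s2:thm-meas}).

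First I control the location of the bodies. By Proposition \ref{s3:prop-2}, each $\Omega_{t_i}$ contains a ball of radius $R_1$, centered at some point $p_{t_i}$, and is contained in a ball of radius $R_2$. Setting $\tilde\Omega_{t_i}=\Omega_{t_i}-p_{t_i}$, one has $B(0,R_1)\subset\tilde\Omega_{t_i}\subset B(0,2R_2)$ for every $i$. The Blaschke selection theorem then lets me pass to a further subsequence (still denoted $t_i$) along which $\tilde\Omega_{t_i}\to\hat\Omega$ in Hausdorff distance, with $B(0,R_1)\subset\hat\Omega\subset B(0,2R_2)$; in particular $\hat\Omega$ has nonempty interior.

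The second step derives the curvature-measure identity for $\hat\Omega$. For any bounded continuous $f:\mathbb R^{n+1}\to\mathbb R$, decompose
\begin{equation*}
\int_{\tilde M_{t_i}} f E_k\,d\mu_{t_i}=\bar E_k(t_i)\int_{\tilde M_{t_i}} f\,d\mu_{t_i}+\int_{\tilde M_{t_i}} f(E_k-\bar E_k)\,d\mu_{t_i}.
\end{equation*}
The Cauchy-Schwarz inequality applied to the last term, together with the uniform bound $|\tilde M_{t_i}|\leq c_2$ from Proposition \ref{s3:prop-2} and \eqref{5-3}, shows that this term tends to $0$ as $i\to\infty$. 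By the representation \eqref{s2:cur-meas}, the left-hand side equals $\int f\,d\mathcal C_{n-k}(\tilde\Omega_{t_i})$ and the first integral on the right equals $\int f\,d\mathcal C_n(\tilde\Omega_{t_i})$. Using the weak continuity of curvature measures under Hausdorff convergence (Section \ref{sec:2-4}), together with $\bar E_k(t_i)=V_{n-k}(\tilde\Omega_{t_i})/V_n(\tilde\Omega_{t_i})\to\bar c:=V_{n-k}(\hat\Omega)/V_n(\hat\Omega)>0$ by Lemma \ref{s2:lem2}, passing to the limit yields
\begin{equation*}
\int f\,d\mathcal C_{n-k}(\hat\Omega)=\bar c\int f\,d\mathcal C_n(\hat\Omega)
\end{equation*}
for every bounded continuous $f$. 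Standard approximation (both $\mathcal C_{n-k}(\hat\Omega,\cdot)$ and $\mathcal C_n(\hat\Omega,\cdot)$ are finite Borel measures supported in a bounded set) then promotes this to $\mathcal C_{n-k}(\hat\Omega,\beta)=\bar c\,\mathcal C_n(\hat\Omega,\beta)$ for every Borel set $\beta\subset\mathbb R^{n+1}$.

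Applying Theorem \ref{s2:thm-meas} with $m=n-k\in\{0,\dots,n-1\}$ forces $\hat\Omega$ to be a ball $B(0,\bar r)$ for some $\bar r>0$. The radius is pinned down by the flow: since $G(r_{n+1-k}(\Omega_t),r_{n+1}(\Omega_t))$ is constant in $t$ and the mixed volumes are continuous under Hausdorff convergence (Lemma \ref{s2:lem2}), we obtain $G(\bar r,\bar r)=G(r_{n+1-k}(\Omega_0),r_{n+1}(\Omega_0))$. Because $G$ is non-decreasing in each variable with non-vanishing gradient, the diagonal map $r\mapsto G(r,r)$ is strictly increasing, so $\bar r$ is uniquely determined by $\Omega_0$. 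Hence $\tilde M_{t_i}\to S^n(\bar r)$ in Hausdorff distance. The main technical point is the second step: interchanging the Hausdorff limit of $\tilde\Omega_{t_i}$ with the $L^2$-smallness of $E_k-\bar E_k$ to produce a clean pointwise identity between measures, which rests critically on weak continuity of curvature measures and on the uniform area bound from Proposition \ref{s3:prop-2}.
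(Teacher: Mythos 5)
Your argument is correct and follows essentially the same route as the paper's general (all-$k$) proof: Blaschke selection to extract a Hausdorff limit $\hat\Omega$, the decomposition of $\int f\,d\mathcal{C}_{n-k}$ against $\int f\,d\mathcal{C}_n$ using the $L^2$-decay \eqref{5-3} together with weak continuity of curvature measures and Lemma \ref{s2:lem2}, and then Schneider's Theorem \ref{s2:thm-meas} to conclude $\hat\Omega$ is a ball. Your write-up is in fact a touch more careful than the paper's at two points---recentering the bodies before invoking Blaschke selection, and explicitly pinning down $\bar r$ via strict monotonicity of $r\mapsto G(r,r)$---but the decomposition and key lemmas are the same.
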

\proof
(1). We first give an argument for the case $k=n$:  In this case the stability result of Lemma \ref{s2:stab-1} can be applied to prove the Hausdorff convergence.  We note that
\begin{align*}
V_1V_n-V_0V_{n+1}  &= \frac{V_1^2V_n^2-V_0^2V_{n+1}^2}{V_1V_n+V_0V_{n+1}}\\
&= \frac{V_n^2(V_1^2-V_0V_2)+V_0(V_2V_n^2-V_0V_{n+1}^2)}{V_1V_n+V_0V_{n+1}}\\
&\geq \frac{V_n^2}{V_1V_n+V_0V_{n+1}}\left(V_1^2-V_0V_2\right).
\end{align*}
It follows from \eqref{AF-0} that $V_1^2(\Omega_{t_i})-V_0V_2(\Omega_{t_i})\to 0$ as $i\to\infty$.  By the stability inequality \eqref{AF-2} there is a sequence of balls $B_i$ such that
$$
d_{\mathcal H}(\Omega_{t_i},B_i)\to 0.
$$
Here the diameter of $B_i$ equals $r_1(\Omega_{t_i})$ (which is bounded above and below by \eqref{s3:radius}, and the centre of $B_i$ is the Steiner point of $\Omega_{t_i}$, which remains in a bounded region by the Alexandrov reflection argument of \cite{Chow-Gul96}.  Therefore we can pass to a further subsequence along which $B_i$ converges in Hausdorff distance to a fixed ball $B$, and then we have
$$
d_{\mathcal H}(\Omega_{t_i},B)\to 0.
$$
We remark that this argument also applies for $k<n$ if Conjecture 7.6.13 in \cite{Schn} is true, since we have $E_k$ bounded above.  In particular, in the case where $\Omega_0$ is antipodally symmetric, then \cite{Schn}*{Theorem 7.6.20} provides the required statement, so this argument applies to prove Hausdorff convergence to a ball in this case for any $k$.

(2).  Next we provide a different argument which applies for all $k$, using the curvature measures ${\mathcal C}_m$ introduced in Section \ref{sec:2-4}:  By the estimate on the outer-radius of $\Omega_t$ in \eqref{s3:radius}, the Blaschke selection theorem (see Theorem 1.8.7 of \cite{Schn}) implies that there exists a subsequence of time $t_i$ and a convex body $\hat{\Omega}$ such that $\Omega_{t_i}$ converges to $\hat{\Omega}$ in Hausdorff sense as $t_i\to\infty$. As each $\Omega_{t_i}$ has inner radius $\rho_-(\Omega_{t_i})\geq R_1$, the limit convex body $\hat{\Omega}$ has positive inner radius. Without loss of generality, we may assume that the sequence $t_i$ is the same sequence such that \eqref{5-3} holds.

We will show that ${\mathcal C}_{n-k}(\hat\Omega,.) = c\,\mathcal C_{n}(\hat\Omega,.)$ where $c=\frac{V_{n-k}(\hat\Omega)}{V_n(\hat\Omega)}$.

The weak continuity of ${\mathcal C}_m$ is equivalent to the statement that $\int f d{\mathcal C}_m(\Omega_i)$ converges to $\int f d{\mathcal C}_m(\Omega)$ whenever $f$ is a bounded continuous function on ${\mathbb R}^{n+1}$ and $\Omega_i$ is a sequence of convex sets converging to $\Omega$ in Hausdorff distance.  In particular we have that $\int f d{\mathcal C}_{n-k}(\Omega_{t_i})$ converges to $\int f d{\mathcal C}_{n-k}(\hat\Omega)$, and $\int f d{\mathcal C}_{n}(\Omega_{t_i})$ converges to $\int f d{\mathcal C}_{n}(\hat\Omega)$, as $i\to\infty$.  Since $\Omega_{t_i}$ is smooth and uniformly convex, we have for any bounded continuous $f$
\begin{align*}
\left|\int f d{\mathcal C}_{n-k}(\Omega_{t_i}) -c \int f d{\mathcal C}_n(\Omega_{t_i})\right|
&= \left|\int_{M_{t_i}} f E_k d{\mathcal H}^n-\int_{M_{t_i}} f c d{\mathcal H}^n\right|\\
&\leq \sup |f| \int_{M_{t_i}}|E_k-c| d{\mathcal H}^n\\
&\leq \sup|f|\int_{M_{t_i}}|E_k-\bar E_k|d{\mathcal H}^n\\
&\quad\null+\sup|f|V_n(\Omega_{t_i})\left(\frac{V_{n-k}(\Omega_{t_i})}{V_n(\Omega_{t_i})}-\frac{V_{n-k}(\hat\Omega)}{V_n(\hat\Omega)}\right).
\end{align*}
The left-hand side converges to $\left|\int f d{\mathcal C}_{n-k}(\hat\Omega)-c\int f d{\mathcal C}_n(\hat\Omega)\right|$ by the weak continuity of the curvature measures, while the first term on the right-hand side converges to zero by \eqref{5-3}, and the second does also by Lemma \ref{s2:lem2}.  It follows that $\int fd{\mathcal C}_{n-k}(\hat\Omega) = c\int fd\mathcal C_n(\hat\Omega)$ for all bounded continuous functions $f$, and therefore that ${\mathcal C}_{n-k}(\hat\Omega,.) = c\,{\mathcal C}_n(\hat\Omega,.)$ as claimed.

By Theorem \ref{s2:thm-meas}, $\hat\Omega$ is a ball.
This completes the proof of Lemma \ref{s6:lem1}.

\endproof

Now we show that the whole family $M_t$ converges to a sphere as $t\to\infty$.

\begin{thm}\label{s6:thm-hd}
Let $M_0$ be a closed and strictly convex hypersurface in $\mathbb{R}^{n+1}$. Then the solution $M_{t}$ of \eqref{flow-VMCF} converges to a round sphere $S^n_{\bar{r}}(p)$ in Hausdorff distance as $t\to\infty$.
\end{thm}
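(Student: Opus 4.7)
The plan is to upgrade the subsequential Hausdorff convergence established in Lemma \ref{s6:lem1} to convergence of the whole family $\{\Omega_t\}_{t \geq 0}$. First I would pin down the limit radius. Proposition \ref{prop-monot} shows that $V_{n+1}(\Omega_t)$ is non-decreasing and $V_{n+1-k}(\Omega_t)$ is non-increasing, and Proposition \ref{s3:prop-2} provides uniform upper and lower bounds, so both quantities have limits as $t\to\infty$. The subsequential convergence $\Omega_{t_i}\to B(\hat p,\bar r)$ together with the continuity of mixed volumes in Hausdorff distance (Lemma \ref{s2:lem2}) forces these limits to be $\omega_n\bar r^{n+1}$ and $\omega_n\bar r^{n+1-k}$ respectively, and hence $r_{n+1}(\Omega_t),\,r_{n+1-k}(\Omega_t)\to\bar r$ for the full family. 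Since $\phi(t)$ was chosen in \eqref{s1:phi-3} precisely to preserve $G(r_{n+1-k},r_{n+1})$, passing to the limit gives $G(\bar r,\bar r)=G(r_{n+1-k}(\Omega_0),r_{n+1}(\Omega_0))$, uniquely determining $\bar r$ from $\Omega_0$.

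Next I would show that every subsequential Hausdorff limit of $\{\Omega_t\}$ is a ball of radius $\bar r$. The bounds on $\rho_\pm$ from Proposition \ref{s3:prop-2}, combined with the bound on the Steiner point from the Chow--Gulliver Alexandrov reflection argument (as used in part (1) of the proof of Lemma \ref{s6:lem1}), keep $\{\Omega_t\}$ inside a fixed compact subset of convex bodies, so Blaschke selection applies: any sequence $s_j\to\infty$ has a subsequence along which $\Omega_{s_j}$ converges to some $\tilde\Omega$ in Hausdorff distance. Continuity of mixed volumes forces $V_{n+1}(\tilde\Omega)=\omega_n\bar r^{n+1}$ and $V_{n+1-k}(\tilde\Omega)=\omega_n\bar r^{n+1-k}$, so $\mathcal I_{n+1-k}(\tilde\Omega)=1$, and the equality case of \eqref{AF-k=n+1} forces $\tilde\Omega$ to be a ball of radius $\bar r$.

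The remaining and hardest step is to rule out drift of the center, i.e.\ to show that the whole family converges to a single ball. My approach is to invoke the stability form of the Alexandrov--Fenchel inequality in Lemma \ref{s2:stab-1}. The comparisons between isoperimetric ratios recalled in Section 3 propagate $\mathcal I_{n+1-k}(\Omega_t)\to 1$ to $\mathcal I_1(\Omega_t)\to 1$, and with the uniform bound on $V_1$ from Proposition \ref{s3:prop-2} this yields $V_1(\Omega_t)^2-V_0V_2(\Omega_t)\to 0$; then \eqref{AF-2} gives $d_{\mathrm H}(\Omega_t,B_{\Omega_t})\to 0$, where $B_{\Omega_t}$ is the ball with the Steiner point and mean width of $\Omega_t$. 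Since the mean width converges to $2\bar r$, Hausdorff convergence of $\Omega_t$ reduces to convergence of the Steiner point $s(\Omega_t)\in\mathbb R^{n+1}$. The $\omega$-limit set of the continuous curve $t\mapsto s(\Omega_t)$ is compact and connected, and by the previous paragraph equals the set of centers of subsequential Hausdorff limits; showing this $\omega$-limit set is a single point is the technical core. One natural route is an integrable rate estimate for $\frac{d}{dt}s(\Omega_t)=-\frac{n+1}{\omega_n}\int_{S^n} E_k^{\alpha/k}(z,t)\,z\,d\sigma$ (using $\int_{S^n} z\,d\sigma=0$), combined with the $L^2$-integrability of $E_k-\bar E_k$ that is extracted from the monotone decrease of $\mathcal I_{n+1-k}$ along the lines of the proof of Lemma \ref{s6:lem1}, so as to conclude $\int^\infty|ds/dt|\,dt<\infty$ and hence convergence of $s(\Omega_t)$. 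Failing that, one may defer the uniqueness of the center to the smooth convergence established in the subsequent sections, where a uniform positive lower bound on $E_k$ together with a standard linearization argument pins the center down.
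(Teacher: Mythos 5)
Your first two steps agree with the paper: the limits of $V_{n+1}$ and $V_{n+1-k}$ pin down $\bar r$, and the stability form of the Alexandrov--Fenchel inequality reduces the problem to showing that the Steiner point $p(t)$ converges. You have also correctly identified that ruling out drift of the center is the crux. However, neither of the two completions you propose for that step is sound.

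Route (a) has an integrability gap. What the monotonicity of ${\mathcal I}_{n+1-k}$ gives you is $\int_0^\infty\int_{M_t}\bigl(E_k^{\alpha/k}-\bar E_k^{\alpha/k}\bigr)\bigl(E_k-\bar E_k\bigr)\,d\mu_t\,dt<\infty$, hence by \eqref{5-2} that $\int_0^\infty\|E_k-\bar E_k\|_{L^2(M_t)}^2\,dt<\infty$. But $|ds/dt|$ is controlled by an $L^1$-type spatial norm of $E_k^{\alpha/k}-\overline{E_k^{\alpha/k}}$, and the $L^1$-in-time integral of this quantity is \emph{not} implied by the $L^2$-in-time integral of the squared $L^2$ norm: $\int_0^\infty g(t)^2\,dt<\infty$ does not give $\int_0^\infty g(t)\,dt<\infty$. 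There is no integrable decay rate available at this stage of the argument, so the claim $\int^\infty|ds/dt|\,dt<\infty$ does not follow. Route (b) is circular: the smooth convergence of Section~\ref{sec:Smooth} (in particular the lower bound on $E_k$ in Proposition~\ref{s7:prop1}) is \emph{derived from} the Hausdorff convergence to a single ball, so it cannot be invoked to prove it.

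The paper closes the gap by a different and more robust mechanism: an Alexandrov reflection argument. For each direction $z\in S^n$ one defines $\lambda^+(z,t)$ and $\lambda^-(z,t)$ as the extreme reflection hyperplane positions; the Chow--Gulliver argument shows these are monotone in $t$ (non-increasing, resp.\ non-decreasing), hence each converges. A short geometric lemma then shows that whenever $d_{\mathcal H}(\Omega_t,B_{\bar r}(p(t)))<\varepsilon$, the Steiner point satisfies $|\lambda^\pm(z,t')-p(t)\cdot z|\le 2\sqrt{\bar r\varepsilon}$ for all $t'>t$; since $\lambda^\pm$ converge and $\varepsilon\to 0$, the Steiner point is squeezed and converges. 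This replaces your quantitative rate estimate with a monotonicity argument, which is exactly what avoids needing any integrable decay of the speed deviation. If you want to repair your proposal, this monotonicity-in-each-direction device is the missing idea.
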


\proof
By Proposition \ref{prop-monot}, $V_{n+1}(\Omega_t)$ is non-decreasing and $V_{n+1-k}(\Omega_t)$ is non-increasing.  It follows also that $V_{n+1}(\Omega_t)$ is bounded above, since $r_{n+1}(\Omega_t)\leq r_{n+1-k}(\Omega_t)\leq r_{n+1-k}(\Omega_0)$ by the Alexandrov-Fenchel inequality \eqref{s3:Il-2} and the monotonicity of $V_{n+1-k}(\Omega_t)$.    Similarly $V_{n+1-k}(\Omega_t)$ is bounded from below.  It follows that both converge as $t\to\infty$, and by Lemma \ref{s6:lem1} we have that ${\mathcal I}_{n+1-k}(\Omega_{t_i})\to 1$ as $i\to\infty$ and hence ${\mathcal I}_{n+1-k}(\Omega_t)\to 1$ as $t\to\infty$.  It follows that there exists $\bar r>0$ such that $r_{n+1}(\Omega_t)\to \bar r$ and $r_{n+1-k}(\Omega_t)\to \bar r$ as $t\to\infty$.  It follows from the stability estimate (7.124) in \cite{Schn} that $d_{\mathcal H}(\Omega_{t},B_{\bar r}(p(t)))\to 0$, where $p(t)$ is the Steiner point of $\Omega_t$.

To complete the argument we control the Steiner point $p(t)$ using an Alexandrov reflection argument:    For each $z\in S^n$ and $\lambda\in\RR$ we define a reflection $R_{z,\lambda}$ by $R_{z,\lambda}(x) = x-2(x\cdot z-\lambda)z$, which reflects in the hyperplane $\{x\cdot z=\lambda\}$.

We define $S^+(z,t) = \left\{\lambda\in\RR:\ R_{z,\lambda}(\Omega_t\cap\{x\cdot z>\lambda\})\subset\Omega\right\}$ and $\lambda^+(z,t) = \inf S^+(z,t)$.  The Alexandrov reflection argument \cite{Chow-Gul96} implies that $\lambda^+(z,t)$ is non-increasing in $t$ for each $z$.  Note also that $\lambda^-(z):=-\lambda^+(-z)$ satisfies $\lambda^-(z)\leq \lambda_+(z)$ and $\lambda^-(z,t)$ is non-decreasing in $t$ for each $z$.

\begin{lem}
If $d_{\mathcal H}(\Omega(t),B_{\bar r}(p(t)))<\varepsilon$, then $\lambda^+(z,t)\leq p(t)\cdot z+2\sqrt{\varepsilon \bar r}$.
\end{lem}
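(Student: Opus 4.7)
The claim is a purely geometric statement about convex bodies close to a ball in Hausdorff distance. After translating so that $p(t)=0$ and rotating so that $z=e_{n+1}$, it reduces to $\lambda^+(e_{n+1},t)\le 2\sqrt{\bar r\varepsilon}$, which I plan to prove by verifying that every $\lambda\ge 2\sqrt{\bar r\varepsilon}$ belongs to $S^+(e_{n+1},t)$. The first step is to reformulate the reflection condition one chord at a time: writing each nonempty vertical chord of $\Omega_t$ as $[\mu^-(x'),\mu^+(x')]$ for $x'$ in the horizontal projection of $\Omega_t$, convexity shows that $R_{e_{n+1},\lambda}(\Omega_t\cap\{x_{n+1}>\lambda\})\subset\Omega_t$ holds if and only if $\tfrac12(\mu^+(x')+\mu^-(x'))\le\lambda$ for every $x'$ with $\mu^+(x')>\lambda$.

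Next I would extract the two-sided ball inclusion $B_{\bar r-\varepsilon}(0)\subset\Omega_t\subset B_{\bar r+\varepsilon}(0)$ from the Hausdorff hypothesis. The outer inclusion is immediate; for the inner one, a short separating-hyperplane argument using convexity works: if some $y\in B_{\bar r-\varepsilon}(0)$ were missing from $\Omega_t$, pushing $y$ a distance $\varepsilon$ along the outward normal of a hyperplane separating $y$ from $\Omega_t$ would produce a point still inside $B_{\bar r}(0)$ but at distance strictly greater than $\varepsilon$ from $\Omega_t$, contradicting $B_{\bar r}(0)\subset\Omega_t+\varepsilon B$.

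The key observation is then that any $x'$ contributing to the reflection condition automatically lies in the disc $|x'|\le\bar r-\varepsilon$: if $\mu^+(x')>\lambda\ge 2\sqrt{\bar r\varepsilon}$, the outer ball gives $|x'|^2+\lambda^2\le(\bar r+\varepsilon)^2$, so $|x'|^2\le(\bar r+\varepsilon)^2-4\bar r\varepsilon=(\bar r-\varepsilon)^2$. For such $x'$ the inner ball yields $\mu^-(x')\le-\sqrt{(\bar r-\varepsilon)^2-|x'|^2}$ while the outer gives $\mu^+(x')\le\sqrt{(\bar r+\varepsilon)^2-|x'|^2}$, so rationalizing the difference
\[
\frac{\mu^+(x')+\mu^-(x')}{2}\le\frac{2\bar r\varepsilon}{\sqrt{(\bar r+\varepsilon)^2-|x'|^2}+\sqrt{(\bar r-\varepsilon)^2-|x'|^2}}\le\frac{2\bar r\varepsilon}{2\sqrt{\bar r\varepsilon}}=\sqrt{\bar r\varepsilon}\le\lambda,
\]
where in the middle inequality I used that the denominator is minimized over $|x'|\le\bar r-\varepsilon$ at the boundary value $|x'|=\bar r-\varepsilon$, giving $2\sqrt{\bar r\varepsilon}$.

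The delicate point is precisely this interplay: both ball inclusions are needed, and the observation that the $x'$ relevant to the reflection condition stay inside the disc of radius $\bar r-\varepsilon$ is what prevents the chord-midpoint bound from degenerating as $|x'|\to\bar r-\varepsilon$. Everything else is routine, and the conclusion follows after undoing the translation and rotation.
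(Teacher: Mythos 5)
Your proof is correct, and it takes a somewhat different route from the paper's. The paper reduces to two-dimensional slices $\Omega_P = \Omega_t\cap\bigl(p+\mathrm{span}\{z,e\}\bigr)$, parametrizes each slice by the height $s$ in the $z$-direction via concave half-width functions $u_P^{\pm}(s)$, and uses concavity (together with the values at $s=0$ and $s=2\sqrt{\bar r\varepsilon}$ forced by the ball sandwich) to show that reflection at height $2\sqrt{\bar r\varepsilon}$ maps the top of the slice into the slice, handling $s\in(2\sqrt{\bar r\varepsilon},4\sqrt{\bar r\varepsilon}]$ and $s>4\sqrt{\bar r\varepsilon}$ as separate cases. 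You instead foliate $\Omega_t$ by full vertical chords $[\mu^-(x'),\mu^+(x')]$ over the $n$-dimensional base, observe that the reflection condition is exactly a pointwise bound on the chord midpoints, and bound the midpoints directly from the inner and outer ball inclusions by rationalizing the difference of square roots; the crucial step that any chord with top above $\lambda\ge 2\sqrt{\bar r\varepsilon}$ automatically has base $|x'|\le\bar r-\varepsilon$ (so the inner ball controls $\mu^-$) plays the role that concavity plays in the paper. Your version avoids the concavity lemma and the case split, is a single computation, and in fact yields the sharper midpoint bound $\sqrt{\bar r\varepsilon}$ (the factor $2$ in the statement is still what is needed to force $|x'|\le\bar r-\varepsilon$). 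Both arguments rest on the same two-sided inclusion $B_{\bar r-\varepsilon}(p)\subset\Omega_t\subset B_{\bar r+\varepsilon}(p)$, which the paper asserts and you verify with the separating hyperplane argument.
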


\begin{proof}
We fix $t$, and write $\Omega=\Omega_t$ and $p=p(t)$, so that by assumption $d_{\mathcal H}(\Omega, B_{\bar r}(p))<\varepsilon$.  Fix orthogonal unit vectors $z$ and $e$, and let $P=p+\text{\rm span}\{z,e\}\subset \RR^{n+1}$.   Then
$\Omega_P:=\Omega\cap P$ is a convex body in the plane $P$ which contains $\bar B_{\bar r-\varepsilon}(p)$ and is contained in $B_{\bar r+\varepsilon}(p)$.   For each $s$ we can write
$$
\Omega_P\cap\{x:\ (x-p)\cdot z=s\} = p+s z + (-u_P^-(s),u_P^+(s))e
$$
where $u_P^\pm$ are concave functions with
$$
\sqrt{(\bar r-\varepsilon)^2-s^2}\leq u_P^\pm(s)<\sqrt{(\bar r+\varepsilon)^2-s^2}
$$
for each $0\leq s<\bar r-\varepsilon$.  In particular we have $\bar r-\varepsilon< u_P^\pm(0)$, and
$$
u_P^\pm(2\sqrt{\bar r\varepsilon})<\sqrt{(\bar r+\varepsilon)^2-4\bar r\varepsilon} = \bar r-\varepsilon.
$$
It follows by concavity of $u_P^\pm$ that $u_P^{\pm}(s)>u_P(2\sqrt{\bar r\varepsilon})$ for $0\leq s\leq 2\sqrt{\bar r\varepsilon}$, and $u_P^\pm(s)<u_P^\pm(2\sqrt{\bar r\varepsilon})$ for $s>2\sqrt{\bar r\varepsilon}$.  It follows that $x=p+s z+ ye\in \Omega_P$ with $s>2\sqrt{\bar r\varepsilon}$ implies that
\begin{align*}
R_{z,p\cdot z+2\sqrt{\bar r\varepsilon}}(x) &= p+(4\sqrt{\bar r\varepsilon}-s)z+ye\\
&\in p+(4\sqrt{\bar r\varepsilon}-s)z+(-u_P^-(s),u_P^+(s))e\\
&\subset p+(4\sqrt{\bar r\varepsilon}-s)z+(-u_P^-(2\sqrt{\bar r\varepsilon}),u_P^+(2\sqrt{\bar r\varepsilon}))e\\
&\subset p+(4\sqrt{\bar r\varepsilon}-s)z+(-u_P^-(4\sqrt{\bar r\varepsilon}-s),u_P^+(4\sqrt{\bar r\varepsilon}-s))e\\
&\subset \Omega_P\subset\Omega
\end{align*}
for each $s\in[0,4\sqrt{\bar r\varepsilon}]$.  Furthermore for $x\in\Omega$ with $(x-p)\cdot z>4\sqrt{\bar r\varepsilon}$ we have
$$
R_{z,p\cdot z+2\sqrt{\bar r\varepsilon}}(x) \in B_{\bar r-\varepsilon}(p)\subset\Omega.
$$
Since $e$ is arbitrary, we have that $R_{z,p\cdot z+2\sqrt{\bar r\varepsilon}}(\Omega\cap\{(x-p)\cdot z>2\sqrt{\bar r\varepsilon}\})\subset\Omega$, and so $p\cdot z+2\sqrt{\bar r\varepsilon}\in S^+(z,t)$ and $\lambda^+(z,t)\leq p\cdot z+2\sqrt{\bar r\varepsilon}$ as claimed.
\end{proof}

It follows from the monotonicity of $\lambda_+(z,t)$ that if $d_{\bar H}(\Omega(t),B_{\bar r}(p(t)))<\varepsilon$ then for $t'>t$ we have
$$
p(t)\cdot z-2\sqrt{\bar r\varepsilon}\leq \lambda_-(z,t)\leq \lambda_-(z,t')\leq \lambda_+(z,t')\leq \lambda_+(z,t)\leq p(t)\cdot z+2\sqrt{\bar r\varepsilon}
$$
so that
$$
\left|p(t)\cdot z-\lambda_\pm(z,t')\right|<2\sqrt{\bar r\varepsilon}.
$$
It follows that $\lambda_\pm(z,t')$ is Cauchy, hence convergent, as $t'\to\infty$, and that $p(t)\cdot z$ converges to the same limit.  Since $z$ is arbitrary, this proves that $p(t)$ converges to a point $p\in\RR^{n+1}$, and we conclude that
$d_{\mathcal H}(\Omega(t),p)\to 0$ as $t\to\infty$.
\endproof

\section{Smooth convergence}\label{sec:Smooth}
We proved in the previous section the Hausdorff convergence of the solution $M_{t}$ of \eqref{flow-VMCF} to a round sphere $S^n(\bar{r})$ as $t\to\infty$. In this section, we prove that the convergence is in the $C^\infty$ sense.  Firstly, we prove the following lower bound for $E_k$ along the flow \eqref{flow-VMCF}, by adapting an idea of Smoczyk \cite[Proposition 4]{Smo98} (see also \cite{Andrews-McCoy-Zheng,McC2017}).

\begin{prop}\label{s7:prop1}
Let $M_0$ be a closed and strictly convex hypersurface in $\mathbb{R}^{n+1}$ and $M_t$, $t\in [0,\infty),$ be the smooth solution of the flow \eqref{flow-VMCF}.  Then there exists a positive constant $C=C(n,\alpha,\Omega_0)$ such that $E_k\geq C$ on $M_t$ for all $t\geq 0$.
\end{prop}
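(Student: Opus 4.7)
The plan is to adapt the barrier argument of Smoczyk, combining the Hausdorff convergence established in Theorem \ref{s6:thm-hd} with the parabolic maximum principle applied to a carefully chosen auxiliary function on the sphere in the Gauss-map parametrization. By Theorem \ref{s6:thm-hd}, I can fix a time $T_0>0$, a small $\delta>0$, and a point $p\in\RR^{n+1}$ such that, in the Gauss-map parametrization centred at $p$, the support function $s$ of $M_t$ satisfies $\bar r-\delta\le s(z,t)\le \bar r+\delta$ for all $z\in S^n$ and $t\ge T_0$. On the compact interval $[0,T_0]$ the time-dependent strict convexity estimate from Lemma \ref{s5:lem1} already provides a positive lower bound on $E_k$, so it suffices to prove a uniform positive lower bound for $t\ge T_0$.

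Fix a constant $C>\bar r+\delta$, so that $C-s$ is uniformly bounded above and below by positive constants on $S^n\times[T_0,\infty)$. I will consider a Smoczyk-type function
\[
W \;=\; E_k^{\alpha/k}(C-s)^\beta \;=\; F_*^{-\alpha}(C-s)^\beta
\]
with exponent $\beta$ to be chosen, and aim to show $\min_{S^n}W(\cdot,t)$ is bounded below by a positive constant. Since $C-s$ is trapped in a fixed compact subinterval of $(0,\infty)$, such a bound yields the desired positive lower bound on $E_k$. Using $\partial_t s=-F_*^{-\alpha}+\phi$ from \eqref{flow-gauss} together with the evolution equation \eqref{flow-Psi} for $\Psi=-F_*^{-\alpha}$, I compute $\partial_t W-\mathcal{L}W$, where $\mathcal{L}=\dot\Psi^{ij}\bar\nabla_i\bar\nabla_j$ is the linearised elliptic operator on $S^n$. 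At a spatial minimum $z_\ast$ of $W$, the condition $\bar\nabla W=0$ forces $\bar\nabla F_*^{-\alpha}$ to be a prescribed scalar multiple of $\bar\nabla s$, and the identity
\[
\mathcal{L}s \;=\; \alpha F_*^{-\alpha}-s\,\mathrm{tr}_{\bar g}\dot\Psi,
\]
which follows from $\tau_{ij}=\bar\nabla_i\bar\nabla_j s+\bar g_{ij}s$ and the $1$-homogeneity of $F_*$, reduces the product-rule terms to a purely algebraic expression in $s$, $F_*$, $\phi$ and $\mathrm{tr}_{\bar g}\dot\Psi$.

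The outcome, for a suitable choice of $\beta>0$, is expected to be a reaction inequality of the form
\[
\frac{d}{dt}W_{\min}(t) \;\ge\; A \;-\; B\,W_{\min}(t)^{q}
\]
at the spatial minimum point, with positive constants $A,B$ and some exponent $q>0$. The driving positive constant $A$ comes from the uniform lower bound $\phi\ge C_1>0$ from Corollary \ref{cor-phi-bd}, while $B$ is controlled by the upper bound $E_k\le C$ from Theorem \ref{Ek-ub}, the bounds on $s$, and the bounds on $|\bar\nabla s|$ coming from convexity. Such an inequality implies $W_{\min}(t)\ge\min\bigl\{W_{\min}(T_0),(A/B)^{1/q}\bigr\}>0$ uniformly for $t\ge T_0$, which concludes the proof. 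Once $E_k$ is bounded below, Theorem \ref{Ek-ub} together with this estimate gives uniform ellipticity, and smooth convergence to the limit sphere follows by standard regularity bootstrap applied together with the Hausdorff convergence.

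The principal technical obstacle is controlling $\mathrm{tr}_{\bar g}\dot\Psi = \alpha F_*^{-\alpha-1}\sum_i \dot F_*^i$ at a spatial minimum of $W$. At such a point $F_*=E_k^{-1/k}$ is large, so at least one eigenvalue of $\tau$ must be large, and the trace $\sum_i \dot F_*^i$ depends delicately on the full eigenvalue configuration of $\tau$. One must use the concavity of $F_*$, the Newton-MacLaurin inequalities, and the upper bound on $E_k$ from Theorem \ref{Ek-ub} to extract the correct asymptotics of this trace as $W_{\min}\to 0$, and this in turn dictates the choice of exponent $\beta$ for which the reaction inequality closes with the desired sign structure.
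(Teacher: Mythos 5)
Your plan — a Tso/Smoczyk-type auxiliary function $W = F_*^{-\alpha}(C-s)^\beta$ combined with the maximum principle in the Gauss-map parametrization — is a genuinely different route from the one taken in the paper, and unfortunately it founders on exactly the obstacle you flag but do not resolve.

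The paper does not work with a product of the speed and a height function. Instead it considers, for each late time $t_1$, the additive quantity
$$
f(z,t) = -(1+\alpha)(t-t_1)\Psi(z,t) - (1+\alpha)\!\int_{t_1}^t\phi(s)\,ds + s(z,t),
$$
and uses \eqref{flow-gauss} and \eqref{flow-Psi} to compute
$$
\partial_t f = \dot\Psi^{kl}\bar\nabla_k\bar\nabla_l f - \alpha\phi(t) + \bigl(s - (1+\alpha)(t-t_1)(\Psi+\phi)\bigr)\dot\Psi^{kl}\bar g_{kl}.
$$
The key structural feature, absent from your scheme, is that the troublesome trace term $\dot\Psi^{kl}\bar g_{kl}$ appears multiplied by the bracket $s - (1+\alpha)(t-t_1)(\Psi+\phi)$, which the paper shows remains \emph{non-negative} for a uniform waiting time $t-t_1 \le t_2-t_1$ (using the Hausdorff convergence to bound $s$ below and the upper bound on $E_k$ to bound $-\Psi$ above). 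Since $\dot\Psi^{kl}\bar g_{kl}\ge 0$ automatically, that entire term can be discarded, and one is left with $\partial_t f \ge \dot\Psi^{kl}\bar\nabla_k\bar\nabla_l f - \alpha\phi$, from which $\min f(\cdot,t)\ge s(z_1,t_1)-\alpha\varphi_{t_1}(t)$ follows immediately, giving a lower bound on $-\Psi$ of the form $(\rho_-(t_1)+\varphi_{t_1}(t)-r(t))/((1+\alpha)(t-t_1))$. The paper then shows that the numerator is bounded below by a fixed multiple of $t-t_1$ after another (uniform) waiting time $t_4-t_1$, chosen so that $t_4 < \min\{t_2,t_3\}$ by taking $t_0$ large enough that the outer and inner radii of $\Omega_{t_1}$ are within $\epsilon$ of $\bar r$. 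Since the window $[t_4,\min\{t_2,t_3\}]$ has uniformly positive length independent of $t_1$, sweeping $t_1$ over $[t_0,\infty)$ covers all large times.

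In your approach, by contrast, the trace $\dot\Psi^{kl}\bar g_{kl}=\alpha F_*^{-\alpha-1}\sum_i\dot F_*^i$ enters the reaction term with a coefficient that does not have a definite sign, so you would genuinely need a two-sided estimate on it along a flow where no pinching is available. But $\sum_i\dot F_*^i$ is precisely the quantity that blows up as one principal curvature degenerates (which is the scenario you are trying to rule out): for a concave, $1$-homogeneous $F_*$ you only get the lower bound $\sum_i\dot F_*^i\ge F_*(1,\dots,1)$, and there is no corresponding upper bound from concavity or Newton--MacLaurin without a curvature ratio estimate. Your remark that ``one must use the concavity of $F_*$, the Newton--MacLaurin inequalities, and the upper bound on $E_k$ to extract the correct asymptotics of this trace'' is exactly where the proof would need to go, and is not carried out; this is the gap. (Your function is structurally the right object for the \emph{upper} bound on $E_k$, as in Theorem~\ref{Ek-ub}, but for the lower bound the paper avoids the trace term entirely by the time-integrated construction above.) As a smaller point, once $E_k$ is bounded above and below one does not immediately get uniform ellipticity; one still needs the separate argument of Proposition~\ref{prop:unifconv} using the evolution of $\tau_{ij}$ to bound the principal curvatures.
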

\proof
The convexity estimate of Lemma \ref{s5:lem1} implies a lower bound on $E_k$ on any finite time interval, so we need only control $E_k$ from below for sufficiently large times.
As shown in Corollary \ref{cor-phi-bd}, $\phi(t)$ is bounded from above and below by positive constants:
\begin{equation*}
  0<\phi_-\leq \phi(t)\leq~\phi_+,
\end{equation*}
where $\phi_-,\phi_+$ are positive uniform constants depending only on $n,k,\alpha$ and $M_0$. Since $\Omega_t$ converges to the ball $B(\bar{r})$ in Hausdorff sense as $t\to\infty$, for any small $\epsilon>0$, there exists a large time $t_0>0$ such that for all $t\geq t_0$, the outer radius $\rho_+(t)$ and inner radius $\rho_-(t)$ of $\Omega_t$ satisfies
\begin{equation}\label{s7:rho-+}
  (1-\epsilon)\bar{r}~\leq~ \rho_-(t)\leq~\rho_+(t)\leq~(1+\epsilon)\bar{r}.
\end{equation}
In the following, we only consider times $t\geq t_0$. Let $t_1>t_0$ and set
\begin{equation*}
  \varphi_{t_1}(t)=\int_{t_1}^t\phi(s)ds.
\end{equation*}
Define $f(z,t)$ by
\begin{equation}\label{s7:f-def}
  f(z,t)~=~-(1+\alpha)(t-t_1)\Psi(z,t)-(1+\alpha)\varphi_{t_1}(t)+s(z,t)
\end{equation}
for $t\geq t_1$. Combining \eqref{flow-gauss} and \eqref{flow-Psi} gives the the evolution equation of $f(z,t)$
\begin{align}\label{s7:dt-f1}
   \frac{\partial}{\partial t}f (z,t)= & ~\dot{\Psi}^{kl}(z,t)\bar{\nabla}_k\bar{\nabla}_lf(z,t)-\alpha\phi(t) \nonumber\\ &\quad+\biggl(s(z,t)-(1+\alpha)(t-t_1)(\Psi+\phi(t))\biggr)\dot{\Psi}^{kl}\bar{g}_{kl}.
\end{align}
We now show that the last term on the right hand side of \eqref{s7:dt-f1} is nonnegative for a short time after time $t_1$. Choose the origin to be the center of the inball of $\Omega_{t_1}$.
Then
\begin{equation*}
s(z,t_1)\geq~s(z_1,t_1) =\rho_-(t_1)\geq~(1-\epsilon)\bar{r},
\end{equation*}
where $z_1$ is the minimum point of $s(z,t_1)$. By Theorem \ref{Ek-ub}, $E_k$ is bounded above by a uniform constant. Therefore $\Psi=-E_k^{\alpha/k}$ is bounded below by a uniform negative constant $-c_+$. Then the evolution equation \eqref{flow-gauss} of $s(z,t)$ gives that
\begin{align*}
  s(z,t)\geq & ~s(z,t_1)+(\phi_--c_+)(t-t_1) \\
   \geq & (1-\epsilon)\bar{r}+(\phi_--c_+)(t-t_1).
\end{align*}
Let $t_2>t_1$ be the time such that
\begin{equation}\label{s7:t2-1}
  t_2-t_1=~\frac{(1-\epsilon)\bar{r}}{c_+-\phi_-+(1+\alpha)\phi_+},
\end{equation}
which is positive and independent of time $t_1$. Then for any $t\in [t_1,t_2]$, we have
\begin{align}\label{s7:s}
& s(z,t)-(1+\alpha)(t-t_1)(\Psi(z,t)+\phi(t)) \nonumber\\
 & \geq  -(1+\alpha)(t-t_1)\phi_++(1-\epsilon)\bar{r}+(\phi_--c_+)(t-t_1)\geq~0.
\end{align}
Since $\dot{\Psi}^{kl}\bar{g}_{kl}\geq 0$, combining \eqref{s7:s} with \eqref{s7:dt-f1} gives that
\begin{align}\label{s7:dt-f2}
   \frac{\partial}{\partial t}f (z,t)
   \geq&~\dot{\Psi}^{kl}(z,t)\bar{\nabla}_k\bar{\nabla}_lf(z,t)-\alpha\phi(t)
\end{align}
for all time $t\in [t_1,t_2]$. The maximum principle implies that
\begin{align}\label{s7:s2}
  f(z,t) \geq & \min_{M_t}f(z,t) \nonumber\\
   \geq & \min_{M_{t_1}}f(z,t_1)-\alpha\varphi_{t_1}(t)\nonumber\\
   =&s(z_1,t_1)-\alpha\varphi_{t_1}(t)
\end{align}
for all time $t\in [t_1,t_2]$. By the definition \eqref{s7:f-def} of $f(z,t)$, \eqref{s7:s2} implies that
 \begin{equation}\label{s7:Psi-lbd1}
   -(1+\alpha)\Psi(z,t)\geq~\frac{\varphi_{t_1}(t)-s(z,t)+s(z_1,t_1)}{t-t_1}
 \end{equation}
for all $t\in (t_1,t_2]$ with $t_2$ defined in \eqref{s7:t2-1}.

To estimate the lower bound of $-\Psi$, it remains to estimate the right hand side of \eqref{s7:Psi-lbd1}. Let $r(t)=\max_{M_t}|X|$. Recall that the origin is chosen to be the center of the inball of $\Omega_{t_1}$, then
\begin{equation}\label{s7:r+}
  r_+:=r(t_1)\leq (1+2\epsilon)\bar{r}.
\end{equation}
From the flow equation \eqref{flow-VMCF}, and noting that the principal curvature $\kappa_i\geq r(t)^{-1}$ at the maximal point of $|X|$, we deduce that
\begin{equation}\label{s7:rt-2}
   \frac d{dt}r(t) \leq~\phi(t)-r(t)^{-\alpha}\leq ~\phi_+,
 \end{equation}
 which implies that
 \begin{equation*}
   r(t)\leq~r_++\phi_+(t-t_1).
 \end{equation*}
 Then
 \begin{equation}\label{s7:rt-3}
   r(t)\leq 2r_+, \quad \forall~t\in [t_1,t_3],
 \end{equation}
where $t_3$ is given by $t_3-t_1= \phi_+^{-1}r_+$. Since
\begin{align*}
  \frac d{dt}\left(\rho_-(t_1)+\varphi_{t_1}(t)-r(t)\right) \geq &~r(t)^{-\alpha}\geq ~2^{-\alpha}r_+^{-\alpha}, \quad \forall~t\in [t_1,t_3],
\end{align*}
we have
\begin{align}\label{s7:rt-4}
 \rho_-(t_1)+\varphi_{t_1}(t)-r(t)\geq &~\rho_-(t_1)-r_++2^{-\alpha}r_+^{-\alpha}(t-t_1)\nonumber\\
 \geq &~ 2^{-1-\alpha}r_+^{-\alpha}(t-t_1)
\end{align}
provided that
\begin{equation*}
  t\geq~t_1+2^{1+\alpha}r_+^{\alpha}(r_+-\rho_-(t_1))~=:t_4.
\end{equation*}
By \eqref{s7:rho-+} and \eqref{s7:r+}, we know that
\begin{equation*}
  r_+-\rho_-(t_1)\leq ~3\epsilon \bar{r}
\end{equation*}
can be arbitrary small by assuming $t_0$ is sufficiently large. Then the waiting time $t_4-t_1$ can be made small such that $t_4<\min\{t_2,t_3\}$ by choosing $t_0$ sufficiently large. Combining \eqref{s7:Psi-lbd1} and \eqref{s7:rt-4} yields that
 \begin{equation}\label{s7:Psi-lbd3}
   -(1+\alpha)\Psi(z,t)\geq~\frac{\rho_-(t_1)+\varphi_{t_1}(t)-r(t)}{t-t_1}\geq~2^{-1-\alpha}r_+^{-\alpha}
 \end{equation}
 for all time $t\in[t_4,\min\{t_2,t_3\}]$. This gives the uniform positive lower bound on $E_k$.
\endproof

The lower speed bound allows us to obtain a uniform lower bound on principal curvatures:

\begin{prop}\label{prop:unifconv}
If $M_0$ is smooth and uniformly convex, then there exists $\varepsilon>0$ such that $\kappa_i(x,t)\geq \varepsilon$ for all $i$ and for all $(x,t)\in M\times[0,\infty)$.
\end{prop}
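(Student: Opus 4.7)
The plan is to apply the parabolic maximum principle to the largest eigenvalue $\eta(z,t):=\lambda_{\max}(\tau(z,t))$ of the inverse second fundamental form, whose evolution is given by \eqref{flow-tau2} in the Gauss-map parametrisation. At a spatial maximum of $\eta$, working in a frame in which $\tau$ is diagonal with $\tau_{nn}=\eta$, the second-derivative test makes the diffusion term $\dot\Psi^{kl}\bar\nabla_k\bar\nabla_l\tau_{nn}$ non-positive; the inverse-concavity of $F=E_k^{1/k}$ (equivalently, concavity of $F_*$) makes both $\alpha F_*^{-\alpha-1}\ddot F_*^{kl,pq}\bar\nabla_n\tau_{kl}\bar\nabla_n\tau_{pq}$ and $-\alpha(\alpha+1)F_*^{-\alpha-2}(\bar\nabla_nF_*)^2$ non-positive; and the Euler identity $\sum_i\dot F_*^i\tau_i=F_*$ for the $1$-homogeneous function $F_*$ yields $\dot F_*^{kl}\bar g_{kl}\geq F_*/\eta$. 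Collecting everything gives the pointwise bound
\begin{equation*}
\frac{d\eta}{dt}\bigg|_{\max}\leq \phi(t)-F_*(\tau)^{-\alpha}=\phi(t)-E_k^{\alpha/k}.
\end{equation*}

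This alone is not enough: at the point where $\eta$ is maximal, $\kappa_{\min}$ is small and $E_k$ can drop by as much as a factor $(n-k)/n$ below its averaged value, so the right-hand side need not be non-positive and the naive comparison yields only linear-in-$t$ growth (which, together with Lemma~\ref{s5:lem1}, merely reproves preservation of convexity on bounded intervals). To promote this to a uniform-in-$t$ bound I would adapt the Smoczyk auxiliary-function argument of Proposition~\ref{s7:prop1}, now for the upper bound on $\tau$. Fix $t_1\geq t_0$ with $t_0$ large enough that Theorem~\ref{s6:thm-hd} places $\Omega_t$ in a thin shell around $B_{\bar r}(p)$ for $t\geq t_0$, and consider on a short window $[t_1,t_1+\tau_0]$ the scalar
\begin{equation*}
h(z,t)=\lambda_{\max}(\tau(z,t))+(1+\alpha)(t-t_1)\Psi(z,t)-\varphi_{t_1}(t),\qquad \varphi_{t_1}(t)=\int_{t_1}^t\phi(s)\,ds.
\end{equation*}
A direct computation combining \eqref{flow-tau2}, \eqref{flow-Psi}, concavity of $F_*$, and the Euler bound on $\dot F_*^{kl}\bar g_{kl}$ gives, at a spatial maximum of $h$,
\begin{equation*}
\frac{\partial h}{\partial t}\leq -2F_*^{-\alpha}-\alpha F_*^{-\alpha-1}\dot F_*^{kl}\bar g_{kl}\bigl[\eta-(1+\alpha)(t-t_1)(\Psi+\phi)\bigr].
\end{equation*}
With $\tau_0>0$ chosen depending only on the uniform bounds $\phi\leq\phi_+$ of Corollary~\ref{cor-phi-bd} and $E_k\leq C_u$ of Theorem~\ref{Ek-ub}, the bracketed factor is non-negative as soon as $\eta\geq(1+\alpha)\tau_0\phi_+$. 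Thus at every spatial maximum of $h$ either $\eta$ is directly bounded by $(1+\alpha)\tau_0\phi_+$ or $\frac{\partial h}{\partial t}\leq 0$, and in the latter case the maximum principle propagates the bound on $\sup_M\eta$ across the window from its value at $t_1$ to $t_1+\tau_0$. Iterating over successive windows and combining with the initial convexity bound from Lemma~\ref{s5:lem1} at $t=t_0$ yields a uniform bound $\sup_{M\times[0,\infty)}\eta\leq C$, and hence the desired conclusion $\kappa_i=\tau_i^{-1}\geq C^{-1}=:\varepsilon>0$.

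The main obstacle will be verifying that the window-by-window iteration does not accumulate unbounded increments on $\sup_M\eta$. This rests on the fact that, by taking $t_0$ large, the Hausdorff shell-width (Theorem~\ref{s6:thm-hd}) can be made arbitrarily small, forcing $|\Psi+\phi|=|\partial_t s|$ to be small in an asymptotic/averaged sense and therefore making the per-window increments summable. A secondary technical point is the non-smoothness of $\lambda_{\max}(\tau)$ at eigenvalue multiplicities, which is handled in the standard way by fixing a top eigenvector $v$ at the spatial maximum and applying the maximum principle to the smooth scalar $\tau_{ij}v^iv^j$ in place of $\lambda_{\max}(\tau)$.
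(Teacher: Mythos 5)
Your opening maximum principle computation is on the right track, but the route you choose afterward diverges from the paper's proof and, more importantly, does not close. The paper's argument here is much shorter: it invokes the uniform lower bound on $E_k$ from Proposition~\ref{s7:prop1}, which has already been established at this point. That lower bound gives a uniform lower bound on $F_*^{-1}$, so the coefficient $\alpha F_*^{-\alpha-1}$ multiplying $\dot F_*^{kl}\bar g_{kl}\,\tau_{ij}$ in \eqref{flow-tau2} is bounded below by a positive constant. Combined with the elementary fact that for a concave, $1$-homogeneous, normalized $F_*$ one has $\dot F_*^{kl}\bar g_{kl}=\sum_i\dot F_*^i\geq F_*(1,\dots,1)=1$ (not merely the weaker Euler bound $\geq F_*/\eta$ you used, which throws away the $\eta$-dependence), the zeroth-order terms at a maximum eigenvector are bounded by $-C_1\eta+C_2$ with $C_1>0$. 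The ODE comparison $\eta'\leq -C_1\eta+C_2$ then gives a uniform bound on $\eta=\tau_{\max}$ directly — no Smoczyk-type auxiliary function is needed.

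Your alternative Smoczyk-for-$\tau$ argument has a genuine gap that you correctly sense at the end: the window-by-window iteration accumulates errors. At the end of each window $[t_1,t_1+\tau_0]$ you only conclude $\max_z\eta(z,t_1+\tau_0)\leq\max\{\max_z\eta(z,t_1),\,(1+\alpha)\tau_0\phi_+\}+C\tau_0$, where the $+C\tau_0$ comes from un-doing the $(1+\alpha)(t-t_1)\Psi-\varphi_{t_1}$ correction at time $t_1+\tau_0$. Iterating gives linear growth, not a uniform bound. The reason the Smoczyk argument works for the lower speed bound in Proposition~\ref{s7:prop1} but not here is structural: there, the bound obtained at the end of each window is in terms of purely geometric quantities ($\rho_-$, $r_+$, $\varphi$) that are controlled uniformly by the Hausdorff convergence, independently of the value of $\Psi$ at $t_1$; whereas your bound on $\eta$ at the end of each window is in terms of $\eta$ at $t_1$, so nothing prevents accumulation. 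Your proposed fix — that the shell-width forces $|\Psi+\phi|$ small in an averaged sense, hence summable increments — is too vague to close this; there is no control asserted on $\Psi+\phi$ pointwise, and the increments enter pointwise. The clean fix is simply to use the result (not the method) of Proposition~\ref{s7:prop1} as the paper does.

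\end{document}
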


\begin{proof}
In the evolution equation \eqref{flow-tau2} for $\tau_{ij}$, all terms are non-positive except for the last one.  The lower bound on $E_k$ implies a lower bound on $F_*^{-1}$, so the last two terms are bounded by $-C_1\tau_{ij}+C_2\bar g_{ij}$.  This is negative if $\tau_{ij}$ is sufficiently large, so any sufficiently large upper bound on the eigenvalues of $\tau$ is preserved.  Equivalently, and sufficiently small lower bound on principal curvatures is preserved by the flow.
\end{proof}

The uniform upper bounds on $E_k$, together with the uniform lower bound on $\kappa_i$, implies a uniform upper bound on principal curvatures by the argument of Theorem \ref{s5:thm-LTE}.   It follows that
 the flow \eqref{flow-VMCF} is uniformly parabolic for all $t>0$.  Then an argument similar to that in the proof of Theorem \ref{s5:thm-LTE} can be applied to show that all derivatives of curvatures are uniformly bounded on $M_t$ for all $t>0$.  Since $M_t$ converges in Hausdorff distance to $S_{\bar r}(p)$, this implies that $M_t$ converges in $C^\infty$ to $S_{\bar r}(p)$.   By an argument similar to that in \cite{And1994-2}, we can further prove that $X(\cdot,t)$ converges exponentially fast to a smooth limiting embedding with image equal to the sphere $S_{\bar r}(p)$ as $t\to\infty$.

\section{Generalizations}\label{sec:8}
In this section, we discuss some generalisations of Theorem \ref{thm1-1}.

\subsection{Nonhomogeneous functions of $E_k$}

Recently Bertini and Sinestrari \cite{Bert-Sin} have considered constrained flows by rather general increasing functions of the mean curvature, preserving either the enclosed volume or the $n$-dimensional area of the evolving hypersurfaces.  Our methods allow a similar result for constrained flows by powers of the elementary symmetric functions $E_k$:

\begin{thm}
Let $\mu:\ {\mathbb R}_+\to{\mathbb R}_+$ be a smooth function with $\mu'(z)>0$ for $z>0$, and with $\lim_{z\to 0}\mu(z)=0$ and $\lim_{z\to\infty}\mu(z)=\infty$.  Suppose further that $\mu(\xi^{-1})$ is a convex function of $\xi$ (equivalently, $\mu''(z)+\frac{2}{z}\mu'(z)\geq 0$ for all $z>0$), and such that $\lim_{x\to\infty}\frac{\mu'(x)x^2}{\mu(x)}=\infty$, while $\frac{z\mu'(z)}{\mu(z)}=O(1)$ as $z\to 0$.  Then the flow
\begin{equation}\label{eq:NHF}
\frac{\partial X}{\partial t}(x,t) = \left(\phi(t)-\mu(E_k^\frac1k(x,t))\right)\nu(x,t)
\end{equation}
with $\phi(t)$ chosen to keep $G(r_{n+1-k}(\Omega_t),r_{n+1}(\Omega_t))$ fixed, has a smooth solution for any smooth, uniformly convex initial embedding $X_0$, which exists for all positive times and converges smoothly as $t\to\infty$ to a limiting embedding  $X_\infty$ with image equal to a sphere $S^n_{\bar r}(p)$ for some $p\in\RR^{n+1}$, where $G(\bar r,\bar r) = G(r_{n+1-k}(\Omega_0),r_{n+1}(\Omega_0))$.
\end{thm}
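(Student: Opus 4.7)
The plan is to follow the proof of Theorem \ref{thm1-1} section by section, replacing $E_k^{\alpha/k}$ by $\mu(E_k^{1/k})$ throughout, and tracking where each of the structural hypotheses on $\mu$ enters. First I would reproduce the monotonicity of Proposition \ref{prop-monot}: the formula \eqref{s1:phi-3} for $\phi(t)$ becomes the obvious analogue with $E_k^{\alpha/k}$ replaced by $\mu(E_k^{1/k})$ and $E_k^{1+\alpha/k}$ by $E_k\,\mu(E_k^{1/k})$, and the Jensen inequality \eqref{s3:Jensen} is replaced by the Chebyshev correlation inequality
$$
\int_{M_t}E_k\,\mu(E_k^{1/k})\,d\mu_t\ \geq\ \frac{1}{|M_t|}\int_{M_t}E_k\,d\mu_t\,\int_{M_t}\mu(E_k^{1/k})\,d\mu_t,
$$
valid because $z\mapsto z$ and $z\mapsto\mu(z^{1/k})$ are both non-decreasing on $(0,\infty)$. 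This yields $\frac{d}{dt}V_{n+1}\geq 0$, $\frac{d}{dt}V_{n+1-k}\leq 0$, and monotone non-increasing $\mathcal{I}_{n+1-k}$, so that Proposition \ref{s3:prop-2} carries over verbatim.

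Next, the Tso-type upper bound of Theorem \ref{Ek-ub} is obtained by running the same argument with $W=\mu(E_k^{1/k})/(u-c)$ in place of $E_k^{\alpha/k}/(u-c)$. The hypothesis $\mu'(z)z^2/\mu(z)\to\infty$ as $z\to\infty$ is precisely what forces the bad $W^2$ term in the evolution of $W$ to be dominated by the good reaction term once $E_k$ is large, giving a uniform upper bound; this is the role played by the homogeneity degree $\alpha$ in Section \ref{sec:ubEk}, and by a similar asymptotic hypothesis in the $k=1$ work of Bertini--Sinestrari \cite{Bert-Sin}. The analogue of Corollary \ref{cor-phi-bd} then gives two-sided bounds on $\phi(t)$. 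For strict convexity and long-time existence I would use \eqref{flow-tau2}: the assumption that $\mu(\xi^{-1})$ is convex in $\xi$ is equivalent to saying that $\mu\circ F_*^{-1}$ is inverse-concave in the principal curvatures, which provides the sign needed to run the tensor maximum principle of Lemma \ref{s5:lem1}.

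For Hausdorff convergence I would mimic Section \ref{sec:Haus}: near-saturation of Chebyshev along a time sequence $t_i\to\infty$ gives
$$
\int_{M_{t_i}}\bigl(\mu(E_k^{1/k})-\mu(\bar E_k^{1/k})\bigr)\bigl(E_k-\bar E_k\bigr)\,d\mu_{t_i}\ \to\ 0,
$$
and the mean value theorem rewrites the integrand as $\nu'(\xi)(E_k-\bar E_k)^2$ with $\nu(z)=\mu(z^{1/k})$ and $\xi$ pointwise between $E_k$ and $\bar E_k$. On the set $\{E_k\geq \bar E_k/2\}$ this controls $(E_k-\bar E_k)^2$ since $\nu'$ is bounded below there (by the upper bound on $E_k$, the lower bound on $\bar E_k$ from Corollary \ref{cor-phi-bd}, and positivity of $\mu'$); on the complementary set, the integrand is bounded below by the positive constant $(\nu(\bar E_k)-\nu(\bar E_k/2))\cdot\bar E_k/2$, forcing the measure of that set to tend to zero, and one recovers $\int_{M_{t_i}}(E_k-\bar E_k)^2\,d\mu_{t_i}\to 0$. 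The curvature-measure identification of Lemma \ref{s6:lem1} via Schneider's Theorem \ref{s2:thm-meas} together with the Alexandrov reflection argument of Theorem \ref{s6:thm-hd} then identifies the Hausdorff limit as a round sphere $S^n_{\bar r}(p)$.

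The main obstacle I expect is the lower speed bound of Proposition \ref{s7:prop1}. Smoczyk's auxiliary function \eqref{s7:f-def} has the constant $(1+\alpha)$ baked in, reflecting the homogeneity degree of $-\Psi=E_k^{\alpha/k}$; in the non-homogeneous setting this constant must be replaced by a quantity involving the effective local degree $z\mu'(z)/\mu(z)$ of $\mu$ at $z=E_k^{1/k}$, and the argument closes exactly when this quantity remains $O(1)$ as $z\to 0$, which is one of our standing hypotheses on $\mu$. Once a uniform positive lower bound on $\mu(E_k^{1/k})$ is secured, the strict monotonicity of $\mu$ gives the corresponding bound on $E_k$, Proposition \ref{prop:unifconv} yields a uniform lower bound on principal curvatures, the flow is uniformly parabolic on $M\times[0,\infty)$, and the bootstrap and exponential convergence argument of Section \ref{sec:Smooth} completes the proof.
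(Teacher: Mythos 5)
Your proposal is correct and mirrors the paper's own treatment in Section \ref{sec:8} step by step: the Chebyshev correlation inequality plays the role of the double-integral identity \eqref{eq:baineq}, the three structural hypotheses on $\mu$ enter exactly where you say they do (the asymptotic condition at infinity for the Tso bound, convexity of $\mu(\xi^{-1})$ for the tensor maximum principle preserving strict convexity, and boundedness of $z\mu'(z)/\mu(z)$ near zero for the Smoczyk-type lower speed bound), and the curvature-measure and Alexandrov reflection arguments carry over unchanged. Your mean-value-theorem splitting to extract $\int(E_k-\bar E_k)^2\to 0$ from near-saturation of Chebyshev is a slightly more detailed rendering of the paper's one-line assertion that $\int|E_k-\bar E_k|\,d\mathcal H^n\to 0$, but it is the same idea.
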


We briefly mention the key steps involved:  The requirement to preserve the constraint $G$ implies that the global term $\phi(t)$ is given by the following expression:
$$
\phi = \frac{G_a\omega_n^{\frac{1}{n+1-k}}V_{n+1-k}^{\frac{n-k}{n+1-k}}\int_M\mu + G_b\omega_n^{\frac{1}{n+1}}V_{n+1}^{\frac{n}{n+1}}\int_ME_k\mu}{G_a\omega_n^{\frac{1}{n+1-k}}V_{n+1-k}^{\frac{n-k}{n+1-k}}V_n + G_b\omega_n^{\frac{1}{n+1}}V_{n+1}^{\frac{n}{n+1}}V_{n-k}}.
$$
As in the homogeneous cases, the enclosed volume is non-decreasing and the mixed volume $V_{n-k}(\Omega_t)$ is non-increasing:  We have
\begin{align*}
\frac{d}{dt}V_{n+1}(\Omega_t) &=\frac{(n+1)G_b\omega_n^{\frac{1}{n+1}}V_{n+1}^{\frac{n}{n+1}}}{G_a\omega_n^{\frac{1}{n+1-k}}V_{n+1-k}^{\frac{n-k}{n+1-k}}V_n + G_b\omega_n^{\frac{1}{n+1}}V_{n+1}^{\frac{n}{n+1}}V_{n-k}}\left(|M_t|\int_{M_t}E_k\mu-\int_{M_t}E_k\,\int_{M_t}\mu\right).
\end{align*}
The bracket on the right is non-negative, since
\begin{align}
|M_t|\int_{M_t}E_k\mu-\int_{M_t}E_k\int_{M_t}\mu &= \int_{M_t}\int_{M_t}E_k(x)\mu(x)-E_k(x)\mu(y)\,d{\mathcal H}^n(x)d{\mathcal H}^n(y)\notag\\
&=\frac12\int_{M_t}\int_{M_t}E_k(x)\mu(x)+E_k(y)\mu(y)\notag\\
&\quad\null\phantom{\frac12\int_{M_t}\int_{M_t}}-E_k(x)\mu(y)-E_k(y)\mu(x)\,d{\mathcal H}^n(x)d{\mathcal H}^n(y)\notag\\
&=\frac12\int_{M_t}\int_{M_t}\left(E_k(x)-E_k(y)\right)\left(\mu(x)-\mu(y)\right)\,d{\mathcal H}^n(x)d{\mathcal H}^n(y)\label{eq:baineq}
\end{align}
which is non-negative since $\mu(E_k^{1/k})$ is an increasing function of $E_k$.   The argument to show that $V_{n+1-k}(\Omega_t)$ is non-increasing is similar, and it follows that the isoperimetric ratio
${\mathcal I}_{n+1-k}$ is strictly decreasing unless the hypersurface is a sphere.  This gives an upper bound on diameter and a lower bound on inradius.  As in \cite{Bert-Sin}, an upper bound on $E_k$ follows by an argument similar to that in Theorem \ref{Ek-ub}, provided that $\lim_{x\to\infty}\frac{\mu'(x)x^2}{\mu(x)}=\infty$.   The argument of Lemma \ref{s5:lem1} to preserve convexity of the evolving hypersurface also applies, provided we assume that $\mu(\xi^{-1})$ is a convex function of $\xi$ for $\xi>0$ (equivalently
$z\mu''(z)+2\mu'(z)\geq 0$ for all $z>0$).  The Hausdorff convergence argument of Lemma \ref{s6:lem1} applies without change since we can find a subsequence of times $t_i$ along which $\frac{d}{dt}{\mathcal I}_{n+1-k}$ approaches zero.  This implies that $|M_{t_i}|\int_{M_{t_i}}E_k\mu - \int_{M_{t_i}}E_k\int_{M_{t_i}}\mu$ approaches zero, which is sufficient to ensure that $\int_{M_{t_i}}|E_k-\bar E_k|\,d{\mathcal H}^n$ approaches zero.  The lower bound on $E_k$ given in Proposition \ref{s7:prop1} holds provided that we assume that $\frac{z\mu'(z)}{\mu(z)}$ is bounded as $z\to 0$, and this also suffices for the uniform lower bound on principal curvatures, following the argument of Proposition \ref{prop:unifconv}.  As before, the flow is then uniformly parabolic and concave in $\tau$, and higher regularity and smooth convergence follow.

\subsection{Volume non-decreasing flows}

We can more generally consider Equation \eqref{eq:NHF} with the global term $\phi(t)$ satisfying
\begin{equation}\label{s8:phi-0}
  \phi(t)\geq \bar\mu:=\frac 1{V_n(\Omega_t)}\int_{M_t}\mu(E_k^{1/k})d\mu_t.
\end{equation}
By \eqref{s3:Vn1}, this means the volume of $\Omega_t$ is monotone non-decreasing along the flow.  We can check that the isoperimetric ratio ${\mathcal I}_{n+1-k}(\Omega_t)$ is non-increasing along the flow:  We have
\begin{align*}
\frac{1}{(n+1)(n+1-k)}\frac{d}{dt}{\mathcal I}_{n+1-k} &= (\phi(t)-\bar\mu)\frac{V_{n+1-k}^{n}}{V_{n+1}^{n+2-k}V_0^k}\left(V_{n+1}V_{n-k}-V_{n+1-k}V_n\right)\\
&\quad\null + \frac{V_{n+1-k}^{n}}{V_{n+1}^{n+1-k}V_0^kV_n}\left(V_{n-k}\int_{M_t}\mu-V_n\int_{M_t}E_k\mu\right).
\end{align*}
The bracket on the first line is non-positive by the Alexandrov-Fenchel inequality, while the second is non-positive by the expression \eqref{eq:baineq}.

In the case where the volume $V_{n+1}$ remains bounded, the analysis is essentially the same as in the previous case.  Otherwise the volume tends to infinity, and the Alexandrov reflection argument implies that $d_{\mathcal H}(\Omega_t,B_{r_{n+1}(\Omega_t)}(p))$ remains bounded, for some fixed $p\in\RR^{n+1}$.  This implies in particular that rescaling about $p$ to fixed enclosed volume gives Hausdorff convergence to a ball.  However it seems likely that further assumptions are required to prove smooth convergence in this case.

\subsection{Anisotropic generalisations}

If $W$ is a smooth, uniformly convex body containing the origin in $\RR^{n+1}$, then there is an associated `relative differential geometry' defined by $W$:  On any smooth (oriented) hypersurface, $\nu_W:\ M\to W$ is the (smooth) map which takes each point $x\in M$ to the point in $W$ with the same oriented tangent plane.  The relative curvature ${\mathcal W}_W$ is the derivative of $\nu_W$, which is a linear map from $T_xM$ to $T_xM$ at each point.  The relative principal curvatures are the eigenvalues of ${\mathcal W}_W$.

In general we can consider flows of the form
$$
\frac{\partial}{\partial t}X(p,t) = (\phi(t)-F({\mathcal W}_W))\nu_W,
$$
where $F$ is a symmetric, monotone function of the relative principal curvatures.

A large part of the analysis we have given for the isotropic flows carries through directly for their anisotropic analogues:  If we replace the mixed volumes $V_j$ by their anisotropic analogues (defined by replacing the ball $B$ by the Wulff shape $W$), then the evolution equations for the mixed volumes are formally unchanged, and in particular choosing $F=\mu(E_k)$, where $\mu$ is an increasing function and $E_k$ is the $k$th elementary symmetric function of the relative principal curvatures, and choosing $\phi(t)$ to preserve a monotone function of $V_{n+1}$ and $V_{n+1-k}$, one can show that the enclosed volume is non-decreasing, while the relative mixed volume $V_{n+1-k}(\Omega_t)$ is non-increasing.

The only point of departure from our analysis for the isotropic case is that there is no known anisotropic analogue of Theorem \ref{s2:thm-meas}.  Thus the analogous result holds for $k=n$ (using the first of the two arguments in the proof of Lemma \ref{s6:lem1}), but the result for $k<n$ cannot so far be deduced.    The full result for anisotropic cases would follow from a natural conjecture concerning the anisotropic curvature measures, which we now define:

First we define a (non-symmetric) distance relative to $W$, by setting
$$
d_W(x,y) = \inf\{r>0:\ y\in x+rW\}.
$$
The relative distance of a point from a set $\Omega$ is then defined by
$$
d_W(\Omega,y) = \inf\{d_W(x,y):\ x\in\Omega\}.
$$
If $\Omega$ is a convex body, then the infimum in the last definition is attained at a single point which we denote by $p_W(\Omega,y)\in\partial\Omega$.  Finally, if $\Omega\subset\RR^{n+1}$ is an open bounded convex body, $\beta$ is an open set in $\RR^{n+1}$, and $\rho>0$, then we define
$$
A_\rho^W(\Omega,\beta) = \{x\in\RR^{n+1}:\ 0<d_W(\Omega,x)<\rho,\ p_W(\Omega,x)\in\beta\}.
$$
The anisotropic curvature measures are then defined by the expansion
$$
{\mathcal H}^{n+1}(A_\rho^W(\Omega,\beta)) = \frac{1}{n+1}\sum_{m=0}^n\rho^{n+1-m}{n+1\choose m}{\mathcal C}_m^W(\Omega,\beta).
$$

\begin{con}
If $\Omega$ is an open bounded convex body in $\RR^{n+1}$ with ${\mathcal C}_m^W(\Omega,.) = c\,{\mathcal C}_n^W(\Omega,.)$ for some $m\in\{0,\cdots,n-1\}$ and $c>0$, then $\Omega=\lambda W+p$ for some $\lambda>0$ and $p\in\RR^{n+1}$.
\end{con}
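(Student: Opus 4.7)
The plan is to reduce the conjecture first to the case of smooth strictly convex $\Omega$, then to promote the measure-theoretic assumption to a pointwise equation on $\partial\Omega$, and finally to conclude by forcing equality in an anisotropic Alexandrov--Fenchel inequality. Suppose to begin with that $\partial\Omega$ is $C^2_+$ with positive relative principal curvatures $\kappa^W=(\kappa_1^W,\dots,\kappa_n^W)$. A Steiner-type expansion of $\mathcal{H}^{n+1}(A_\rho^W(\Omega,\beta))$, directly analogous to \eqref{s2:cur-meas}, should give
\[
\mathcal{C}_m^W(\Omega,\beta)=\int_{\beta\cap\partial\Omega}E_{n-m}(\kappa^W)\,d\mathcal{H}^n_W,
\]
where $d\mathcal{H}^n_W$ is the anisotropic area form associated to $W$. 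Consequently the hypothesis $\mathcal{C}_m^W(\Omega,\cdot)=c\,\mathcal{C}_n^W(\Omega,\cdot)$ reduces to the pointwise identity $E_{n-m}(\kappa^W)\equiv c$ on $\partial\Omega$.

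I would then combine this constancy with an anisotropic Minkowski identity of the shape
\[
\int_{\partial\Omega}E_{j-1}(\kappa^W)\,d\mathcal{H}^n_W=\int_{\partial\Omega}s_W\,E_j(\kappa^W)\,d\mathcal{H}^n_W,\qquad 1\le j\le n,
\]
where $s_W$ is the support function relative to $W$; this follows from integrating the divergence of a Newton-tensor-type tangential vector field along $\partial\Omega$. Taking $j=n-m$ and using $E_{n-m}(\kappa^W)\equiv c$ writes the relative mixed volume $V_{m+1}^W(\Omega)$ as $c$ times $V_m^W(\Omega)$ (up to fixed normalisations); the same identity at $j=n-m-1$ relates $V_m^W(\Omega)$ to $V_{m-1}^W(\Omega)$. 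Inserting these into the anisotropic Alexandrov--Fenchel chain $V_{m+1}^W(\Omega)^2\ge V_m^W(\Omega)\,V_{m+2}^W(\Omega)$ forces equality, whose characterisation should be $\Omega=\lambda W+p$.

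The passage from smooth to general $\Omega$ I would handle by approximation. Taking parallel bodies $\Omega_\varepsilon:=\Omega+\varepsilon W$ produces $C^{1,1}$ strictly convex sets, and a subsequent mollification of their support functions yields smooth, strictly convex $\Omega_{\varepsilon,\delta}$ converging to $\Omega$ in Hausdorff distance as $\varepsilon,\delta\to 0$. Weak continuity of $\mathcal{C}_m^W$ in its convex-body argument (the anisotropic counterpart of the statement used in \S\ref{sec:2-4}) propagates the proportionality $\mathcal{C}_m^W=c\,\mathcal{C}_n^W$ to the approximations in a distributional sense. Combined with the rigidity obtained in the smooth case, this should force $\Omega$ itself to be a homothet of $W$.

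The main obstacle is precisely the equality case of the anisotropic Alexandrov--Fenchel inequality between the relative mixed volumes of $\Omega$ and $W$ when $1<k<n$. In the isotropic setting this is the content of Theorem 8.5.7 of \cite{Schn} and rests on Alexandrov's uniqueness theorem for area measures together with a delicate variational argument on support functions; the corresponding anisotropic statement is, as the paper itself notes, not available in the literature. The most plausible route is to imitate Schneider's proof in the relative setting, running the variational argument on the support function of $\Omega$ relative to that of $W$ and applying a strong maximum principle to the resulting linear elliptic operator, but verifying that every step survives the loss of self-adjointness of the relative Weingarten map ${\mathcal W}_W$ appears to be the essential difficulty and is where the conjecture currently stands.
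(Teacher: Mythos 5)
The statement you have attempted is labelled a \emph{Conjecture} in the paper, and the authors state explicitly that it is precisely what is missing from the literature: they have the isotropic analogue (Theorem \ref{s2:thm-meas}, Schneider's generalisation of Alexandrov's theorem), they handle $k=n$ anisotropically by a separate stability argument, and they formulate the conjecture because without it the anisotropic convergence result for $1<k<n$ cannot be deduced. So there is no proof in the paper to compare against, and your writeup --- which you honestly frame as an outline ending where ``the conjecture currently stands'' --- cannot be read as a proof either. To your credit, you locate the genuine obstruction correctly: an anisotropic rigidity statement for curvature measures, or equivalently the equality case of the relevant anisotropic Alexandrov--Fenchel inequalities, is exactly what is unavailable.

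That said, one concrete step in your outline would fail even if the anisotropic Alexandrov--Fenchel machinery were in hand. The reduction from general $\Omega$ to smooth strictly convex $\Omega$ runs in the wrong direction. Weak continuity of ${\mathcal C}_m^W$ under Hausdorff convergence tells you only that the proportionality passes to the \emph{limit}: if smooth bodies $\Omega_{\varepsilon,\delta}$ all satisfy ${\mathcal C}_m^W=c\,{\mathcal C}_n^W$ and $\Omega_{\varepsilon,\delta}\to\Omega$, then $\Omega$ does too. It gives nothing in the other direction, and there is no reason why $\Omega+\varepsilon W$ or a mollification thereof should have $E_{n-m}(\kappa^W)$ constant merely because $\Omega$ satisfies the measure-theoretic proportionality; generically they will not. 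This is exactly why Schneider's Theorem 8.5.7 is nontrivial even isotropically: the smooth case is Alexandrov's classical theorem, and the entire content of the generalisation is removing smoothness, which is done by a genuinely measure-theoretic variational argument on support functions, not by approximation. A correct proof of the conjecture would need an anisotropic analogue of that machinery, or a new idea replacing it; your outline does not supply one, but it does identify the problem accurately.
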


This conjecture would suffice to prove the analogue of our Theorem \ref{thm1-1} for the anisotropic flows corresponding to any smooth, uniformly convex Wulff shape $W$ containing the origin.

The case $k=1$ can be dealt with by different arguments:  Following the argument in \cite{Bert-Sin}, one can prove a lower bound on the speed $\mu(E_1)$ directly from the maximum principle (under assumptions on $\mu$ identical to those in \cite{Bert-Sin}).  Once the lower speed bound is obtained, the flow is uniformly parabolic and estimates on all higher derivatives can be deduced by standard arguments.  It follows that the solution converges smoothly to a limiting hypersurface which is smooth, uniformly convex, and has constant $E_1({\mathcal W}_W)$ (by the evolution equation for the isoperimetric ratio).  By a rigidity result for equality cases in the Alexandrov-Fenchel inequalities under the assumption of uniform convexity \cite{Schn}*{Theorem 7.6.8} the limit is a scaled translate of the Wulff shape $W$.

\subsection{Flows without divergence structure}

The maximum principle argument of Bertini and Sinestrari \cite{Bert-Sin} to obtain a lower bound on mean curvature under flows by functions of mean curvature cannot be extended to flows involving other elementary symmetric functions $E_k$, since the functions $E_k^{1/k}$ are not uniformly elliptic unless a curvature pinching estimate is known.  However, their argument can be usefully employed for a large class of other flows:

Consider flows of the form
$$
\frac{\partial X}{\partial t} = (\phi(t)-\mu(F({\mathcal W}))\nu,
$$
where $\mu$ has positive derivative, and $F$ is uniformly elliptic (so that $F(A)+\lambda\text{\rm Tr}(B)\leq F(A+B)\leq F(A)+\Lambda\text{\rm Tr}(B)$ for any positive definite matrices $A$ and $B$).  Then the argument of \cite{Bert-Sin} applies to produce a lower speed bound, and then the flow is uniformly parabolic so that the solution has all higher derivatives bounded, provided that $F$ is either concave or inverse-concave so that H\"older continuity of the second derivatives can be deduced.

For these class of flows it is no longer the case that an isoperimetric ratio improves, but we can proceed by considering the Alexandrov reflection argument outlined in the proof of Theorem \ref{s6:thm-hd}.  The higher derivative estimates allow us to produce a limiting solution of the flow as $t\to\infty$, and for this limit the monotone quantities $\lambda_{\pm}(z,t)$ arising from the Alexandrov reflection argument must be constant in time.  A strong maximum principle then implies that the hypersurface must have reflection symmetries in every direction $z$, and so is a sphere.

\begin{bibdiv}
\begin{biblist}

\bib{Aless-Sin}{article}{
   author={Alessandroni, Roberta},
   author={Sinestrari, Carlo},
   title={Evolution of hypersurfaces by powers of the scalar curvature},
   journal={Ann. Sc. Norm. Super. Pisa Cl. Sci. (5)},
   volume={9},
   date={2010},
   number={3},
   pages={541--571},
}

\bib{And1994-2}{article}{
   author={Andrews, Ben},
   title={Contraction of convex hypersurfaces in Euclidean space},
   journal={Calc. Var. Partial Differential Equations},
   volume={2},
   date={1994},
   number={2},
   pages={151--171},
}

\bib{And2001-Aniso}{article}{
   author={Andrews, Ben},
   title={Volume-preserving anisotropic mean curvature flow},
   journal={Indiana Univ. Math. J.},
   volume={50},
   date={2001},
   number={2},
   pages={783--827},
}

\bib{And2004}{article}{
	author={Andrews, Ben},
	title={Fully nonlinear parabolic equations in two space variables},
	eprint={arXiv:math.AP/0402235},
	}


\bib{And-McC2012}{article}{
   author={Andrews, Ben},
   author={McCoy, James},
   title={Convex hypersurfaces with pinched principal curvatures and flow of
   convex hypersurfaces by high powers of curvature},
   journal={Trans. Amer. Math. Soc.},
   volume={364},
   date={2012},
   number={7},
   pages={3427--3447},
}

\bib{Andrews-McCoy-Zheng}{article}{
   author={Andrews, Ben},
   author={McCoy, James},
   author={Zheng, Yu},
   title={Contracting convex hypersurfaces by curvature},
   journal={Calc. Var. Partial Differential Equations},
   volume={47},
   date={2013},
   number={3-4},
   pages={611--665},
}

\bib{Bert-Sin}{article}{
	author={Bertini, Maria Chiara},
	author={Sinestrari, Carlo},
	title={Volume preserving non homogeneous mean curvature flow of convex hypersurfaces},
	eprint={arXiv:1610.07436v1},
	}


\bib{Chow85}{article}{
   author={Chow, Bennett},
   title={Deforming convex hypersurfaces by the $n$th root of the Gaussian
   curvature},
   journal={J. Differential Geom.},
   volume={22},
   date={1985},
   number={1},
   pages={117--138},
}

\bib{Chow-Gul96}{article}{
   author={Chow, Bennett},
   author={Gulliver, Robert},
   title={Aleksandrov reflection and nonlinear evolution equations. I. The
   $n$-sphere and $n$-ball},
   journal={Calc. Var. Partial Differential Equations},
   volume={4},
   date={1996},
   number={3},
   pages={249--264},
}

\bib{Cab-Sin2010}{article}{
   author={Cabezas-Rivas, Esther},
   author={Sinestrari, Carlo},
   title={Volume-preserving flow by powers of the $m$th mean curvature},
   journal={Calc. Var. Partial Differential Equations},
   volume={38},
   date={2010},
   number={3-4},
   pages={441--469},
}

\bib{DiB-F} {article}{
   author={DiBenedetto, Emmanuele},
   author={Friedman, Avner},
   title={H\"older estimates for nonlinear degenerate parabolic systems},
   journal={J. Reine Angew. Math.},
   volume={357},
   date={1985},
   pages={1--22},
}

\bib{huis-1984}{article}{
   author={Huisken, Gerhard},
   title={Flow by mean curvature of convex surfaces into spheres},
   journal={J. Differential Geom.},
   volume={20},
   date={1984},
   number={1},
   pages={237--266},
}

\bib{huis-87}{article}{
   author={Huisken, Gerhard},
   title={The volume preserving mean curvature flow},
   journal={J. Reine Angew. Math.},
   volume={382},
   date={1987},
   pages={35--48},
}



\bib{McC2003}{article}{
   author={McCoy, James A.},
   title={The surface area preserving mean curvature flow},
   journal={Asian J. Math.},
   volume={7},
   date={2003},
   number={1},
   pages={7--30},
}

\bib{McC2004}{article}{
   author={McCoy, James A.},
   title={The mixed volume preserving mean curvature flow},
   journal={Math. Z.},
   volume={246},
   date={2004},
   number={1-2},
   pages={155--166},
}

\bib{McC2005}{article}{
   author={McCoy, James A.},
   title={Mixed volume preserving curvature flows},
   journal={Calc. Var. Partial Differential Equations},
   volume={24},
   date={2005},
   number={2},
   pages={131--154},
}

\bib{McC2017}{article}{
	author={McCoy, James A.},
	title={More mixed volume preserving curvature flows},
	journal={J. Geom. Anal.},
	status={to appear},
	}

\bib{Sin-2015}{article}{
   author={Sinestrari, Carlo},
   title={Convex hypersurfaces evolving by volume preserving curvature
   flows},
   journal={Calc. Var. Partial Differential Equations},
   volume={54},
   date={2015},
   number={2},
   pages={1985--1993},
}

\bib{Schu06}{article}{
   author={Schulze, Felix},
   title={Convexity estimates for flows by powers of the mean curvature},
   journal={Ann. Sc. Norm. Super. Pisa Cl. Sci. (5)},
   volume={5},
   date={2006},
   number={2},
   pages={261--277},
}

\bib{Smo98}{article}{
   author={Smoczyk, Knut},
   title={Starshaped hypersurfaces and the mean curvature flow},
   journal={Manuscripta Math.},
   volume={95},
   date={1998},
   number={2},
   pages={225--236},
}

\bib{Schn79}{article}{
   author={Schneider, Rolf},
   title={Bestimmung konvexer K\"orper durch Kr\"ummungsmasse},
   language={German},
   journal={Comment. Math. Helv.},
   volume={54},
   date={1979},
   number={1},
   pages={42--60},
}

\bib{Schn}{book}{
   author={Schneider, Rolf},
   title={Convex bodies: the Brunn-Minkowski theory},
   series={Encyclopedia of Mathematics and its Applications},
   volume={151},
   edition={Second expanded edition},
   publisher={Cambridge University Press, Cambridge},
   date={2014},
   pages={xxii+736},
}

\bib{Tso85}{article}{
   author={Tso, Kaising},
   title={Deforming a hypersurface by its Gauss-Kronecker curvature},
   journal={Comm. Pure Appl. Math.},
   volume={38},
   date={1985},
   number={6},
   pages={867--882},
}

\end{biblist}
\end{bibdiv}
\end{document}